\newcommand{\re}{{\mathbb R}}
\newcommand{\n}{{\mathbb N}}
\newcommand{\z}{{\mathbb Z}}
\newcommand{\cA}{{\cal{A}}}
\newcommand{\cO}{{\cal{O}}}
\newcommand{\cV}{{\cal{V}}}
\newcommand{\cL}{{\cal{L}}}
\newcommand{\cW}{{\cal{W}}}
\newcommand{\cB}{{\cal{B}}}
\newcommand{\cT}{{\cal{T}}}
\newcommand{\cH}{{\cal{H}}}
\newcommand{\cS}{{\cal{S}}}
\newcommand{\cR}{{\cal{R}}}
\newcommand{\cQ}{{\cal{Q}}}
\newcommand{\cP}{P}
\newcommand{\cM}{{\cal{M}}}
\newcommand{\cU}{{\cal{U}}}
\newcommand{\e}{{\rm e}}
\newcommand{\dt}{\tau}
\newcommand{\dtt}{}
\newcommand{\ds}{}
\newcommand{\eps}{\varepsilon}
\newcommand{\tp}{{\rm T}}
\newcommand{\logm}{\rm log}
\newtheorem{prop}{Proposition}
\newtheorem{cor}{Corollary}
\newtheorem{defi}{Definition}
\begin{document}

\title{Polytope Lyapunov functions for stable and for stabilizable LSS
%\thanks{Thanks to}
}

\author{Nicola Guglielmi\thanks{The research of the first author is supported by Italian INDAM G.N.C.S.} 
\and Linda Laglia
\and
Vladimir Protasov\thanks{The research of the third author is supported by the RFBR grants No 13-01-00642 and No 14-01-00332,
and by the grant of the Dynasty foundation.}}
\institute{Nicola Guglielmi and Linda Laglia \at Department of Pure and Applied Mathematics, \\ University of L'Aquila, Italy \\ \email{guglielm@univaq.it, laglia@univaq.it} \and
Vladimir Protasov \at Department of Mechanics and Mathematics, Moscow State University, Vorobyovy Gory, Moscow, Russia 119992 \\ \email{v-protassov@yandex.ru}}

\date{}

\journalname{%Foundations of Computational Mathematics
}

\maketitle

\begin{abstract}

We present a new approach for constructing polytope Lyapunov functions for continuous-time linear switching systems
(LSS). This allows us to decide the stability of LSS and to compute the Lyapunov exponent with a good precision
in relatively high dimensions. The same technique is also extended for stabilizability of positive systems by evaluating
a polytope concave Lyapunov function (``antinorm'') in the cone. 
The method is based on a suitable discretization of the underlying continuous system and provides both a lower and an upper bound
for the Lyapunov exponent.
The absolute error in the Lyapunov exponent computation
%(the distance between lower and upper bound)
is estimated from above and proved to be  linear in the dwell time.
The practical efficiency of the new method is demonstrated  in several examples and in the list of numerical experiments with randomly
 generated matrices of dimensions up to $10$ (for general linear systems) and up to $100$ (for positive systems).
The development of the method is based on several theoretical results proved in the paper: the existence of monotone invariant 
norms and antinorms for positively irreducible systems, the equivalence of all contractive norms for stable systems and the
linear convergence theorem.

\smallskip

\keywords{Linear switching systems, Lyapunov exponent, polytope,
iterative method, cones, Metzler matrices, joint spectral radius, lower spectral radius}
\smallskip

\begin{flushleft}
\noindent  \textbf{AMS 2010} {\em subject
classification: } 15A60, 15-04, 15A18, 90C90
\end{flushleft}

\end{abstract}

% \printindex 

\section{Introduction}

The stability of linear switching systems (LSS) has been studied in the literature in great
detail. We consider continuous-time LSS which is the following linear system of ODE
on the vector-valued function $x: [0, +\infty) \to \re^d$:
\begin{equation}\label{main}
\left\{
\begin{array}{l}
\dot x (t) \ = \ A(t)\, x(t)\,  ;\\
x(0) \, = \, x_0\, ;\\
A(t) \in \cA \, ,\ t \ge 0\, .
\end{array}
\right.
\end{equation}
Here $A(\cdot )$ is a {\em control function}, also called {\em switching law} which is a summable function that takes values on a given compact set~$\cA$ of $d\times d$ matrices. Since the range of the function
$A(\cdot)$ is compact, the summability of~$A(\cdot)$
is equivalent to its measurability. The set of  control functions on an interval $[a,b]$ will be denoted by $\cU\, [a,b]$.
We use the short notation~$\cU\, [0, +\infty) = \cU$. The space of all summable functions will be denoted as usual by $L_1$.

The {\em Lyapunov exponent} $ \sigma (\cA)$ is the infimum
of numbers $\alpha$ such that $\|x(t)\| \, \le \, C e^{\, \alpha t}$
for every trajectory of~(\ref{main}). The system, or the corresponding family of matrices~$\cA$, is {\em stable} if
 $\|x(t)\| \to 0$ as $t \to +\infty$ for every trajectory of~(\ref{main}).
 Obviously, if $ \sigma < 0$, then the system is stable, and, conversely,
 the stability implies that $ \sigma \le 0$. This small gap between the necessary and sufficient
 condition can be handled: actually the system is stable if and only if $ \sigma < 0$
 (see, for instance, \cite{B,O}).

 It is well known that $\sigma(\cA) = \sigma ({\rm co}\, (\cA))$, where $\, {\rm co}\, (\cdot )$ denotes the convex hull.
 In particular, the family~${\rm co}\, (\cA)$
 is stable if so is the family~$\cA$~\cite{B}. We call a {\em generalized trajectory}
 of system~(\ref{main}) a trajectory $x(\cdot )$ corresponding to a control function $A(\cdot)$
 with values from the convex hull ${\rm co}\, (\cA)$. So, any trajectory is a generalized trajectory as well.
 If $\cA$ is a convex set, then the converse is true, and the notions of trajectory and of generalized trajectory coincide.
Let $I$ denote the $d\times d$ identity matrix and $\cA + s I \, = \, \{A + sI \ | \ A \in \cA\}$, where $s$ is a number. The following equality is checked directly:
\begin{equation}\label{plus}
  \sigma \, (\cA\,  + \, s \, I) \ = \  \sigma \, (\cA) \, + \, s \  .
\end{equation}

This means that, by simple shift of the matrices, the comparison of the Lyapunov exponent with a given number is equivalent  to
the comparison with zero, i.e., to deciding the stability. Hence, the computation of the Lyapunov exponent is
reduced to the stability problem by means of the double division principle. The most popular approach to prove the stability of LSS is by constructing a Lyapunov function~$f(x)$, a positive homogeneous function on~$\re^d$
that decreases along any trajectory of the system. See~\cite{Gurv,L,MP1} for the general theory of Lyapunov functions for LSS.
In most of applications,  the quadratic function $f(x) = \sqrt{x^TMx}$ (CQLF -- {\em common quadratic Lyapunov function})
appears to be quite efficient. Here $M$ is a symmetric positive definite matrix, and the Lyapunov function property
is equivalent to the following system of matrix inequalities: $\, A^TM + MA \, \prec \, 0\, , \  A \in \cA$.
The CQLF can be found by solving the corresponding SDP (semidefinite programming problem), which can be efficiently done
using standard computer software, mostly in dimensions $d \le 20$ or slightly more. The main disadvantage of this approach is that in some 
examples the precision of this method is not satisfactory.
The SDP system may have no solutions even if the system is very stable, say, when  $\sigma(\cA) = -1$. The reason is that quadratic functions are not dense (or, in other terms, not {\em universal}) in the set of all
Lyapunov functions. The examples of stable LSS that have no CQLF are well known. Due to the compactness argument,
each of those systems have an irreducible error in the Lyapunov exponent computation by CQLF. That is why, other classes of functions have been used in the literature to construct Lyapunov functions: positive polynomials of higher degree, SOS, piecewise-quadratic, piecewise-linear, etc. (see surveys~\cite{LA,YS}). In contrast to quadratic functions, all those classes are dense which implies their universality, i.e., every stable LSS has a Lyapunov function from those classes. However,
in many cases, this advantage is rather
  theoretical because the constructing of such Lyapunov functions is hard, even in relatively small dimensions~$d$.

The class of {\em polytope} functions (also referred to as {\em polyhedral} or {\em piecewise-linear}) is the simplest one and it drew much attention in the literature. The first polytope algorithms originated in late eighties with Molchanov and Pyatnitskii~\cite{MP1,MP2} and
Barabanov~\cite{B-poly}.  Then this method was developed in various directions by Amato, Ambrosino, Ariola, Blanchini, Miani, 
Julian, Guivant, Desages, Polanski, Shorten, Yfoulis and others  (see ~\cite{AAA,BM1,BM2,Bob,JGD,Mar,Pol1,Pol2,YS}).  
The polytope function can be easily defined by faces of the corresponding level polyhedron~$P$ (unit ball):
\begin{equation}\label{polyh}
f(x) \ = \ \max_{i = 1, \ldots , N}\, \bigl( v^*_i\, , \,  x\bigr)\ ,
\end{equation}
where $\{v_i^*\}_{i=1}^N$ are given vectors (normals to the hyperfaces of~$P$). The Lyapunov function
property (to decrease along any trajectory) becomes the following property of vertices: for each vertex~$v$
of~$P$ the vectors $Av \, , A \in \cA$, starting at~$v$ are all directed inside the polytope~$P$~\cite{MP2,Pol1}.
For the function~$f$ given in the form~(\ref{polyh}), this condition is hard to verify in high dimensions, because it requires
finding all vertices. On the other hand, if $P$ is given in the dual form, merely by the list of its
vertices~$P = {\rm co}\, \{v_j\}_{j=1}^k$, then this condition is checked  easier, just by solving corresponding LP (linear programming)
problems. In this case, however, the function~$f$ loses its explicit form: 
the evaluation of~$f(x)$ at a given point $x \in \re^d$ requires solving an LP problem.
The main challenge in the design of the polytope Lyapunov functions is to chose the location of vertices in a proper way.
In~\cite{Mar,Pol1,Y,YS} this is done by placing all vertices on a given system of ray directions. First, one construct a system of rays that  actually form an $\eps$-net on the unit sphere in~$\re^d$.
Then one selects a vertex in each ray in order to fulfill the Lyapunov property of the polytope~$P$.
 In~\cite{Mar,Pol1} this is done by solving LP problems. In~\cite{Y,YS} the authors introduce iterative ray-gridding approach
  and demonstrate its efficiency in examples of dimensions $d=2$ and $d=3$. Unfortunately, in higher dimensions the number of vertices grows dramatically, which makes those methods hardly applicable. An  $\eps$-net  on the unit sphere contains,  roughly, $\, C \, \eps^{\, 1 - d}$ points~(see, for instance~\cite{Chen,LSW}). Therefore, already in~$\re^4$,  asking for a precision $\eps = 0.01$ to approximate the Lyapunov exponent requires to deal with millions of vertices.
% \smallskip

In this paper, we develop a new method to design the polytope Lyapunov function. We use some ideas from our recent work~\cite{GP}
as well as \cite{GWZ,P0,GZ1,GZ2}, where we analyse discrete-time LSS. For them, the stability  depends on the joint spectral radius of matrices, see \S 2.2 for a brief overview. In~\cite{GWZ,P0,GP} we developed  a method of computation of the joint spectral radius. For the vast majority of finite sets of matrices, it finds the exact value. The method works efficiently for general finite sets of matrices of dimensions up to~$20$ although it can reach higher dimension if one accepts a longer computation. 
For sets of nonnegative matrices, it works much faster and it is applicable for dimensions up to several hundreds.
 The main idea is to construct iteratively a common polytope Lyapunov norm, see \S 2.3 for more details. Here we use this argument  to analyze the continuous-time LSS. First, we discretize the system with a properly chosen dwell time~$\tau > 0$. Then we apply the algorithm from~\cite{GP} to the matrices $e^{\, \tau \cA} = \bigl\{e^{\, \tau A} \ \bigl| \ A \in \cA  \bigr\}$ and construct a corresponding
polytope~$P$. Then we use the piecewise linear norm generated by this polytope as a Lyapunov function for the continuous-time LSS obtaining
both a lower bound and an upper bound for the Lyapunov exponent.

The new method can be shortly summarized as follows:

1) the vertices of the polytope~$P$ are generated iteratively. The starting vertices are chosen in a special way: they are  leading eigenvectors of a chosen product~$\Pi$ of matrices from $e^{\, \tau \, \cA}$,
and of cyclic permutations of this product;

2)  the iterations are realized not by the matrices of the family $\cA$ but by shifted matrices
$A + s I$ (formula~(\ref{plus})), where the parameter $s$ is chosen by solving an optimization problem.
\smallskip

As a result,  we obtain a polytope~$P$ that defines a Lyapunov function and
localize the Lyapunov exponent~$\sigma$ in a segment $[\beta  ,\alpha ]$. The precision of this method is
estimated. In particular, we prove that the length of this segment~$\gamma = \alpha - \beta$
decreases linearly with $\tau$ and converges to zero as $\tau \rightarrow 0$. Then we consider numerical examples in several dimensions.
In dimension $d = 5$ (Example \ref{ex:2}), to compute the Lyapunov exponent with an absolute error $\gamma \le 0.25$ we need $\tau$ 
about $0.025$ and a polytope with $10000$ vertices; to compute it with an absolute error $\gamma \le 0.1$ we need $\tau$ about $0.01$ and
a polytope with $20000$ vertices.

In a separate section we consider positive systems, i.e., systems with all trajectories inside a
given cone $K \subset \re^d$. Such systems have been intensively studied in the literature~\cite{AR,FM,FMC,FV1,GSM,SH}.
We start with several theoretical results, the main of which is the theorem on the existence of
monotone invariant norm for a positively irreducible system (Theorem~\ref{th10}). Then we modify the polytope algorithm for
positive systems and estimate its accuracy. The algorithm is written for finitely many matrices in the case $K = \re^d_+$ (i.e., all matrices are Metzler). In numerical examples (Section 5), we see that it works much faster than in the general case in dimensions up to~$100$. 
In dimension $d = 25$ (see Example \ref{ex:6}), to compute the Lyapunov exponent with an absolute error $\gamma \le 0.02$ we need 
$\tau$ about $0.02$ and a polytope with only $210$ vertices.
In dimension $d = 100$ (see Section \ref{sec:stat}), to compute the Lyapunov exponent with an absolute error $\gamma \le 0.1$ we need 
$\tau$ about $0.004$ and a polytope with less than $300$ vertices.

All numerical results are presented in Section~5 and  compared with the CQLF method. In small dimensions (up to~$10$)
our algorithm is more expensive than CQLF, but gives better accuracy. For positive systems, its complexity
grows moderately with the dimension, and for dimensions up to $d = 100$ it still gives good results
(with absolute error about $\gamma = 0.1 - 0.3$), while the CQLF method becomes inapplicable.

The last part of the paper deals with stabilizability of positive systems. The system is called stabilizable if
there is at least one switching low with stable trajectories. The largest possible exponent of growth is
called lower Lyapunov exponent of the system and is denoted as~$\check \sigma (\cA)$, see Section~4 for more details. Stabilizability of positive systems was studied in~\cite{BS,FV1,FV2,LA,SDP,XW}.   An advantage of our method is that it is easily extended
to the stabilizability problem and to computing the lower Lyapunov exponent. To this end we have to consider
concave Lyapunov functions (``antinorms'') instead of convex ones and the so-called ``infinite polytopes'' instead of usual ones. We begin with theoretical results and show the existence of invariant antinorm  for an arbitrary system with positively irreducible matrices (Theorem~\ref{th20}). This allows us to estimate the accuracy of the polytope method for computing the
lower Lyapunov exponent, which also turns out to be linear in the dwell time~$\tau$. In numerical examples presented in Section~5 the algorithm decides stabilizability and computes the lower Lyapunov exponent in dimensions up to~$100$.
% \smallskip

The paper is organized as follows. In Section~2 we start with a short summary of results on the joint spectral radius, extremal and invariant norms, and bounds for the Lyapunov exponent. Then we prove the main theoretical result of that section, Theorem~\ref{th8} on the linear upper bound for the precision of the Lyapunov exponent computation. Afterwards we present Algorithm~(R) for computing the Lyapunov exponent
for general finite sets of matrices, estimate its convergence rate and prove the conditions to terminate within finite time  
(Theorem~\ref{th40}).  Section~3 deals with LSS that are positive with respect to a given cone~$K \subset \re^d_+$. We start with the main theoretical result, Theorem~\ref{th10} on monotone invariant norm in the cone. We use it to prove Theorem~\ref{th8-} providing an upper bound for the Lyapunov exponent. Then we describe Algorithm~(P) for computing the Lyapunov exponent of a positive system. Section~4 is concerned
with the stabilizability of positive systems and starts with introducing notions of antinorm and of infinite polytope. Then we present Theorem~\ref{th20} on the existence of a monotone invariant antinorm in a cone.
Finally we derive lower and upper bounds for the lower Lyapunov exponent $\check \sigma (A)$, estimate their distance 
(Theorem~\ref{th8+}) and present Algorithm~(L) for deciding the stabilizability and approximating $\sigma (\cA)$.
The criterion of convergence of Algorithm~(L) and estimates of its precision are proved in Theorem~\ref{th40+}.
In Section~5 we present numerical examples and some statistics of the implementation of our algorithms
to randomly generated matrices of various dimensions.
% \smallskip

Throughout the paper, unless we explicitly state differently, a norm of a vector and of a matrix is Euclidean.
For a matrix $A$ and for a set of matrices $\cA$, we denote $e^{\, A} = \sum_{k=0}^{\infty}\frac{1}{k!}A^k\, , \
e^{\, \cA} = \{e^{\, A} | \ A \in \cA\}\, , \ \cA^k \, = \, \{A_k\ldots A_1 \ | \ A_i \in \cA\, , \, i = 1, \ldots , k\}$.

\section{Stability of general linear switching systems}

In this section we provide some general theoretical results concerning stability of switching systems.

\subsection{Extremal and invariant norms}

The main approach to establish the stability of LSS is to compute a Lyapunov function~$f(x)$,
a positive homogeneous continuous function on~$\re^d$ such that, for every trajectory~$x(\cdot)$ of the system,
 the function $f(x(t))$ strictly  decreases in $t$.
Such a function is usually called (joint) Lyapunov function of the family~$\cA$.
The existence of  Lyapunov function implies the stability. A converse statement
is also true, even in the following strong sense: for an arbitrary stable LSS there exists
a {\em convex} symmetric Lyapunov function~\cite{O,MP1}. The symmetry means that~$f(-x) = f(x), \, x \in \re^d$.
 Since a symmetric convex positively homogeneous function on~$\re^d$ is a norm, one can say that
 there is a {\em Lyapunov norm}, i.e., a norm that possesses property of Lyapunov function. If the family~$\cA$ generating
 the LSS is irreducible, this result can be  strengthened to the existence of extremal and invariant (Barabanov) norms.

\begin{defi}\label{d5}
A norm $\|\cdot\|$ is called extremal for a set $\cA$ if
for every trajectory of~(\ref{main})
we have $\|x(t)\| \, \le \, e^{\,  \sigma \, t}\|x(0)\|\, , \ t \ge 0$.

An extremal norm is called  invariant if for every $x_0 \in \re^d$ there exists a generalized trajectory
$\bar x (t)$ such that $\bar x (0) = x_0$ and  $\|x(t)\| \, = \, e^{\,  \sigma \, t}\, \|x_0\|\, , \ t \ge 0$.
\end{defi}

Since every point $x(\tau)$ can be considered as a starting point of a new trajectory
(after the shift of the argument $t' = t - \tau$), it follows that {\em  for an extremal norm
the function $e^{-\,  \sigma \, t}\, \|x(t) \|$ is non-increasing in $t$ on every trajectory. For an invariant norm,
this function is identically constant on some generalized trajectory, and for every point $x_0 \in \re^d$ there is
such a trajectory starting in it}. In particular, for $ \sigma = 0$ we have
\begin{cor}\label{c20}
In case~$ \sigma (\cA) \, = \, 0 \, $ a norm is extremal for $\cA$ if and only if it is non-increasing in~$t$ on every trajectory
of~(\ref{main}). An extremal norm is invariant if and only if for every $x_0 \in \re^d$ there exists a generalized trajectory
$\bar x (t)$ with $\bar x (0) = x_0$ on which this norm is identically constant.
\end{cor}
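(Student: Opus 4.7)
The plan is to derive the corollary as a direct specialization of Definition~\ref{d5} to the case $\sigma(\cA) = 0$, combined with the time-shift principle for trajectories noted in the italicized remark immediately preceding the statement.

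For the first equivalence, with $\sigma = 0$ the extremality condition reads $\|x(t)\| \le \|x(0)\|$ for every trajectory and every $t \ge 0$. To get the forward direction, I would fix an arbitrary trajectory $x(\cdot)$ of~(\ref{main}) and arbitrary $0 \le s \le t$. The key observation is that the function $y(u) = x(u + s)$ is again a trajectory of~(\ref{main}), corresponding to the shifted switching law $A(\cdot + s) \in \cU$, with initial condition $y(0) = x(s)$. Applying the extremality inequality to $y$ at time $u = t - s$ yields $\|x(t)\| = \|y(t-s)\| \le \|y(0)\| = \|x(s)\|$, which is precisely the non-increasing property. The converse direction is immediate: taking $s = 0$ in the monotonicity statement recovers the defining inequality of an extremal norm.

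For the second equivalence, one simply substitutes $\sigma = 0$ into the invariance condition of Definition~\ref{d5}: the factor $e^{\sigma t}$ reduces to $1$, so the requirement $\|\bar x(t)\| = e^{\sigma t}\|x_0\|$ becomes $\|\bar x(t)\| = \|x_0\|$, i.e.\ that $\|\bar x(t)\|$ is identically constant on the generalized trajectory. Both directions are then tautological, since extremality has already been assumed.

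I do not expect any serious obstacle: the statement is a notational unpacking of Definition~\ref{d5} at $\sigma = 0$. The only point worth being explicit about is the shift-invariance used to upgrade the one-sided bound $\|x(t)\| \le \|x(0)\|$ to true monotonicity, which relies on the fact that $\cU = \cU[0,+\infty)$ is closed under time translations of the switching law---a property that holds because $\cU$ is defined simply as the set of measurable (equivalently summable) $\cA$-valued functions on $[0,+\infty)$.
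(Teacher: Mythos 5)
Your argument is correct and is essentially the paper's own: the paper derives this corollary from the remark immediately preceding it, namely that every point $x(\tau)$ can be taken as the starting point of a new trajectory via the shift $t' = t - \tau$, which is exactly the time-translation step you make explicit. The second equivalence is, as you say, a direct substitution of $\sigma = 0$ into Definition~\ref{d5}.
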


If we take a unit ball~$B$ of that norm, we see that a norm is extremal if and only if
every trajectory starting on the unit sphere $\partial \,B$ never leaves the ball~$B$.
This norm is invariant if for each point of the sphere there exists a generalized trajectory starting at this point
 that eternally  goes on the sphere.

 A set of matrices $\cA$ is called {\em irreducible} if these matrices  do not share a nontrivial common invariant subspace.
 The following theorem originated with  N.Barabanov in~\cite{B}.

% \smallskip

\noindent 
\textbf{Theorem A}. {\em An irreducible set of matrices  possesses an invariant norm.}

\smallskip

The proof is in~\cite{B}.
Clearly, if a family~$\cA$ is stable, i.e., $\sigma = \sigma (\cA) < 0$, then an extremal norm
of the family~$\cA - \sigma I$ is a Lyapunov norm for~$\cA$. Lyapunov norms can be characterized
geometrically in terms of the vector field on the unit sphere.
For each $A \in \cA$, we consider the following vector field on~$\re^d$: to every point $x \in \re^d$
we associate a vector $Ax$ starting at~$x$. For a given convex set $G$, we say that
the vector $Ax$  at the point $x \in G$ {\em is directed inside} $G$, if
there is a number $\eta > 0$ such that $x \, + \, \eta \, Ax \, \in \, {\rm int}\, G$.
It was shown in~\cite{O,MP1} that a norm~$f(\cdot)$ with a unit ball~$G \subset \re^d$ is Lyapunov
for a given LSS if and only if, at every point $x \in \partial \, G$ the vector $Ax$
is directed inside~$G$, for each $A \in \cA$. The following result is  corollary of Theorem~A, but it was
derived much earlier, in works~\cite{MP1,O}:
% \smallskip

\noindent 

\textbf{Theorem B}. 
{\em A family of matrices~$\cA$ is stable if and only if
there exists a convex body~${G \subset \re^d}$ symmetric about the origin such that
at every point~$x \in \partial \, G$, the vector $Ax$ is directed inside~$G, \ A \in \cA$.}

\smallskip

\subsection{Discretization and the joint spectral radius}

The idea is that of discretizing (\ref{main}) and imposing that the switching instants are
multiple of a dwell time $\tau$. This allows us to express the solution of the discretized system 
as a product of matrices $\{ B=e^{\tau A}\}$, $A \in \cA$.

A discrete linear switching system is the following system of difference equations on
a sequence $\{x_k\}_{k =0}^{\infty} \subset \re^d$:
\begin{equation}\label{main-discr}
\left\{
\begin{array}{l}
 x_{k+1}  \ = \ B_{k+1} \, x_k\,  ;\\
x_0 \ \in \ \re^d \quad \mbox{is given}\, ;\\
B_k \in \cB \, ,\ k \in \n \cup \{0\} \, ,
\end{array}
\right.
\end{equation}
where $\cB$ is a given compact set of matrices. For an arbitrary sequence $B_k \in \cB\, , \ k = 1, 2, \ldots $
and an initial point $x_0$, a unique solution $\{x_k\}_{k=0}^{\infty}$ is called trajectory of the system. The system is
{\em stable} if $x_k \to 0, \, k \to \infty$, for every trajectory. The role of Lyapunov exponent for discrete systems
is played by the joint spectral radius (JSR) of the set~$\cB$ (see e.g the monograph \cite{J} for an extensive treatise on the JSR).
\begin{defi}\label{d15}
For a given compact set of matrices~$\cB$, the joint spectral radius~$\rho(\cB)$ is
$$
\rho(\cB)\ = \ \lim_{k \to \infty}\max_{B_i \in \cB\, , \, i = 1,\ldots , k}\, \bigl\|B_k\ldots B_1\bigr\|^{\, 1/k}\, .
$$
\end{defi}
This limit exists for every compact set o matrices~$\cB$ and does not depend on the matrix norm~\cite{RS}.
For properties and for more references on numerous applications of JSR see~\cite{GP,GZ3}. The discrete system
is stable if and only if $\rho(\cB) < 1$~\cite{B-discr}. One of the ways to analyse stability
is to discretize the continuous system~(\ref{main}) with dwell time~$\tau > 0$ to the form~(\ref{main-discr})
by setting~$x_k = x(k\tau )\, , \, B = e^{\, \tau A}\, \, A \in \cA$.
This system represents only those trajectories of the continuous system corresponding
to piecewise-constant control functions $A(\cdot)$ with the step size~$\tau$.
Hence, if there exists $\tau > 0$
for which the discrete system is
unstable, i.e., $\rho(e^{\, \tau \cA}) \ge 1$, then the continuous system is also unstable.
We need several properties of JSR that are formulated below. The first one was established in~\cite{RS}:
\smallskip

\noindent 
\textbf{Proposition C}. 
{\em Let~$\cB$ be a compact matrix family and $\lambda$ be a positive number.
If there exists a symmetric convex body~$G$ such that $B (G) \subset \lambda \, G\, ,\ B \in \cB$, 
then $\rho(\cB) \le \lambda$. If $\rho(\cB) < \lambda$, then such a convex body exists.   }

% \smallskip

The following property of the joint spectral radius is a special case of~\cite[Proposition 2]{P4}:
% \smallskip

\noindent 
\textbf{Proposition D}. 
{\em For every compact set of matrices~$\cB$ and for every point $x_0\in \re^d$
 that does not belong to their proper common invariant linear subspace, there is a constant $C = C(x_0\, \cB) > 0$
 such that $\, \max\limits_{B_i \in \cB\, , \, i = 1, \ldots , k}\, \|B_k\, \cdots \, B_1 \, x_0\| \, \ge \,
 C\, \rho^k\, \, k \in \n$, where $\rho = \rho (\cB)$. }

% \smallskip

\subsection{The polytope norm method for discrete-time systems}

Our method of computing of the Lyapunov exponent with a polytope norm is based on
the corresponding method for discrete-time systems developed in~\cite{P0,GWZ,GP}. Below, we give a  short summary of
some results of that work needed in the subsequent sections.

The main idea of the method of JSR computation with a polytope norm is to
find the {\em spectrum maximizing product} (s.m.p.), i.e., a product~$\Pi$ of matrices from~$\cB$ for which the value
$[\rho(\Pi)]^{\, 1/n}$ is maximal among all products of matrices from~$\cB$, where $n = n(\Pi)$ is the length of~$\Pi$.
This is done as follows: first we fix some reasonably large $l \in \n$ and check all products of lengths~$n \le l$ finding
a product~$\Pi$ with the maximal value~$[\rho(\Pi)]^{\, 1/n}$. We denote this value by~$\rho_l$. This product is considered as a candidate for s.m.p. Then
the algorithm iteratively builds a polytope~$P$ for which $\, B\, P \, \subset \, \rho_l P\, , \ B \in \cB$. If it terminates within
finitely many iterations, then the polytope~$P$ is extremal, $\Pi$ is an s.m.p., and $\rho(\cB) = \rho_l$.
A theoretic criterion for termination of the algorithm within finite time is formulated in terms of dominant products.
We consider the normalized family $\tilde \cB \, = \, \{\tilde B = \rho_l^{-1}B | \  B \in \cB\}$. By $\tilde \Pi$
we denote the product of matrices from~$\tilde \cB$ corresponding to~$\Pi$.
\begin{defi}\label{d16}
A product $\Pi\in \cB^n$ is called dominant for the family
$\cB$  if there is $q< 1$ such that the spectral radius of every
product of operators of the normalized family $\tilde \cB$, that is
not a power of~$\widetilde \Pi$ nor a power of its cyclic
permutations, is smaller than $\, q$.
\end{defi}
Thus, any dominant product is an s.m.p., but, in general, not vice versa.
As is shown in~\cite[theorem 4]{GP} the algorithm terminates within finite time if and only if
the product~$\Pi$ is dominant for~$\cB$.

\subsection{Lower and upper bounds for the Lyapunov exponent}

Let $\cA$ be a compact family of matrices. For a given number $\tau > 0$, we define the value $\beta (\cA , \tau ) = \beta (\tau) = \tau^{-1}\ln \rho (e^{\tau \cA})$, and for a given polytope~$P \subset \re^d$ symmetric about the origin,  we define the value $\alpha (\cA , P) \, = \, \alpha (P)$ as
\begin{eqnarray}
\alpha (P) & = & \inf \ \Bigl\{ \alpha \in \re \ \Bigl| \
\mbox{for each vertex}\ v \in P \ \mbox{and} \ A \in \cA, \Bigr.
\nonumber
\\
& & \Bigl. \qquad \mbox{the vector} \ (A - \alpha I)v \  \mbox{is directed inside} \, P\, \Bigr\}\, .
\label{def-alpha}
\end{eqnarray}
The following observation is simple, but crucial for the further results:
\begin{prop}\label{p50}
For an arbitrary compact family~$\cA$, for each number $\tau > 0$ and for a polytope~$P$, we have
\begin{equation}\label{lu}
\beta (\tau) \ \le \  \sigma \ \le \ \alpha (P)\, .
\end{equation}
\end{prop}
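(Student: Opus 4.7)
The plan is to verify the two inequalities independently, since they rest on different ingredients.

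\textbf{Lower bound $\beta(\tau)\le\sigma$.} The key observation is that every finite product $B_k\cdots B_1$ with $B_i = e^{\,\tau A_i}$ and $A_i\in\cA$ is precisely the transition operator of~(\ref{main}) over the interval $[0,k\tau]$ corresponding to the piecewise-constant control $A(t)=A_i$ on $[(i-1)\tau,i\tau)$; such controls are summable, hence admissible. First I would fix an arbitrary $\alpha>\sigma$: by the definition of the Lyapunov exponent there exists $C>0$ such that $\|x(t)\|\le C e^{\,\alpha t}\|x(0)\|$ for every trajectory of~(\ref{main}), uniformly in $x(0)$ on the unit sphere (the compactness of $\cA$ makes the uniform bound automatic). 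Specialising to piecewise-constant controls with step $\tau$, this yields $\|B_k\cdots B_1\|\le C e^{\,\alpha k\tau}$ for every sequence $B_i\in e^{\,\tau\cA}$. Taking $k$-th roots and invoking Definition~\ref{d15} gives $\rho(e^{\,\tau\cA})\le e^{\,\alpha\tau}$, i.e.\ $\beta(\tau)\le\alpha$. Letting $\alpha$ decrease to $\sigma$ concludes this half.

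\textbf{Upper bound $\sigma\le\alpha(P)$.} Fix any $\alpha>\alpha(P)$. By the very definition of $\alpha(P)$, at every vertex $v$ of $P$ and for every $A\in\cA$ the vector $(A-\alpha I)v$ is directed inside $P$. As recalled earlier in the excerpt (immediately after~(\ref{polyh})), for a polytope this vertex-wise condition is equivalent to the same property at every boundary point: for each $x\in\partial P$ and each $A\in\cA$, the vector $(A-\alpha I)x$ is directed inside $P$. Theorem~B, applied to the shifted family $\cA-\alpha I$ with the symmetric convex body $G=P$, then yields stability of $\cA-\alpha I$, so $\sigma(\cA-\alpha I)<0$. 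The shift identity~(\ref{plus}) rewrites this as $\sigma(\cA)<\alpha$. Letting $\alpha\downarrow\alpha(P)$ delivers $\sigma(\cA)\le\alpha(P)$.

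\textbf{Anticipated difficulty.} Both halves are essentially an assembly of tools that are already in place: Theorem~B, the shift identity~(\ref{plus}), and Definition~\ref{d15}. The only mildly delicate point is the passage from the vertex-wise inward condition to the pointwise one on all of $\partial P$. This is a convexity argument that is uniform because $P$ has only finitely many vertices and $\cA$ is compact, so the strict inclusion $x+\eta(A-\alpha I)x\in {\rm int}\,P$ can be realized with an $\eta>0$ chosen independently of $v$ and $A$; I do not anticipate further obstruction.
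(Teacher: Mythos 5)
Your proof is correct, and the two halves deserve separate comments. The upper-bound half is essentially the paper's own argument: the vertex-wise inward condition propagates to all of $\partial P$ by convexity (with a uniform $\eta$ obtained from the finiteness of the vertex set and the compactness of $\cA$), and then the geometric stability criterion (Theorem~B) applied to $\cA-\alpha I$ together with~(\ref{plus}) gives $\sigma(\cA)<\alpha$; the paper concludes by taking the infimum over all $\alpha$ satisfying the vertex condition, so it never needs your additional observation that the set of such $\alpha$ is an up-set — your version does need it, but it holds, since $-v$ itself points inward at every vertex of a symmetric full-dimensional polytope. The lower-bound half is where you genuinely diverge. The paper fixes a vector $x_0$ outside every common invariant subspace and invokes Proposition~D to obtain the lower bound $\max\|e^{\tau A_k}\cdots e^{\tau A_1}x_0\|\ge C_1\,[\rho(e^{\tau\cA})]^k$, which it then plays off against the trajectory estimate $\|x(k\tau)\|\le C_2 e^{(\sigma+\eps)k\tau}$. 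You instead pass directly to operator norms, $\|B_k\cdots B_1\|\le C e^{\alpha k\tau}$ for any $\alpha>\sigma$, and feed this into Definition~\ref{d15} of the joint spectral radius; this bypasses Proposition~D entirely and is shorter. The only thing your route leans on is that the constant in the trajectory estimate is uniform over controls and over initial points on the unit sphere — which is the standard reading of the definition of $\sigma$ and is exactly the same uniformity the paper's own constant $C_2$ tacitly assumes — so there is no gap.
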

 \begin{proof} 
Take arbitrary $\eps > 0$ and a vector $x_0 \in \re^d$ that does not belong to
 a common invariant subspace of~$\cA$. Consider the set of trajectories with~$x(0) = x_0$ corresponding to
 piecewise-constant  control functions~$A(t)$
 with~$k$ steps of size~$\tau$.
By Proposition~D, the maximal value of~$\|x(k \tau )\|$ over such trajectories is
 \begin{equation}\label{twoparts}
 \max\limits_{A_{d_1}, \ldots , A_{d_k} \in \cA}\|e^{\tau A_k}\cdots e^{\tau A_1}x_0\| \ \ge \
 C_1 \, [\rho(e^{\tau \cA})]^k,
 \end{equation}
  where the constant~$C_1$ does not depend on~$k$.
  On the other hand,
$\|x(k \tau) \| \, \le \, C_2 e^{(\sigma + \eps)k \tau}$.
Combining this with~(\ref{twoparts})
and taking the limit as~$k \to \infty$, we get $\, e^{(\sigma + \eps)\tau} \, \ge \,
\rho(e^{\tau \cA})$, which for $\eps \to 0$ yields $\sigma \ge \frac{\ln \, \rho(e^{\tau \cA})}{\tau}= \beta(\tau)$.
Take now some~$\alpha \in \re$. If for every $A \in \cA$ and for each vertex~$v \in P$,
the vector $(A -\alpha I)v$ is directed inside~$P$,
then there is $\eta > 0$ such that $v  + \eta (A -\alpha I)v \in {\rm int} \, P\, , \ A \in \cA$, for each vertex~$v \in P$.
 Hence, the same is true for every convex combination~$x$ of vertices:
 $x  + \eta (A -\alpha I)x \in {\rm int} \, P$, and hence, for every~$x \in \partial \, P$,
 the vector $(A -\alpha I)x$ is directed inside~$P$.
 Consequently, $\sigma (\cA - \alpha I) < 0$, and so $\sigma (\cA) < \alpha$. Taking infimum over all~$\alpha$,
 we arrive at the right hand side inequality of~(\ref{lu}). \qed
\end{proof}

If, for a polytope~$P$, we have $e^{\, \tau \, A}\, P \, \subset \, \lambda \, P\, , \ A \in \cA$,
then Proposition~C yields that $\lambda \ge \rho(e^{\tau \cA})$.
If this inclusion  holds true for $\lambda = \rho(e^{\tau \cA})$,
then the polytope~$P$ is called {\em extremal} for the family~$e^{\tau \cA}$. Clearly, if we have an extremal polytope available, then we know the value of JSR. This is the base of the algorithm exact JSR computation from~\cite{GWZ,GP}.
 In many cases, however, the extremality
property is a too strong requirement, and one can use the following weaker version:
\begin{defi}\label{d17}
Given $\eps \ge 0$,
a polytope~$P$ is called {\em $\eps$-extremal} for a family $e^{\tau \cA}$ if
$$
 e^{\tau A} \, P \ \subset \ e^{\tau \eps } \, \rho (e^{\tau \cA})\, P\, , \qquad  A \in \cA\, .
$$
\end{defi}
Since $\rho (e^{\tau \cA}) \, = \, e^{\tau \beta (\tau)}$, we see that
the  $\eps$-extremalily is equivalent to the inclusion
\begin{equation}\label{varep}
 e^{\tau A} \, P \ \subset \ e^{\tau (\beta + \eps )} \, P\ , \qquad  A \in \cA\, .
\end{equation}

Before we formulate the main results of this subsection, we need to clarify the irreducibility assumption. 
The problem is the irreducibility of the set~$\cA$  does not a priory imply the irreducibility of the set of 
exponents~$e^{\, \tau \cA}$. For some rare cases of the parameter $\tau$, the set~$e^{\, \tau \cA}$ may obtain 
common invariant subspaces. 
To avoid this difficulty, we introduce the notion of admissible numbers~$\tau$. Fix an arbitrary small number $\delta > 0$. 
For a given matrix $A$, we denote
by ${\rm sp}\, (A)$ the set of its eigenvalues. Consider the union of the two following sets:
$$
\left\{ \, \frac{2\pi n}{|{\rm Im}\, (\lambda_i - \lambda_j)|}\, , \  \lambda_i, \lambda_j \in {\rm sp}\, (A)\, , \
{\rm Im}\, (\lambda_i) \ne {\rm Im}\, (\lambda_j) \, , \ n \in \n \, \right\}
$$
and
$$
\left\{ \, \frac{\pi n}{|{\rm Im}\, (\lambda_i)|}\, , \  \lambda_i \in {\rm sp}\, (A)\, , \
\lambda_i\notin \re  \, , \ n \in \n \, \right\}\, 
$$  
and intersect this union with the segment $[0,2]$. We obtain the set~$\cT_0(A)$. 
Let $\cT_0 (\delta, A)$ be some open subset of 
the segment~$[0,2]$ of measure $\delta/2$ that contains $\cT_0(A)$. Finally, 
let $\cT(\delta, A) = \cup_{k\ge 0}2^{-k}\cT_0(\delta, \cA)$.  

Thus, $\cT (\delta, A)$ is an open subset of the segment $[0,2]$ of measure~$\delta$. 
This measure can be chosen arbitrarily small.
If a matrix set~$\cA$ is irreducible, then it has a finite irreducible subset~$\cA_f$. 
If $\cA$ is finite, then we take $\cA_f = \cA$, otherwise we take an arbitrary finite irreducible subset~$\cA_f \subset \cA$.
We fix $\cA_f$ and write $\cT(\delta, \cA)$ for the finite union $\, \cup_{A \in \cA_f} \cT(\delta, A)$. 
A number $\tau \in (0,2]$ is called {\em admissible} for $\cA$ if it does not belong to the set $\cT(\delta, \cA)$.  
Thus, all numbers except for those from a set $\cT(\delta, \cA)$ of arbitrarily small measure are admissible. 
So, a generic number~$\tau > 0$ is admissible.  
 \begin{lemma}\label{l5}
 If a set of matrices~$\cA$ is irreducible, then for any admissible number~$\tau \in (0, 2]$, 
 the set $e^{\, \tau \cA}$ is irreducible. 
 \end{lemma}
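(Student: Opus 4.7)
The plan is to argue contrapositively: supposing a nontrivial subspace $V \subset \re^d$ is invariant under every element of $e^{\, \tau \cA}$, I will show that $V$ is actually $\cA$-invariant, contradicting irreducibility. By the very definition of admissibility, it suffices to work with the finite irreducible subset $\cA_f$ and prove that for each $A \in \cA_f$ and each admissible $\tau$ (so in particular $\tau \notin \cT_0(A)$), every subspace invariant under $e^{\, \tau A}$ is also $A$-invariant. The natural route is to express $A$ itself as a \emph{real} polynomial in $e^{\, \tau A}$, since then any subspace preserved by $e^{\, \tau A}$ is preserved by every element of the algebra $\re[e^{\, \tau A}]$, and in particular by $A$.

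To construct such a polynomial I would use Hermite interpolation at the spectrum of $e^{\, \tau A}$. Let $\lambda_1, \ldots, \lambda_r$ denote the distinct complex eigenvalues of $A$ with Jordan block sizes $m_1, \ldots, m_r$; then $e^{\, \tau A}$ has eigenvalues $\mu_i = e^{\, \tau \lambda_i}$ with the same block sizes. The first exclusion set $\{2\pi n / |\mathrm{Im}(\lambda_i - \lambda_j)| : n \in \n\}$ in $\cT_0(A)$ is exactly the set of $\tau > 0$ for which $\mu_i = \mu_j$ for some $i \ne j$: when $\mathrm{Re}(\lambda_i) \ne \mathrm{Re}(\lambda_j)$ the moduli $|\mu_i|, |\mu_j|$ already differ, and otherwise $\mu_i = \mu_j$ iff $\tau\, \mathrm{Im}(\lambda_i - \lambda_j) \in 2\pi \z$. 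Since $\tau \notin \cT_0(A)$ the values $\mu_i$ are pairwise distinct, so Hermite interpolation uniquely determines a complex polynomial $p$ satisfying $p^{(k)}(\mu_i) = c_{i,k}$ for $0 \le k < m_i$, where the $c_{i,k}$ are the Taylor coefficients at $\mu_i$ of the branch of $\tau^{-1}\log$ mapping $\mu_i$ back to $\lambda_i$. A direct substitution into the Jordan form then yields $p(e^{\, \tau A}) = A$.

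Finally I need $p$ to have real coefficients. Since $A$ is real, its non-real eigenvalues come in conjugate pairs $\lambda_i, \overline{\lambda_i}$, and the corresponding interpolation data is conjugation-symmetric, which forces $p$ to be conjugation-symmetric (hence real-coefficient), provided that $\mu_i = e^{\, \tau \lambda_i}$ is non-real whenever $\lambda_i$ is non-real. This last condition is precisely what the second exclusion set $\{\pi n / |\mathrm{Im}(\lambda_i)| : n \in \n\}$ in $\cT_0(A)$ enforces. With $A \in \re[e^{\, \tau A}]$ in hand, any $V$ invariant under all of $e^{\, \tau \cA_f}$ is also invariant under all of $\cA_f$, and irreducibility of $\cA_f$ (hence of $\cA \supseteq \cA_f$) forces $V$ to be trivial. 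The main obstacle I expect is the bookkeeping in the non-semisimple case: Hermite interpolation must match higher derivatives at each $\mu_i$ to reproduce the nilpotent parts of $A$'s Jordan blocks, and one must check that this can be done consistently with the conjugation symmetry needed to obtain real coefficients.
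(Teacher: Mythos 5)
Your proof is correct, and it supplies a rigorous mechanism for the one step that the paper's proof merely asserts. Both arguments share the same reduction: a common invariant subspace of $e^{\,\tau \cA}$ is in particular invariant for $e^{\,\tau \cA_f}$, so it suffices to show that for each $A$ in the finite irreducible subset $\cA_f$ and each admissible $\tau$, every subspace invariant under $e^{\,\tau A}$ is already invariant under $A$. The paper handles this by claiming that $e^{\,\tau A}$ can acquire a new invariant subspace only if two distinct eigenvalues of $A$ collide under $\lambda \mapsto e^{\,\tau\lambda}$ or a non-real eigenvalue becomes real, and then checking that $\cT_0(A)$ excludes precisely these $\tau$; the claim itself is left unproved. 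You prove it: once the $\mu_i = e^{\,\tau\lambda_i}$ are pairwise distinct and non-real whenever $\lambda_i$ is (the latter case is in fact already covered by the first exclusion set applied to the conjugate pair $\lambda_i, \overline{\lambda_i}$, since $2\pi n/|{\rm Im}(\lambda_i-\overline{\lambda_i})| = \pi n/|{\rm Im}\,\lambda_i|$), Hermite interpolation of the local logarithm branches yields a real polynomial $p$ with $p(e^{\,\tau A}) = A$, so the invariant lattice of $e^{\,\tau A}$ is contained in that of $A$. The only imprecision is your statement that the first exclusion set is ``exactly'' the collision locus --- it is a superset, since it also excludes values of $\tau$ for pairs with ${\rm Re}\,\lambda_i \ne {\rm Re}\,\lambda_j$ where no collision can occur --- but excluding extra values is harmless and your following sentence treats the implication in the correct direction. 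In short, your route is longer but self-contained; the paper's is terser but rests on an unjustified spectral characterization of when new invariant subspaces appear.
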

\begin{proof} 
If for every~$A \in \cA$, the matrix~$e^{\, \tau A}$ has the same invariant subspaces as~$A$, 
then the family~$e^{\, \tau \cA}$ is irreducible. Otherwise, if some matrix~$A \in \cA$ gets a new invariant subspace after taking its exponent, then either some of its complex eigenvalues~$\lambda_i \in {\rm sp}\, (A)$ becomes real, or 
two its different eigenvalues~$\lambda_i, \lambda_j \in {\rm sp}\, (A)$ become equal. The former means that 
${\rm Im}\, \bigl( e^{\,\tau \lambda_i} \bigr) = \sin \bigl( {\rm Im}\, (\tau \lambda_i)\, \bigr) = 0$ and hence 
$\tau {\rm Im}\, (\lambda_i) = \pi n\, , \, n \in \z$; the latter means that 
$ e^{\,\tau \lambda_i}  =    e^{\,\tau \lambda_j} $, and hence $\tau \, {\rm Im}\, (\lambda_i - \lambda_j) = 2\pi n, \, n \in \z$. In both cases we have $\tau \in \cT (\delta, A)$, and $\tau$ is not admissible. \qed
\end{proof}

In what follows we always assume that the number $\tau$
is admissible, and hence the set $e^{\, \tau \cA}$ is irreducible. 
The double inequality~(\ref{lu}) localizes the Lyapunov exponent to the
segment $[\beta(\tau) \, , \, \alpha(P)]$. The following theorem estimates the length of this
segment in case the polytope~$P$ is $\eps$-extremal.
\begin{theorem}\label{th8}
For every compact irreducible family~$\cA$,  there is a positive constant~$C = C(\cA)$ such that
for all $ \, \eps \ge  0$ and admissible~$\tau \in (0,1)$,  we have
$$
\alpha (P) \ - \ \beta (\tau) \ \le \ C\tau \, + \, \eps\, ,
$$
whenever~$P$ is $\eps$-extremal for~$e^{\tau \cA}$.
\end{theorem}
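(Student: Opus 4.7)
The plan is to exhibit a constant $C=C(\cA)$ such that, setting $\alpha=\beta(\tau)+\eps+C\tau$, the vector $(A-\alpha I)v$ is directed strictly inside $P$ for every vertex $v$ of $P$ and every $A\in\cA$. By the definition of $\alpha(P)$ in~(\ref{def-alpha}), this will give $\alpha(P)\le\alpha$, proving the theorem. Throughout the argument, let $\mu_P$ denote the Minkowski gauge (polytope norm) of $P$, so every vertex $v$ satisfies $\mu_P(v)=1$.

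The central step uses the $\eps$-extremality one-step bound. Taking $\eta=\tau$ in the definition of ``directed inside'' and writing $e^{\tau(A-\alpha I)}=e^{-\tau\alpha}\,e^{\tau A}$, condition~(\ref{varep}) gives
\[
\mu_P\bigl(e^{\tau(A-\alpha I)}v\bigr)\;=\;e^{-\tau\alpha}\mu_P\bigl(e^{\tau A}v\bigr)\;\le\;e^{\tau(\beta+\eps-\alpha)}\;=\;e^{-C\tau^{2}}.
\]
Taylor-expanding $e^{\tau(A-\alpha I)}=I+\tau(A-\alpha I)+\tau^{2}B$ and rearranging,
\[
v+\tau(A-\alpha I)v\;=\;e^{\tau(A-\alpha I)}v-\tau^{2}Bv,
\]
so that, combining with the above and $e^{-C\tau^{2}}\le 1-C\tau^{2}/2$ for $\tau$ small,
\[
\mu_P\bigl(v+\tau(A-\alpha I)v\bigr)\;\le\;e^{-C\tau^{2}}+\tau^{2}\mu_P(Bv)\;\le\;1-\tfrac{C}{2}\tau^{2}+\tau^{2}\mu_P(Bv).
\]
The argument now reduces to establishing a uniform estimate $\mu_P(Bv)\le D(\cA)$; with such a bound, the choice $C>2D(\cA)$ forces $\mu_P(v+\tau(A-\alpha I)v)<1$, as required. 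The regimes of large $\eps$ or $\tau$ bounded away from zero can be handled by trivially enlarging $C$, so we may henceforth assume $\eps$ and $|\alpha|$ bounded in terms of $\cA$.

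The main obstacle is the uniform bound on $\mu_P(Bv)$. Since $Bv=\tfrac{1}{2}(A-\alpha I)^{2}v+O(\tau)$, this reduces to a uniform two-sided comparison of $\mu_P$ with a fixed reference norm, with constants depending only on $\cA$---equivalently, a uniform bound on the circumradius-to-inradius ratio of $P$ over all admissible $\tau\in(0,1)$, $\eps\ge 0$, and $\eps$-extremal polytopes $P$. Here the irreducibility hypothesis is essential: by Lemma~\ref{l5}, $e^{\tau\cA}$ is irreducible for admissible $\tau$, and Theorem~A applied to $\cA$ yields an invariant Barabanov norm $\|\cdot\|_{*}$ satisfying $\|e^{\tau A}x\|_{*}\le e^{\tau\sigma}\|x\|_{*}$ for all $A\in\cA$ and $\tau\ge 0$. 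A compactness argument, exploiting that $\eps$-extremality forces $\mu_P$ to contract $e^{\tau\cA}$ at essentially the same rate as $\|\cdot\|_{*}$, then produces constants $c_{1}(\cA),c_{2}(\cA)>0$ with $c_{1}\|\cdot\|_{*}\le\mu_P\le c_{2}\|\cdot\|_{*}$. This equivalence, together with the uniform bound $\|Ax\|_{*}\le M_{*}(\cA)\|x\|_{*}$, delivers the required estimate on $\mu_P(Bv)$ and completes the proof.
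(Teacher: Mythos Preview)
Your overall structure is essentially the same as the paper's: Taylor-expand the exponential, use $\eps$-extremality to control the leading part, and reduce to a uniform bound on the remainder in the polytope norm $\mu_P$. The algebra in your first few displayed steps is correct.

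The gap is precisely where you acknowledge it: the sentence ``A compactness argument, exploiting that $\eps$-extremality forces $\mu_P$ to contract $e^{\tau\cA}$ at essentially the same rate as $\|\cdot\|_*$, then produces constants $c_1(\cA),c_2(\cA)>0$\ldots'' is not a proof. What compact set? What continuous function? The difficulty is that the equivalence constants must be uniform simultaneously over all admissible $\tau\in(0,1)$ and over all $\eps$-extremal polytopes $P$ for $e^{\tau\cA}$; neither of these ranges is obviously compact in a topology that makes the equivalence constants continuous. The mere fact that $\mu_P$ and the Barabanov norm contract at comparable rates does not by itself prevent $P$ from becoming arbitrarily eccentric as $\tau\to 0$.

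The paper resolves this with a quantitative tool developed beforehand. It introduces the Kozyakin--Pokrovsky irreducibility measure $h(\cB)$ (the largest radius $r$ such that the symmetrized convex hull of the length-$\le d-1$ orbit of any unit vector contains $B(0,r)$), proves that $h(\cB)>0$ iff $\cB$ is irreducible, and shows (Lemma~\ref{l25}) that any contractive norm for $\cB$ satisfies $h(\cB)\|x\|_c\le\|x\|\le\|x\|_c$ after rescaling. The key trick (Proposition~\ref{p20}) is then: a norm contractive for $e^{\tau\cA}$ is also contractive for $e^{2^n\tau\cA}$, and choosing $n$ so that $2^n\tau\in[1,2]$ gives a lower bound $H(\cA)=\min_{t\in[1,2]}h(e^{t\cA})>0$ independent of $\tau$. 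This doubling argument is the missing idea; your Barabanov-norm sketch does not supply it or any substitute.
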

Thus, by inequality~(\ref{lu}), every dwell time~$\tau > 0$  gives the lower bound~$\beta(\tau)$
for the Lyapunov exponent, and that dwell time with an $\eps$-extremal polytope~$P$ give the lower bound~$\alpha(P)$.
Theorem~\ref{th8} ensures that the precision of these bounds is linear in~$\tau$ and $\eps$, provided 
$\cA$ is irreducible and $\tau$ is admissible. In particular, for $\eps = 0$, 
we have
\begin{cor}\label{c5}
If the polytope~$P$ is extremal for~$e^{\tau \cA}$, then $\alpha (P) - \beta (\tau) \le C\tau$.
\end{cor}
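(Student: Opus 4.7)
The plan is to convert the discrete $\eps$-contraction $\|e^{\tau A}x\|_P \le e^{\tau(\beta+\eps)}\|x\|_P$ into the infinitesimal condition defining $\alpha(P)$ by Taylor-expanding the exponential at $\tau = 0$, absorbing the quadratic remainder into an $O(\tau)$ error. First, by the shift identity~(\ref{plus}), I would set $\cA' = \cA - (\beta(\tau)+\eps)I$ so that the $\eps$-extremality of $P$ becomes the discrete invariance $e^{\tau A'}P \subset P$ for every $A'\in\cA'$. Since $\alpha(P,\cA) = \alpha(P,\cA') + \beta(\tau)+\eps$, the theorem reduces to showing $\alpha(P,\cA') \le C(\cA)\,\tau$.

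Next, fix a vertex $v$ of $P$ with $\|v\|_P = 1$, an $A' \in \cA'$, and a face normal $v_j^*$ active at $v$, meaning $(v_j^*,v) = 1$; unpacking the definition of ``directed inside $P$'' at a vertex shows that $\alpha(P,\cA')$ is the supremum of $(v_j^*,A'v)$ over all such triples. Invariance yields $(v_j^*, e^{\tau A'}v) \le 1$, and the second-order expansion
$$e^{\tau A'}v \;=\; v + \tau A'v + \tau^2 R, \qquad \|R\|_2 \le \tfrac{1}{2}\|A'\|^2 e^{\tau\|A'\|}\|v\|_2,$$
rearranges to $(v_j^*,A'v) \le -\tau(v_j^*,R) \le C_1\, \tau\, \|v_j^*\|_2\|v\|_2$, with $C_1$ depending only on $\cA$ through $\max_{A\in\cA}\|A\|$ once $|\beta(\tau)+\eps|$ is under control. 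Taking the supremum therefore reduces the theorem to a uniform bound
$$\|v_j^*\|_2\|v\|_2 \;\le\; K(\cA)$$
on the Euclidean aspect ratio of $P$ at each vertex, independent of $\tau$ and $\eps$.

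For the aspect-ratio bound I would use irreducibility. Lemma~\ref{l5} gives irreducibility of $e^{\tau\cA'}$ for admissible $\tau$, and Proposition~D then furnishes, for each vertex $v$, products $B^{(1)},\dots,B^{(d)}$ of elements of $e^{\tau\cA'}$ such that the vectors $B^{(i)}v$ span $\re^d$ with a quantitative lower bound on their minimum singular value. Invariance of $P$ forces $B^{(i)}v \in P$, so their Euclidean norms are bounded by the circumradius of $P$, and the span condition then yields a lower bound on the inradius proportional to the circumradius. The $\eps$-dependence of $C_1$ is handled by a case split: for $|\beta(\tau)+\eps|$ bounded in terms of $\cA$ the preceding argument goes through, and for the remaining large-$\eps$ regime the crude estimate $\alpha(P)-\beta(\tau) \le M\cdot K(\cA) + M$ (with $M = \max_{A\in\cA}\|A\|$) is already absorbed by the $\eps$ on the right of the target inequality. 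The principal obstacle is producing a $\tau$-uniform version of Proposition~D as $\tau \to 0$, where the matrices $e^{\tau A'}$ collapse to the identity and the spanning product length must grow like $1/\tau$; one must exploit that the algebra generated by $\cA$ still acts irreducibly to keep the resulting spanning constant from degenerating as $\tau \to 0$.
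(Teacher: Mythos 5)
Your argument is, at its core, a dual reformulation of the paper's proof of Theorem~\ref{th8} (of which this corollary is just the case $\eps=0$): where you pair the invariance $e^{\tau A'}P\subset P$ against an active face normal $v_j^*$ and Taylor-expand to get $(v_j^*,A'v)\le C_1\tau\,\|v_j^*\|_2\|v\|_2$, the paper runs the same second-order expansion in the polytope norm itself, obtaining $\|(I+\tau A')v\|_P<1+C\tau^2$ and exhibiting the interior point $y=(1+C\tau^2)^{-1}(I+\tau A')v$ so that $y-v$ is a positive multiple of $(A'-C\tau I)v$. Your reduction of $\alpha(P,\cA')$ to the supremum of $(v_j^*,A'v)$ over active vertex--normal pairs is correct, and in both versions everything hinges on the same quantity: a bound on $\|v_j^*\|_2\|v\|_2$, equivalently a two-sided comparison $c(\cA)\,\|x\|_P\le\|x\|_2\le\|x\|_P$ with $c(\cA)>0$ independent of $\tau$.

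That uniform comparison is exactly what you do not prove, and you correctly flag it as ``the principal obstacle''; as written, this is a genuine gap, because the route you sketch is the hard way around it. Proposition~D (or the Kozyakin measure $h$ of Lemma~\ref{l15}) applied directly to the family $e^{\tau\cA'}$ degenerates as $\tau\to 0$: the matrices collapse to $I$, the orbit of a point under products of bounded length collapses onto a line, and the spanning constant goes to zero unless one controls products of length $\sim 1/\tau$, which Proposition~D's non-uniform constant does not give you. The paper closes this gap with a doubling trick (Proposition~\ref{p20}): a norm contractive for $e^{\tau\cA'}$ is automatically contractive for its powers $e^{2^n\tau\cA'}$, and choosing $n$ with $2^n\tau\in[1,2]$ reduces the comparison to Lemma~\ref{l25} applied on the compact parameter range $[1,2]$, where $h(e^{t\cA})$ has a positive minimum $H(\cA)$ over admissible $t$. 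Note also that this step consumes the admissibility hypothesis on $\tau$ (via Lemma~\ref{l5}), which guarantees that $e^{2^n\tau\cA}$ is still irreducible; your proposal never uses admissibility, which is a sign that the uniformity argument is missing rather than merely elided. With Proposition~\ref{p20} supplied, the rest of your computation goes through.
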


To prove Theorem~\ref{th8} we begin with several auxiliary facts. First,
for an arbitrary compact set of matrices~$\cA$ there is a constants~$C$ such that
\begin{equation}\label{const1}
\Bigl\|\, e^{\, \tau \, A}\, \, - \,  \bigl(\, I \, + \, \tau\, A \, \bigr) \, \Bigl\|
\ < \ C\, \tau^2\, \qquad \mbox{ for every }\ A \in \cA \, , \  \tau \, \in (0, 1)\, .
\end{equation}
For the proof, it suffices to write the Taylor expansion of the matrix exponent and to
estimate the norm of the rest  $\sum_{k = 2}^{\infty}\, \frac{\tau^k}{k!}\, A^k$.

We make use of the following measure of irreducibility suggested by  Kozyakin and Pokrovsky in~\cite{KozP,Koz}.
Denote $\mathbf{O}(x) = {\rm co}\, \bigl\{\pm \, \Pi_k x \ \bigl| \  \Pi_k \in \cA^k \, , \, k = 0, \ldots , d-1\bigr\}$.
This is the symmetrized convex hull of the orbit of point~$x$ by products of length~$ \le k$
of matrices  from~$\cA$. Consider the value
\begin{equation}\label{koz}
h(\cA) \ = \  \max \, \Bigl\{ \, r  \ge 0 \ \Bigl| \  \forall x \in \re^d \, , \, \|x\| = 1 \, , \quad
B(0, r) \subset \mathbf{O}(x)   \  \Bigr\}    \, .
\end{equation}
Thus, $h(\cA)$ is the radius of the biggest Euclidean ball  contained in the convex hull
of the set $\mathbf{O}(x)$, for each
point $x$ from the unit sphere. The following lemma was proved in~\cite{Koz}:
\begin{lemma}\label{l15}
The family~$\cA$ is reducible if and only if~$h(\cA) = 0$.
\end{lemma}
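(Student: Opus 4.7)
The plan is to prove both directions by exploiting the geometric meaning of $h(\cA)$: $h(\cA)=0$ expresses precisely that the symmetrized orbit-hull $\mathbf{O}(x)$ has empty interior for some unit vector~$x$.

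\smallskip

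\textbf{Easy direction ($\cA$ reducible $\Rightarrow h(\cA)=0$).} If $V\subset\re^d$ is a proper common invariant subspace, pick any unit vector $x_0\in V$. Every product $\Pi_k x_0$ with $\Pi_k \in \cA^k$ lies in $V$, hence $\mathbf{O}(x_0)\subset V$. Since $V$ is proper, $\mathbf{O}(x_0)$ contains no Euclidean ball around the origin, so the condition defining $h(\cA)$ fails for every $r>0$; thus $h(\cA)=0$.

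\smallskip

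\textbf{Harder direction ($h(\cA)=0 \Rightarrow \cA$ reducible).} For a unit vector $x$ let $r(x)$ denote the inradius of $\mathbf{O}(x)$. First I would observe that $\mathbf{O}(x)$ depends continuously (in Hausdorff distance) on $x$, because $\cA$ is compact and only finitely many lengths $k=0,\ldots,d-1$ appear, so the finite union $\bigcup_{k=0}^{d-1}\cA^k x$ is Hausdorff-continuous in $x$ and symmetrized convex hull preserves this. The inradius is a continuous functional on the space of convex bodies, hence $r$ is continuous on the compact unit sphere. Consequently $h(\cA)=\min_{\|x\|=1} r(x)$ is attained, so $h(\cA)=0$ yields some $x_0$ with $\|x_0\|=1$ and $r(x_0)=0$. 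This forces $\mathbf{O}(x_0)$ to lie in a proper linear subspace of $\re^d$.

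\smallskip

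\textbf{Invariance via a dimension-stabilization argument.} Set $V_k = \mathrm{span}\{\Pi_j x_0 \mid \Pi_j\in\cA^j,\ 0\le j\le k\}$, so $V_0\subset V_1\subset\cdots$ and $V_{d-1}=\mathrm{span}\,\mathbf{O}(x_0)$. If at some step $V_k = V_{k+1}$, then $A y \in V_{k+1}=V_k$ for all $A\in\cA$ and $y\in V_k$, i.e.\ $V_k$ is $\cA$-invariant and the chain stabilizes thereafter. Otherwise each step strictly increases the dimension, so after at most $d-1$ strict increases, starting from $\dim V_0=1$, the chain reaches $\re^d$. In either case $V_{d-1}$ is $\cA$-invariant. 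Since $\mathbf{O}(x_0)$ has empty interior, $V_{d-1}$ is a proper subspace, and it contains $x_0\ne 0$. Hence $\cA$ has a nontrivial common invariant subspace, i.e.\ it is reducible.

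\smallskip

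\textbf{Main obstacle.} The delicate point is the upward-chain argument: one must argue that truncating at $k=d-1$ already captures a $\cA$-invariant subspace, even though $\cA$ may be infinite and noncommutative. The observation that the inclusions $V_k\subset V_{k+1}$ are either strict or stabilize permanently (because they are generated by the \emph{entire} family $\cA$ applied to a fixed spanning set) is what makes the bound $d-1$ in the definition of $\mathbf{O}(x)$ exactly the right one, and it is the only non-routine ingredient in the argument.
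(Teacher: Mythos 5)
Your proof is correct and follows essentially the same route as the paper: the reducible~$\Rightarrow h=0$ direction is identical, and the converse uses the same two ingredients, namely a compactness argument to produce a unit vector $x_0$ with $\mathbf{O}(x_0)$ of empty interior, followed by the dimension-stabilization chain of subspaces (your $V_k$ are the paper's $L_{i}$ up to indexing) yielding a proper common invariant subspace. The only difference is that you spell out the compactness step via Hausdorff continuity of $x\mapsto \mathbf{O}(x)$ and of the inradius, which the paper leaves implicit.
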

\begin{proof} 
If $h(\cA) = 0$, then by  the compactness it follows that there
exists a point $x\, , \, \|x\| = 1$ for which the set $\mathbf{O}(x)$
has an empty interior.
Consider a sequence $\{L_i\}_{i \in \n}$ of subspaces of $\re^d$ defined recursively as follows:
$L_1 = {\rm span}\, (x)\, , \ L_{i+1} = {\rm span}\, \bigl( L_i\, , \, \cup_{A \in \cA}, AL_i\bigr)\, , \, i \in \n$.
Clearly, this is an embedded sequence, i.e., $L_i \subset L_{i+1}$. If this inclusion is strict
for all $i = 1, \ldots , d-1$, then the dimensions of subspaces strictly increase each time, and hence
${\rm dim}\, L_d \ge  d$. On the other hand, $L_d$ is a linear span of the set~$\mathbf{O}(x)$, therefore, its dimension is smaller than~$d$. Thus, for some $i \le d-1$ we have $L_i = L_{i+1}$, and hence the subspaces $L_n$ coincide for all~$n \ge i$.
In particular, $L_d = L_{d+1}$. Consequently, $AL_d \subset L_d$ for all $A \in \cA$, and so $\cA$ is reducible.

 Conversely, if~$\cA$ has a proper common invariant subspace~$L$, then, for every~$x \in L$ the set  $\mathbf{O}(x)$ lies in~$L$, and hence, does not contain any ball. \qed
\end{proof}

For a given family $\cA$, we consider the class of {\em contractive} norms $\|\cdot \|_c$ in $\re^d$ for which
$\|A\|_c \le 1\, , \ A \in A$. The following result shows that for an irreducible family~$\cA$,
all contractive norms are equivalent.

\begin{lemma}\label{l25}
Let $\cA$ be a compact family of matrices. If a norm~$\|\cdot \|_{c}$ is contractive for~$\cA$,
then, after a multiplication of this norm by a constant, we have
$$
h(\cA)\, \|x\|_c \ \le \ \|x\| \ \le \ \|x\|_c\, , \qquad x \in \re^d\, .
$$
\end{lemma}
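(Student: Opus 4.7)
The plan is to choose the normalization of $\|\cdot\|_c$ so that its unit ball $B_c$ sits inside the Euclidean unit ball and touches the Euclidean sphere at some point, and then use the measure of irreducibility $h(\cA)$ together with contractivity to show that $B_c$ also contains the Euclidean ball of radius $h(\cA)$.

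More concretely, I would first rescale $\|\cdot\|_c$ by the constant $\, \bigl(\max_{\|y\|_c = 1}\|y\|\bigr)^{-1}\,$ (finite since all norms on $\re^d$ are equivalent) so that after the rescaling $\max_{\|y\|_c = 1}\|y\| = 1$. This immediately yields the right inequality $\|x\| \le \|x\|_c$ for every $x$, and moreover, by compactness of the $\|\cdot\|_c$-unit sphere, there exists a point $x_0$ with $\|x_0\|_c = \|x_0\| = 1$.

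Next I would use this $x_0$ to derive the left inequality. By the very definition~(\ref{koz}) of $h(\cA)$, the Euclidean ball $B(0, h(\cA))$ is contained in $\mathbf{O}(x_0)$, so every $y$ with $\|y\| \le h(\cA)$ can be written as a convex combination of vectors of the form $\pm\, \Pi_k x_0$, where $\Pi_k \in \cA^k$ for some $k \in \{0,\ldots,d-1\}$. Since $\|\cdot\|_c$ is contractive for $\cA$, each such product satisfies $\|\pm\Pi_k x_0\|_c \le \|x_0\|_c = 1$, and since the $\|\cdot\|_c$-unit ball is convex and symmetric, any convex combination of these points also has $\|\cdot\|_c$-norm at most $1$. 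Therefore $\|y\|_c \le 1$ whenever $\|y\| \le h(\cA)$, which by positive homogeneity is exactly $h(\cA)\|x\|_c \le \|x\|$ for all $x \in \re^d$.

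I don't expect any serious obstacle here; the whole argument is essentially the observation that a contractive norm must absorb the symmetrized orbit $\mathbf{O}(x_0)$ of its "extremal" point $x_0$, combined with the Kozyakin--Pokrovsky quantitative irreducibility statement $B(0, h(\cA)) \subset \mathbf{O}(x_0)$. The only mildly delicate point is to be sure that the normalization is well-defined and yields the desired equality case (existence of $x_0$ with $\|x_0\|_c=\|x_0\|=1$), for which one uses compactness of the $\|\cdot\|_c$-unit sphere and the equivalence of norms on $\re^d$.
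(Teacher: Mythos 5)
Your argument is correct and is essentially the paper's own proof: you normalize so that the $\|\cdot\|_c$-unit ball touches the Euclidean unit ball from inside (the paper phrases the same normalization dually, by making the minimum of $\|\cdot\|_c$ on the Euclidean sphere equal to $1$ at a point $z=x_0$), and then observe that contractivity forces the symmetrized orbit $\mathbf{O}(x_0)$, and hence by the definition~(\ref{koz}) the Euclidean ball $B(0,h(\cA))$, into the $\|\cdot\|_c$-unit ball. The only nit is the rescaling constant: to achieve $\max_{\|y\|_c=1}\|y\|=1$ one multiplies $\|\cdot\|_c$ by $\max_{\|y\|_c=1}\|y\|$ rather than by its inverse, a bookkeeping detail that does not affect the argument.
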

\begin{proof}  
Let the minimum of the norm~$\|\cdot\|_c$ on the Euclidean sphere is attained
at a point~$z$. Multiplying the norm~$\| \cdot \|_c$ by a constant, we assume that~$\|z\|_c =1$.
Thus, $\|x\|_c \ge \|x\|$ for all~$x$. Since the norm $\|\cdot \|_c$ is contractive,
it follows that the set~$\mathbf{O}(z)$ is contained in the unit ball of the norm~$\|\cdot \|_c$. By Lemma~\ref{l15}, this
set contains the Euclidean ball of radius~$h(\cA)$. Whence, $\|x\|  \ge h(\cA)\|x\|_c , \, x \in \re^d$. \qed
\end{proof}

Let us denote $H(\cA)$ the minimum of the function~$h(e^{\, t \cA})$ taken over 
admissible points $t \in [1,2]$ . This minimum is attained
at some point~$t_0 \in [1,2]$, because $h(e^{\, t \cA})$ is a continuous function of~$t$
and the set of admissible points~$t \in [1,2]$ is compct. If~$\cA$ is
 irreducible, then so is~$e^{\, t A}$,
 and hence~$H(\cA) = h(e^{\,  t_0 \cA}) > 0$.
 Thus, $H(\cA)$ is strictly positive for irreducible~$\cA$.

For a given $\tau \in (0,1)$, we consider the family~$\, e^{\tau \cA}$. By Lemma~\ref{l25},
all contractive norms of this family are equivalent. We are going to prove that
they are uniformly equivalent for all admissible $\tau \in (0,1)$.

\begin{prop}\label{p20}
Let~$\cA$ be a compact irreducible family of matrices,
then for each admissible~$\tau \in (0,1)$ and for every contractive  norm~$\|\, \cdot \, \|_{\, \tau}$
of the family~$e^{\, \tau \, \cA}$, after a multiplication of this norm by a constant, we have
\begin{equation}\label{norm1}
\, H(\cA)\ \| x \|_{\, \tau} \quad \le \quad \|x \| \quad \le \quad \, \|x\|_{\, \tau}\, ,
\qquad  x \in \re^d\, .
\end{equation}
\end{prop}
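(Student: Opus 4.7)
The plan is to reduce the claim to Lemma~\ref{l25} applied not to the family $e^{\tau\cA}$ directly (whose irreducibility constant $h(e^{\tau\cA})$ can degenerate as $\tau\to 0$) but to a rescaled family $e^{\, t\cA}$ with $t \in [1,2]$, on which $h$ is uniformly bounded below by $H(\cA)>0$. The bridge between $\|\cdot\|_{\tau}$ and that rescaled family is the simple identity $(e^{\,\tau A})^{2^m} = e^{\,2^m\tau A}$.

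First, given an admissible $\tau \in (0,1)$, I pick the unique integer $m\ge 1$ with $t := 2^{m}\tau \in [1,2]$, and I verify that $t$ is itself admissible. This is the key observation: the bad set is constructed as $\cT(\delta,\cA) = \cup_{k\ge 0}\, 2^{-k}\cT_0(\delta,\cA)$, so membership of any point in $\cT(\delta,\cA)$ is preserved under halving, and conversely admissibility is preserved under doubling (as long as one stays in $[0,2]$). Iterating $m$ times, $t=2^m\tau$ is admissible.

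Next, since $\|\cdot\|_\tau$ is contractive for $e^{\,\tau\cA}$, for every $A\in\cA$ and every $x\in\re^d$,
$$
\|e^{\, t A}\, x\|_\tau \ =\ \|(e^{\,\tau A})^{2^m}\, x\|_\tau \ \le\ \|x\|_\tau,
$$
so $\|\cdot\|_\tau$ is contractive for the family $e^{\,t\cA}$. Applying Lemma~\ref{l25} to $e^{\,t\cA}$ yields that, after multiplying $\|\cdot\|_\tau$ by a suitable constant (the reciprocal of the minimum of $\|\cdot\|_\tau$ on the Euclidean unit sphere),
$$
h(e^{\, t\cA})\, \|x\|_\tau \ \le\ \|x\|\ \le\ \|x\|_\tau, \qquad x\in\re^d.
$$
Since $t \in [1,2]$ is admissible, by the definition of $H(\cA)$ we have $h(e^{\,t\cA})\ge H(\cA)$, so $H(\cA)\|x\|_\tau \le \|x\|\le \|x\|_\tau$, which is~(\ref{norm1}).

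The main (and essentially only) subtle point is the admissibility of $t=2^m\tau$: without it one could not invoke $h(e^{\,t\cA})\ge H(\cA)$, as $H(\cA)$ is defined only over admissible parameters. Everything else — the iteration of the contractivity inequality and the appeal to Lemma~\ref{l25} — is routine. Note also that irreducibility of $\cA$ enters twice: through Lemma~\ref{l25} (via $h(e^{\,t\cA})$) and through the strict positivity $H(\cA)>0$, which is guaranteed by Lemma~\ref{l5} and the continuity/compactness argument used in the definition of~$H$.
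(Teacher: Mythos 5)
Your proof is correct and follows essentially the same route as the paper: doubling $\tau$ up to $t=2^m\tau\in[1,2]$, observing that contractivity passes to $e^{\,t\cA}$ via $(e^{\,\tau A})^{2^m}=e^{\,t A}$, and then invoking Lemma~\ref{l25} together with $h(e^{\,t\cA})\ge H(\cA)$. Your explicit justification that admissibility is preserved under doubling (from the structure $\cT(\delta,\cA)=\cup_{k\ge 0}2^{-k}\cT_0(\delta,\cA)$) is exactly the point the paper disposes of with the phrase ``by definition.''
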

\begin{proof} 
There is a natural number~$n$ such that~$2^n\tau \in [1,2]$.
By definition, the number~$2^n \tau$ is admissible, hence the set~$e^{\, 2^n\tau \cA}$ is irreducible. 
Since~$\|e^{\, \tau A}\|_{\tau} \le 1$ for every~$A \in \cA$, we have~$\|e^{\, 2^n \tau A}\|_{\tau} =  \|(e^{\, \tau A})^{\, 2^n}\|_{\tau}\le 1$, hence the norm $\|\cdot \|_{\tau}$ is contractive for the family~$e^{\, 2^n \tau \cA}$ as well.
Applying Lemma~\ref{l25} to this family, we obtain
$$
h\, (e^{\, 2^n \tau \cA})\, \|x\|_{\tau} \ \le \ \|x\| \ \le \ \|x\|_{\tau}\, .
$$
It remains to note that $h\, (e^{\, 2^n \tau \cA}) \ge H(\cA)$, because $2^n \tau \in [1,2]$. \qed
\end{proof}

Thus, all contractive norms of the families~$e^{\, \tau \, \cA}$ are equivalent, uniformly for all
admissible $\tau \in (0,1)$, to the Euclidean norm. 
%Note that this equivalence is uniform
%on every bounded segment~$(0, N)$, but may not be uniform on~$(0, +\infty)$.
% \smallskip

\subsection*{Proof of Theorem~\ref{th8}.}  

Without loss of generality, passing from the family~$\cA$ to $\cA - \beta I$, it can be assumed that
$ \beta  (\tau) = 0$. We estimate
$\alpha (P)$ from above by showing the existence of a constant~$C$ depending only on the family~$\cA$
such that for every vertex~$v \in P$ and for each~$A \in \cA$,
the vector~$(\, A \, - \, (C\tau + \eps)\, I)\, v$ is directed inside~$P$. This will imply $\alpha \le C\tau + \eps$.

Denote~$\cA' = \cA - \eps I$ and consider arbitrary~$A' \in \cA'$.
By~(\ref{const1}) the distance between points~$e^{\, \tau A'}v$ and $(I + \tau A')v$
is smaller than~$C\tau^2$. This distance can be measured in the norm~$\|\, \cdot\, \|_P$, where
it is smaller than~$C\tau^2$, with another constant~$C$, which depends neither on~$\tau$ nor on~$P$. Indeed,
by the $\eps$-extremality assumption, the norm~$\|\, \cdot\, \|_{P}$ is contractive for~$e^{\, \tau \cA'}$,
and hence, by Proposition~\ref{p20}, is equivalent to the Euclidean norm, uniformly in~$\tau \in (0,1)$.
Since for each vertex~$v \in P$, we have $\|\, e^{\, \tau A'}v\, \|_{P} \, \le \, \|v\|_P = 1$,
from the triangle inequality it follows that 
$$\|(I + \tau A')v\|_{P}\, < \, 1 \, + \, C\tau^2.$$
Therefore, the point \ $\, y \, = \, 1/(1+C\tau^2)\, (I + \tau A')\, v\, $ belongs to~${\rm int}\, P$,
and hence the vector from the point~$v$ to~$y$ is directed  inside~$P$.
On the other hand, $\, y - v \, = \, \tau/(1 + C\tau^2)\, \bigl( A' \, - \, C\tau \, I \, \bigr)\, v$,
consequently the vector~$( A'  -  C\tau I ) v = ( A  -  (C\tau + \eps) I ) v$  is directed  inside the polytope,
which concludes the proof.
\qed
% \smallskip

\subsection{Algorithm~(R) for computing the Lyapunov exponent
 and for constructing the polytope Lyapunov function}

Proposition~\ref{p50} and Theorem~\ref{th8} suggest the following method of approximate computation
of the Lyapunov exponent $\sigma(\cA)$:
\smallskip

1) choose a dwell time $\tau >$, and compute the joint spectral radius $\rho(e^{\, \tau \cA})$;

2) choose $\eps > 0$ and construct an $\eps$-extremal polytope~$P$ for the family~$e^{\, \tau \cA}$.
\smallskip

Then we localize the Lyapunov exponent $\sigma (\cA)$ on the segment $[\beta , \alpha]$, whose length
tends to zero  with a linear rate in~$\tau$ and $\eps$ as $\tau , \eps \to 0$.

For a given finite  irreducible family~$\cA = \{A_1, \ldots , A_m\}$ or for a polytope
family of matrices ${\rm co} \, (\cA)$, the algorithm approximates the Lyapunov exponent~$\sigma(\cA)$
by giving its lower and upper  bounds, and produces the corresponding polytope Lyapunov norm.
The main idea is the iterative construction of an $\eps$-extremal
polytope for a prescribed $\eps >0$. We begin with a brief description of the algorithm.
\smallskip

{\em Initialization}.  We choose a small admissible number $\tau > 0$, a small number $\nu \ge 0$,  and a reasonably large natural~$l$.
Among all products of length~$\le l$ of the matrices $e^{\, \tau A_j}\, , \ j = 1, \ldots , m$,
we select a {\em starting} product
$\Pi \, = \, e^{\, \tau A_{d_n}}\cdots e^{\, \tau A_{d_1}}\, , \ n \le l$ for which the
value $[\rho (\Pi)]^{1/n}$ ($n$ is the length of~$\Pi$) is as large is possible.
This can be done by mere exhaustion of the set of all products of lengths~$n = 1, \ldots , l$,
or by Gripenberg's algorithm~\cite{G},
or by the recent algorithm \cite{CicPro}. We denote~$\rho_l = [\rho(\Pi)]^{1/n},
\, \beta_l(\tau) = \tau^{-1} \ln \rho_l$. If we performed a complete exhaustion and the
value~$\rho_l = [\rho(\Pi)]^{1/n}$ is maximal among all products of lengths at most~$\Pi$, then we call
$\Pi$ the {\em maximal} product.
Observe that $\beta_l \le \beta$ and if we take only maximal products, then $\beta_l \to \beta$ as $l \to \infty$.
If the leading eigenvalue~$\lambda_{\max}$
of~$\Pi$ is real, we assume
it is positive, the case of a negative
eigenvalue is considered in the same way. In this case we denote by $v_1$ the leading eigenvector of~$\Pi$
(if it is not unique, we take any of them). If $\lambda_{\max}\notin \re$, then we set $v_1 = v + \bar v$,
where $v$ is the leading eigenvector. Then we normalize the family $\cA$ as
\begin{equation}
\tilde \cA = \cA - (\beta_l + \nu)I
\label{eq:normalized}
\end{equation}
where we recall that $\nu$ is a suitable small positive number.
\smallskip

{\em The first part. Construction of the polytope~$P$.}
We start with the set $\cV_0 = \{v_1, \ldots , v_n\}$ and with the corresponding polytope
(possibly, not full-dimensional) $P_0 = {\rm co}_s (\cV_0)$, where
$v_i = e^{\, \tau A_{d_{i-1}}}\cdots e^{\, \tau A_{d_{1}}}v_1$. At the $k$th step, $k \ge 1$,
we have a finite set of points ${\mathcal V}_{k-1}$ and a polytope $P_{k-1} = {\rm co}_s ({\mathcal V}_{k-1})$.
We take a point~$v \in {\mathcal V}_{k-1}$ added in the previous step, and for each $j = 1, \ldots , m$,
check if $e^{\tau \tilde A_j}v \in P_{k-1}$, by solving the corresponding LP problem. If
the answer is affirmative, then we go to the next $j$; if $j=m$, we go to the next point from~${\mathcal V}_{k-1}$.
Otherwise, if $e^{\tau \tilde A_j}v \notin P_{k-1}$, we update the set ${\mathcal V}_{k-1}$ by adding the point $v$, update, respectively,
the polytope $P_{k-1}$ by adding two new vertices $\{v, -v\}$,
and then go the next~$j$ and to the next point~$v$. After we exhaust all points
of the initial set~${\cV}_{k-1}$, we start the $(k+1)$st step, and so on. This process
terminates after some~$N$th step, when $\mathcal{V}_N = {\cV}_{N-1}$, i.e., no new vertices are added
to the polytope~$P_{N-1}$. In this case, $e^{\tau \tilde A}P_{N}\subset P_N$ for all $\, \tilde A \in \cA - (\beta_l + \nu)I$.
This means that~$P_N$ is $\eps$-extremal for the family~$\cA$ with
\begin{equation}\label{vareps}
\eps \quad = \quad \beta_l(\tau) \ - \ \beta(\tau) \ + \ \nu\, .
\end{equation}
If the first part of the algorithm does not terminate within finite time, then we either increase~$l$ or increase~$\nu$ and go
to the Initialization.
\smallskip

{\em The second  part. Deriving the lower and upper bounds for $\sigma (\cA)$}.
Thus, the first part of the algorithm produces an $\eps$-extremal polytope~$P_N$.
We compute $\alpha(P_N)$ by definition, as infimum of numbers~$\alpha$ such that
the vector $(A - \alpha I)w $ is directed inside $P_N$, for each vertex $w \in P_N$ and for every $A \in \cA$.
This is done by taking a small~$\delta> 0$ and solving the following LP problem:
$$
\left\{
\begin{array}{l}
\alpha \ \to \ \inf \\
w + \delta (A - \alpha I)w \, \in \, P_N,\\
\, w \in P_N, \ A \in \cA\, .
\end{array}
\right.
$$
 Proposition~\ref{p50} yields
\begin{equation}\label{result}
\beta_l(\tau) \ \le \ \sigma(\cA) \ \le \ \alpha(P_N)\, .
\end{equation}
Thus, as a result of Algorithm~(R), we obtain the lower bound $\beta_l(\tau)$ and
the upper bound~$\alpha(P_N)$ for the Lyapunov exponent.
The polytope~$P_N$ constitutes the Lyapunov norm for the family~$\cA$.
If $\alpha (P_N) < 0$, then we conclude that the system is stable and its joint Lyapunov function
is defined by the polytope~$P_N$.

\begin{algorithm}
\DontPrintSemicolon
\KwData{$\cB_{\dtt} = \e^{\dt \tilde \cA}$ (see (\ref{eq:normalized}))\; % $\nu \ge 0$ (the accuracy)\;
The starting product $\Pi$ (candidate s.m.p.) of length $n \le l$ for which the value $[\rho(\Pi)]^{1/n}$ is as large as possible for all products of length at most~$l$.
}
\KwResult{the polytope $\cP_{\dtt}$}
\Begin{
%%%\nl Preprocessing: find a product $P$ of length $k$ such that $\rho(P)^{1/k}$ is maximal
%%%among all products of length smaller or equal to $K \ge k$ (candidate s.m.p.)\;
%%%\nl Set $R := \rho(P)^{1/k}$ and $\mathring{\cB}_{\dtt} := R^{-1} \cB_{\dtt}$\;
\nl Compute the leading eigenvector of $\Pi$ and of~$n$ its cyclic permutations.
We obtain a system of vectors $\{v_1,\ldots,v_n\}$ \;
\nl Set $\cV_0 := \{ v_j \}_{j=1}^{n}$ and $\cR_0 = \cV_0$\;
\nl Set $i=0$\;
%\nl \While{${\rm span} (V_i) \neq \re^d$}{
%\nl $\cW_{i+1} = \cB_{\dtt}\, \cR_{i}$\;
%\nl $\cV_{i+1} = \cV_{i} \cup \cW_{i+1}$\;
%\nl $\cR_{i+1} = \cV_{i+1} \setminus \cV_{i}$\;
%\nl Set $i = i+1$\;
%}
% \nl $\cP_{i} = {\rm co}_{s} (\cV_i)$\;
% \nl \While{$\cB_{\dtt}\, \cV_i \not\in \cP_{i}$} {
\nl Set ${\rm term} = 0$\;
\nl \While{${\rm term} \neq 1$} {
\nl $\cW_{i+1} = \cB_{\dtt}\, \cR_{i}$\;
\nl Set $\cS_i = \emptyset$\;
\nl Set $m_i = {\rm Cardinality}(\cW_{i+1})$\;
\nl Set $n_i = {\rm Cardinality}(\cV_i)$\;
\For{$\ell=1,\ldots,m_i$}{
\nl Let $z$ the $\ell$-th element of $\cR_i$\;
\nl Check whether  $z \in {\rm co}_{s} (\cV_i)$, $\cV_i := \{ v_j \}_{j=1}^{n_i}$, i.e. solve the LP problem\;
$
\begin{array}{rcl}
\min & & f_\ell=\sum\limits_{j=1}^{n_i} \left( \lambda_j + \mu_j \right)
\\[0.25cm]
{\rm subject \ to}
     & & \sum\limits_{j=1}^{n_i} \left( \lambda_j v_j  + \mu_i (-v_j)
     \right) \!=\! z
\\[0.25cm]
{\rm and}
     & & \lambda_j \ge 0, \ \mu_j \ge 0, \quad
     j=1,\ldots,n_i.
\end{array}
$\;
\nl \If{$f_\ell > 1$}{
\nl $\cS_i = \cS_i \cup z$\;
}
}
\eIf{$\cS_i = \emptyset$}{
\nl Set ${\rm term} = 1$\;}{
\nl $\cR_i = \cS_i$\;
\nl $\cV_{i+1} = \cV_{i} \cup \cR_{i}$\;}
\nl Set $i=i+1$\;
% \nl Let $V_{i}$ an essential system of vertices of ${\rm co}_{s} ( \cV_{i-1}
% \cup \cB_{\dtt}\, \cV_{i-1})$\;
% \nl Set $\cP_{i} = {\rm co}_{s} (\cV_i)$\;
}
\nl Set $N=i$\;
\nl Return $\cP := \cP_{N} = {\rm co}_{s} (\cV_N)$ (extremal polytope)\;
}
\caption{Algorithm (R), part 1. Constructing an $\eps$-extremal polytope~$P$ \label{algoR}}

\end{algorithm}

\smallskip

\begin{algorithm}
\DontPrintSemicolon
\KwData{$\cA, \cP_{N}, \cV_{N}$ (system of vertices of $\cP_N$), $\delta$ (small positive stepsize)}
\KwResult{$\alpha$}
\Begin{
\For{$i=1,\ldots,m$}{
\nl Solve the LP problems (w.r.t. $\{t_v,s_v\}$, $\alpha_i$)
\begin{eqnarray}
\label{LP.r}
% \left\{
\begin{array}{rl}
\min & \alpha_i
\\[0.1cm]
{\rm s.t.}
     & w + \delta (A_i - \alpha_i I) w \le \sum\limits_{v \in \cV_{N}}\,
		 t_v\,v - s_v\, v \quad \forall w \in \cV_{N}
\\[0.1cm]
{\rm and}
     & \sum\limits_{v \in \cV_{N}}\, t_v + s_v \le 1, \qquad t_v, s_v \ge 0 \quad
		   \forall v \in \cV_{N}
\end{array}
% \right.
\nonumber
\end{eqnarray}
}
\nl Return $\alpha(\cP_N) := \max\limits_{1 \le i \le m} \alpha_i$ \;
}
\caption{Algorithm~(R), part 2.  Computing  the best upper bound~$\alpha(P)$ \label{algoR2}}
\end{algorithm}

\begin{theorem}\label{th40}
Algorithm~(R) terminates within finite time if one of the following conditions is satisfied:

1) $\nu \, > \, \beta(\tau)\, - \, \beta_l(\tau)$;

2) $\nu = 0$, the product $\Pi$ is dominant for the family $e^{\tau \cA}$ and
its leading eigenvalue $\lambda_{\max}$ is unique and simple.

\noindent In case 1) the distance between the lower and upper bounds in~(\ref{result})
does not exceed  $\beta_l - \beta + \nu + C\tau$; in case 2) it does not exceed $C\tau$,
where $C = C(\cA)$ is a constant independent of $\tau$.
\end{theorem}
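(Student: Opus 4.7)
The plan is to split the argument into establishing finite termination of Algorithm~(R) under each hypothesis, and then to convert the resulting $\eps$-extremality of the output polytope into the quantitative bound via Theorem~\ref{th8}. First I would observe that, after the normalization~(\ref{eq:normalized}), the inner loop of the algorithm iteratively constructs an invariant polytope for the shifted family $e^{\,\tau\tilde\cA}$, and a direct computation gives
\[
\rho\bigl(e^{\,\tau\tilde\cA}\bigr) \ = \ e^{\,\tau\bigl(\beta(\tau)\, -\, \beta_l(\tau)\, -\, \nu\bigr)}.
\]
Condition~1 thus amounts to the strict contraction $\rho(e^{\,\tau\tilde\cA})<1$; condition~2, via the fact that a dominant product is in particular an s.m.p., forces $\beta_l(\tau)=\beta(\tau)$, so that $\rho(e^{\,\tau\tilde\cA})=1$ with $\Pi$ still dominant for the shifted family.

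Termination in case~2 is then a direct invocation of the discrete-time result already cited from~\cite{GP}: dominance of the candidate product with a unique simple leading eigenvalue is precisely the criterion that forces the polytope algorithm to stop in finitely many steps, and the output polytope satisfies $e^{\,\tau\tilde A}P_N\subset P_N$ exactly, giving $\eps=0$. Termination in case~1 is by a direct contraction argument. Since $\rho(e^{\,\tau\tilde\cA})<1$, one has uniform decay $\max_{A_{j_1},\ldots,A_{j_k}\in\cA}\|e^{\,\tau\tilde A_{j_k}}\cdots e^{\,\tau\tilde A_{j_1}}\| \to 0$ as $k\to\infty$. Every candidate vertex produced at step~$k$ is of the form $e^{\,\tau\tilde A_{j_k}}\cdots e^{\,\tau\tilde A_{j_1}}v_i$ with $v_i$ from the fixed starting set $\cV_0$, so the Euclidean norms of all new candidates tend uniformly to zero with~$k$. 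On the other hand, irreducibility of $\cA$ passes to irreducibility of $e^{\,\tau\cA}$ by Lemma~\ref{l5}, and the chain of spans $L_k=\mathrm{span}(\cV_k)$ is non-decreasing and has invariant-subspace limit; hence it strictly grows until exhausting $\re^d$ in at most~$d$ steps. Thus for some finite $k_0 \le d$ the polytope $P_{k_0}$ is full-dimensional with positive in-radius $r_0>0$, and nesting gives in-radius $\ge r_0$ for every $k\ge k_0$. Any candidate $z$ being genuinely added at step $k > k_0$ must satisfy $\|z\|_2\ge r_0\,\|z\|_{P_{k-1}} > r_0$, contradicting the uniform norm decay once $k$ is large enough.

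Once termination is secured, by construction $e^{\,\tau A}P_N \subset e^{\,\tau(\beta_l+\nu)}P_N$ for every $A\in\cA$, which by~(\ref{vareps}) is exactly the statement that $P_N$ is $\eps$-extremal for $e^{\,\tau\cA}$ with $\eps = \beta_l(\tau)-\beta(\tau)+\nu$. Theorem~\ref{th8} then delivers $\alpha(P_N)-\beta(\tau)\le C\tau+\eps$, and adding $\beta(\tau)-\beta_l(\tau)\ge 0$ yields
\[
\alpha(P_N)\, -\, \beta_l(\tau)\ \le\ C\tau + \eps + \bigl(\beta(\tau)-\beta_l(\tau)\bigr) \ = \ C\tau + \nu,
\]
which is the claimed linear-in-$\tau$-and-$\nu$ bound in case~1; in case~2 one additionally has $\eps=0$ and $\beta_l=\beta$, leaving $\alpha(P_N)-\beta_l(\tau)\le C\tau$ in agreement with Corollary~\ref{c5}.

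The principal obstacle I expect is the termination proof in case~1: the norm decay of shifted products is usable only once the polytopes $P_k$ possess a uniform positive in-radius, and securing that requires separately tracking the growth of the spanning subspaces $L_k$. Passing from irreducibility of $\cA$ to the statement that $L_k$ eventually fills $\re^d$ is the delicate ingredient — it hinges on Lemma~\ref{l5} (so that $e^{\,\tau\cA}$ is irreducible for admissible $\tau$) together with an invariant-subspace argument of the kind used in Lemma~\ref{l15}. Case~2, by contrast, is essentially a black-box application of the dominance-termination result from~\cite{GP}.
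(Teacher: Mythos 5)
Your proposal is correct and follows essentially the same route as the paper's proof: case~2 is the black-box citation of the dominance-termination theorem from~\cite{GP}, case~1 combines the contraction $\rho(e^{\tau\tilde\cA})<1$ with the fact that irreducibility forces the polytopes $P_k$ to become full-dimensional, and the quantitative bound is then read off from Theorem~\ref{th8} via the $\eps$-extremality identity $\eps=\beta_l-\beta+\nu$. The only difference is that you justify the full-dimensionality claim explicitly through the growth of the spanning subspaces (in the spirit of Lemma~\ref{l15}), a step the paper merely asserts.
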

\begin{proof}
In the case 1) we have $\rho(e^{\tau \tilde A}) = e^{ (\beta - \beta_l - \nu)\tau} < 1$.
Hence, products of matrices from the family $e^{\tau \tilde A}$ tend to zero as their lengths
tend to infinity.  Therefore, for each~$r > 0$, there is $k = k(r)$ such that all points $v$
appearing in the $k$th step of Algorithm~(R) are inside the ball~$B(0, r)$. On the other hand,
the family~$e^{\tau \tilde A}$ is irreducible, and hence, for every $k \ge d$, the polytope $P_{k-1}$ 
has a nonempty interior, i.e., contains some ball~$B(0,r)$. This means that all points~$v$ generated in 
$k$th step ($k = k(r)$) are inside~$P_{k-1}$, i.e., the first part of Algorithm~(R) terminates within finite time.

In case 2) we have $\rho(e^{\tau \tilde A}) = 1$, the spectrum maximizing product~$\Pi$ is dominant,
and its leading eigenvalue~$\lambda_{\max}$ is unique  and simple. By theorem~4 of~\cite{GP} Algorithm~(R) terminates
within finite time.

Since the polytope~$P_N$ is $\eps$-extremal for $\eps = \beta_l -  \beta +  \nu$,
the upper bound for the difference $\alpha(P_N) - \beta_l(\tau)$ follows from Theorem~\ref{th8}. \qed
\end{proof}

\begin{cor}\label{c50}
If the starting product $\Pi$ is always maximal (i.e. for all $\tau$), then
the distance between the lower and upper bounds in~(\ref{result}) tends to zero as
$l\to \infty$ and $\nu \to 0\, , \, \tau \to 0$.
\end{cor}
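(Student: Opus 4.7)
The plan is to combine case~1) of Theorem~\ref{th40} with the monotone convergence $\beta_l(\tau) \to \beta(\tau)$ guaranteed by maximality of the starting product. Recall from the description of Algorithm~(R) that $\beta_l \le \beta$ for every $l$, and that whenever $\Pi$ is chosen maximal one has $\beta_l(\tau) \to \beta(\tau)$ as $l\to\infty$; this is a direct consequence of the Berger--Wang type identity $\rho(e^{\tau\cA}) \, = \, \sup_n \max_{\Pi\in(e^{\tau\cA})^n}[\rho(\Pi)]^{1/n}$, applied with $\cB = e^{\tau\cA}$. I would record this convergence as the first step.

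The core computation is a cancellation. Under the condition $\nu > \beta(\tau) - \beta_l(\tau)$, Theorem~\ref{th40} case~1) asserts that Algorithm~(R) terminates and yields an $\eps$-extremal polytope $P_N$ with $\eps = \beta_l-\beta+\nu > 0$, as in~(\ref{vareps}). Theorem~\ref{th8} then gives $\alpha(P_N) - \beta(\tau) \le C\tau + \eps$, and adding and subtracting $\beta(\tau)$ inside $\alpha(P_N)-\beta_l(\tau)$ produces
\[
\alpha(P_N) - \beta_l(\tau) \ \le \ \bigl(\beta(\tau)-\beta_l(\tau)\bigr) + C\tau + \bigl(\beta_l-\beta+\nu\bigr) \ = \ C\tau + \nu.
\]

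With this bound in hand the parameters can be chosen in a natural order. For any prescribed target $\eta > 0$: first pick an admissible $\tau > 0$ with $C\tau < \eta/2$; then pick $\nu \in (0,\eta/2)$; and finally, using the convergence $\beta_l(\tau)\to\beta(\tau)$, pick $l$ large enough that $\beta(\tau)-\beta_l(\tau) < \nu$. The hypothesis of Theorem~\ref{th40} case~1) is then satisfied, the algorithm terminates, and the displayed bound gives $\alpha(P_N)-\beta_l(\tau) < \eta$. Since $\eta$ is arbitrary, the corollary follows.

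The main subtlety, rather than a serious obstacle, is the order of the quantifiers: the constant $C$ from Theorem~\ref{th8} depends only on $\cA$, so $\tau$ may be selected independently of $l$, while $l$ must be chosen large enough depending on both $\tau$ and $\nu$ to activate condition~1). The essential new input beyond the earlier theorems is simply the maximality hypothesis, which ensures that $\beta(\tau)-\beta_l(\tau)$ can always be made smaller than any fixed positive $\nu$; the remaining two contributions $C\tau$ and $\nu$ are driven to zero by direct choice.
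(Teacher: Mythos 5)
Your proof is correct and follows essentially the same route as the paper's (one-line) argument: maximality gives $\beta_l(\tau)\to\beta(\tau)$ as $l\to\infty$, and then case~1) of Theorem~\ref{th40} together with Theorem~\ref{th8} yields the bound $C\tau+\nu$ after the cancellation you display. You have simply made explicit the quantifier ordering and the $\eps$-bookkeeping that the paper leaves implicit.
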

\begin{proof} 
Since for maximal products, we have $\beta_l \to \beta$ as $l \to \infty$, the corollary follows
by applying Theorem~\ref{th40}. \qed
\end{proof}

\subsection{An illustrative example in dimension $2$.}
\label{ex:simpleG}

Let ${\cA}=\{ A_1, A_2 \}$ with 
\begin{eqnarray*}
A_1 & = & \left(\begin{array}{rr}
   0.34657\ldots &  0.78539\ldots \\
  -0.78539\ldots &  0.34657\ldots
\end{array} \right)
\\
A_2 & = & \left(\begin{array}{rr}
   0.60459\ldots &  1.20919\ldots \\
  -1.20919\ldots & -0.60459\ldots
\end{array} \right).
\end{eqnarray*}
For $\dt = 1$ we set $\tilde\cB = \{ \tilde{B}_1, \tilde{B}_2 \} = \{ e^{A_1}, e^{A_2} \}$ with
\begin{eqnarray*}
\tilde{B}_1 = \left(\begin{array}{rr}
 1 & 1
\\
-1 & 1
\end{array} \right), \quad
\tilde{B}_2 = \left(\begin{array}{rr}
 1 & 1
\\
-1 & 0
\end{array} \right)
\end{eqnarray*}
i.e. $A_1=\logm(\tilde{B}_1)$ and $A_2=\logm(\tilde{B}_2)$.

By means of Algorithm (R) we are able to prove that the product of degree equal to $7$,
$P = \tilde{B}_1^2\,\tilde{B}_2\,\tilde{B}_1^3\, \tilde{B}_2$ is spectrum maximizing, so that
$\rho(\tilde{\cB}) = \rho(P)^{1/7} = 13.65685424\ldots$, giving the lower bound
$\beta=0.373463076\ldots$.
Then we set $\cB = e^{\cA - \beta I} = \{ B_1, B_2 \}$ 
with $B_1 = \tilde{B}_1/\rho(\tilde{\cB}), B_2 = \tilde{B}_2/\rho(\tilde{\cB})$
and apply Algorithm (R), part 1. 
 
As a result we obtain the polytope norm in Figure \ref{fig1} whose unit ball $\cP_\dt$ is a polytope with $16$ vertices.
\begin{figure}[ht]
\centering
\global\def\path{#1}\input{exgen.inp} \\[2mm]
      \caption{Polytope norm for the illustrative example with $\dt=1$. In red the vectors
			% $(A_1-\check\alpha I) v$ and in blue the vectors $(A_2-\check \alpha I) v$, 
			$B_1 v$ and in blue the vectors $B_2 v$, for $v \in V_{\dt}$,
			vertices of $\cP_{\dt}$.}
      \label{fig1}
\end{figure}

\begin{figure}[ht]
\centering
\global\def\path{#1}\input{exgen2.inp} \hskip 0.5cm \global\def\path{#1}\input{exgen3.inp} \\[2mm]
      \caption{Left picture. In red the vectors $(A_1-\alpha I) v$ and in blue the vectors $(A_2-\alpha I) v$,
			for $v \in V_{\dt}$, vertices of $\cP_{\dt}$. Right picture: zoom of the vectorfield (in blue) tangent to 
			the boundary of the polytope}
      \label{fig1b}
\end{figure}

Applying Algorithm (R), part 2, we obtain the optimal shift $\gamma = 0.433445\ldots$ so that we have the estimate
$$
\beta = 0.373463076\ldots \le \sigma \le 0.80690807\ldots = \alpha.
$$
Figure \ref{fig1b} illustrates the fact that the computed polytope $\cP_\dt$ is positively invariant for the shifted family
$\cA - \alpha I$. 

As we expect $\alpha   = 0.80690807\ldots$ cannot be improved since one of the vectorfields $(\cA - \alpha I) v$ is tangential to the boundary of the polytope (see Figure \ref{fig1b} (right)), according to the fact that we have solved the optimization problem in Algorithm (R), part 2.

Note that we can easily increase the accuracy of the approximation.
For example, choosing the smaller dwell time $\dt=1/8$ we obtain a polytope with $80$ vertices which gives the following interval
$$
\beta = 0.385225559\ldots \le \sigma \le 0.438159379\ldots = \alpha.
$$

\section{Stability of positive linear switching systems}

In this section we analyze positive continuous-time LSS. Usually, they are defined in the
literature as systems with all trajectories~$x(t)$ in the positive orthant~$\re^d_+$, provided~$x(0) \in \re^d_+$.
This is equivalent to say that all matrices~$A \in \cA$ are Metzler, i.e., all off-diagonal entries of~$A$ are nonnegative.
Such LSS are applied, for example, in the consensus problem of multiagent systems and in cooperative systems.
Their properties have been thoroughly analyzed in the literature, see \cite{AR,FM,FV2,SH} and references therein.

For the sake of generality, we consider LSS  that are positive with respect to an arbitrary cone~$K \subset \re^d_+$, rather than the special case~$K = \re^d_+$. For criteria on LSS to be positive with respect to some cone and for special properties of such systems,
see~\cite{EMT,P2,RSS,Van,W}.
To avoid confusions with the standard notation for positive LSS (positive matrix, Metzler matrix, etc.)
that are used in the literature in the case $K = \re^d_+$, in this section we deal with families of linear operators
instead of families of matrices.
Only in the  case $K = \re^d_+$, we assume the basis in~$\re^d$ to be fixed, and deal with corresponding matrices
 $A \in \cA$. First of all, we
formulate and prove Theorem~\ref{th10} on the existence of a monotone invariant norm for a positive system.
This result strengthens Theorem~A for systems positive with respect to a cone~$K$: it relaxes the irreducibility assumption
for operators from~$\cA$ and states the monotonicity of the invariant norm. Then we use this fact to establish special analogues of Theorem~B and of  Theorem~\ref{th8} for positive systems. This enables us to
derive a modification of Algorithm~(R) for positive LSS, which works
more efficiently and under  weaker assumptions.

\subsection{Invariant cones and $K$-Metzler operators}

We begin with extending well-known notions and results on positive systems to the case of arbitrary cone~$K$,
then we formulate the main result of this subsection, Theorem~\ref{th10}.

Let $K \subset \re^d$ be a cone. In the sequel every cone is assumed to be convex, closed, solid, pointed, and
with an apex at the origin. The dual cone~$K^*$ is defined in a standard way:
\begin{equation}\label{dual}
K^* \quad = \quad \bigl\{\, y \in \re^d \quad \bigl| \quad \inf_{x \in K}(y, x) \, \ge \, 0 \, \bigr\}\, .
\end{equation}
By $\partial_K \, M$ and ${\rm int}_K\, M$ we denote the boundary and the interior respectively
of a set $M \subset K$ in the topology of the cone~$K$.
\begin{defi}\label{d10}
Let a cone $K \subset \re^d$ be given. A linear operator $A$ in $\re^d$ is called Metzler with respect to $K \subset \re^d$
(or, in short notation, $K$-Metzler) if there is $h > 0$ such that $(I+hA)K \subset K$.
\end{defi}
 A vector $x$ is $K$-nonnegative ($x \, \ge_K \, 0$) if it belongs to this cone, and an operator $A$ is $K$-nonnegative ($A \, \ge_K \, 0$) if it leaves the cone $K$ invariant.
If $I+hA \, \ge_K \, 0$, then $I+tA\, \ge_K \, 0$ for all $t \in (0, h]$. Indeed,
$\, I + tA \, = \, \frac{h-t}{h}\, I\, + \,    \frac{t}{h}\bigl(I+hA \bigr)\, \ge_K \, 0$,
since the both terms are $K$-nonnegative.
In the sequel of this section we assume a cone $K$ to be fixed, and write ``nonnegative'' and ``Metzler''
instead of ``$K$-nonnegative'' and ``$K$-Metzler'' respectively.
We start with two simple lemmas that are
well-known for the case $K = \re^d_+$.

\begin{lemma}\label{l10}
Let $K$ be an arbitrary cone. If an operator $A$ is Metzler, then the operator $e^{\, tA}$ is nonnegative for every $t > 0$.
\end{lemma}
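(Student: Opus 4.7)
The plan is to reduce the problem to the limit representation of the matrix exponential
$$
e^{\, tA} \ = \ \lim_{n \to \infty}\, \Bigl(\, I \, + \, \frac{t}{n}\, A\, \Bigr)^{\, n},
$$
and to show that each factor in the approximating product leaves the cone~$K$ invariant for $n$ large enough, so that the limit does as well by closedness of~$K$.

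More concretely, I would argue as follows. Since $A$ is $K$-Metzler, there is an $h > 0$ with $(I+hA)K \subset K$. The observation recorded just before the lemma (the convex combination argument that shows $I + sA \ge_K 0$ for every $s \in (0, h]$) then gives $(I + sA)K \subset K$ for all such $s$. Fix $t > 0$ and choose $N \in \n$ so large that $t/n \in (0, h]$ for every $n \ge N$. For each such $n$, the operator $(I + (t/n)A)$ is $K$-nonnegative, hence so is its $n$-th power, and therefore $(I + (t/n)A)^n K \subset K$.

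The only remaining step is to pass to the limit. Since $K$ is closed (part of the standing assumption on cones in this section), for any $x \in K$ the sequence $(I + (t/n)A)^n x$ lies in~$K$ and converges in $\re^d$ to $e^{\, tA} x$, which therefore also belongs to~$K$. Hence $e^{\, tA} K \subset K$, i.e., $e^{\, tA}$ is $K$-nonnegative. I do not foresee a genuine obstacle here: the argument is a direct transfer of the classical $K = \re^d_+$ proof, and the only point that truly needs the hypothesis ``$K$ is closed'' (rather than just convex and pointed) is the final passage to the limit.
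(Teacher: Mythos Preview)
Your proof is correct and follows exactly the same approach as the paper: use the limit formula $e^{\,tA}=\lim_{n\to\infty}(I+\tfrac{t}{n}A)^n$, note that each factor is $K$-nonnegative for large~$n$ by the convex-combination observation preceding the lemma, and pass to the limit using closedness of~$K$. The paper's version is just a one-line compression of your argument.
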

\begin{proof} 
We have $e^{\, tA}\,= \, \lim\limits_{n \to \infty}\bigl(I \, + \, \frac{t}{n}\, A \bigr)^n \, \ge_K \, 0$,
because $\, I \, + \, \frac{t}{n}\, A\, \ge_K\, 0$ for all large $n$. \qed
\end{proof}

\begin{lemma}\label{l20}
Let $K$ be an arbitrary cone. If a compact set $\cA$
consists of Metzler operators, then for every trajectory of~(\ref{main}) such that $x_0 \in K$
we have $x(t) \in K\, , \ t \ge 0$.
\end{lemma}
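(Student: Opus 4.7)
The plan is to reduce to the case of piecewise constant controls, where the conclusion follows immediately from Lemma~\ref{l10}, and then pass to the general measurable case by a standard $L_1$-approximation argument.

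\textbf{Step 1 (piecewise constant controls).} Suppose first that $A(\cdot)$ is piecewise constant, taking the values $A_1, \ldots, A_n \in \cA$ on consecutive intervals $[t_{k-1}, t_k]$ with $0 = t_0 < t_1 < \cdots < t_n$. On each interval the trajectory is given explicitly by $x(t) = e^{(t - t_{k-1}) A_k}\, x(t_{k-1})$ for $t \in [t_{k-1}, t_k]$. Since each $A_k$ is $K$-Metzler, Lemma~\ref{l10} yields $e^{(t - t_{k-1}) A_k} \ge_K 0$ for all $t \ge t_{k-1}$, so that $x(t_{k-1}) \in K$ implies $x(t) \in K$ on the whole subinterval. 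Starting from $x(0) = x_0 \in K$ and inducting on $k$, we conclude $x(t) \in K$ for every $t \ge 0$.

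\textbf{Step 2 (general measurable controls).} Given an arbitrary control $A(\cdot) \in \cU$, one approximates it in $L_1([0,T])$ by a sequence of piecewise constant controls $A_j(\cdot)$ with values in $\cA$, for each $T>0$. This is possible because $\cA$ is compact and measurable functions with values in a compact set are $L_1$-dense in the piecewise constant ones (one may, for instance, mollify via conditional expectations on a refining partition of $[0,T]$). Let $x_j(\cdot)$ be the solution of~(\ref{main}) associated with $A_j(\cdot)$ and $x_0$. By the standard continuous dependence of solutions of linear ODEs on the coefficients in $L_1$, we have $x_j(t) \to x(t)$ uniformly on $[0,T]$ as $j \to \infty$. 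Step~1 gives $x_j(t) \in K$ for all $t \in [0,T]$ and all $j$, and closedness of $K$ then yields $x(t) \in K$ for $t \in [0,T]$. Since $T$ was arbitrary, the claim follows.

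\textbf{Main obstacle.} The only delicate point is the approximation in Step~2: one must be sure that the $L_1$-approximation of a measurable switching law by piecewise constant ones can be arranged so as to keep the values inside $\cA$ (not merely inside its convex hull). Once this is done, continuous dependence of the solution on the coefficients is routine for linear ODEs with bounded measurable coefficients, and the closedness of $K$ takes care of the limit. An alternative route, avoiding any approximation, is to use a uniform $h > 0$ such that $(I + h A)K \subset K$ for all $A \in \cA$ (which exists by compactness of $\cA$ together with the Metzler property) and to proceed by a Euler-type contradiction argument at the first time $x(t)$ would exit $K$; but the approximation route is conceptually simpler given Lemma~\ref{l10}.
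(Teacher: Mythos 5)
Your argument is correct and follows essentially the same route as the paper: the paper likewise handles piecewise-constant controls via the product of exponentials $e^{\frac{t}{n}A(\cdot)}\cdots e^{\frac{t}{n}A(0)}x_0$ (each factor $K$-nonnegative by Lemma~\ref{l10}), approximates a general control in $L_1$ by sampled piecewise-constant ones with values in $\cA$, and concludes by continuous dependence and closedness of $K$. The approximation concern you flag is harmless in any case, since values in ${\rm co}\,(\cA)$ would also suffice (a convex combination of $K$-Metzler operators is again $K$-Metzler).
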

\begin{proof}  
Fix an arbitrary $t > 0$. Every control function
can be approximated on the segment $[0,t]$
by piecewise-constant functions $ A^{(n)}(\cdot)$ with the nodes $\bigl\{\frac{kt}{n}\, , \, k = 1, \ldots , n-1\bigr\}$
so that $\|A^{(n)} - A\|_{L_1[0,t]} \to 0$ as $n \to \infty$. Whence, $\|x^{(n)} - x\|_{C[0,t]}\, \to \, 0$
as $n \to \infty$. Since
$$
 x^{(n)}(t)\  = \ e^{\frac{t}{n}\, A\bigl(\frac{t(n-1)}{n}\bigr)}\cdots
e^{\frac{t}{n}\, A\bigl(\frac{t}{n}\bigr)}e^{\frac{t}{n}\, A\bigl(0\bigr)}\, x_0
$$
and all the exponents in this product are $K$-nonnegative (Lemma~\ref{l10}), it follows that $x^{(n)}(t) \in K$.
The limit passage as $n \to \infty$ concludes the proof. \qed
\end{proof}

\begin{cor}\label{c10}
If a compact set $\cA$
consists of Metzler operators, then for every control function $A(\cdot)$
inequality $y_0 \ge_K x_0$ implies $y(t) \ge_K x(t)$ for every $t \ge 0$.
\end{cor}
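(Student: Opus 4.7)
The plan is to reduce this to Lemma~\ref{l20} by exploiting the linearity of the system. Given a control function $A(\cdot)$ and two trajectories $x(\cdot)$ and $y(\cdot)$ of (\ref{main}) sharing this control, with initial data $x_0, y_0$ satisfying $y_0 \ge_K x_0$, I would form the difference $z(t) = y(t) - x(t)$. By linearity of the ODE, $z(\cdot)$ itself satisfies
\[
\dot z(t) \ = \ A(t)\, z(t)\,, \qquad z(0) \ = \ y_0 - x_0\, ,
\]
so $z(\cdot)$ is again a trajectory of (\ref{main}) driven by the same control $A(\cdot)\in\cU$.

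Next I would observe that the hypothesis $y_0 \ge_K x_0$ means precisely that $z(0) \in K$. Since $\cA$ consists of Metzler operators with respect to $K$, Lemma~\ref{l20} applies to the trajectory $z(\cdot)$: it yields $z(t) \in K$ for every $t \ge 0$. Translating back, this is exactly $y(t) - x(t) \ge_K 0$, i.e.\ $y(t) \ge_K x(t)$ for all $t \ge 0$, which is the claim.

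There is essentially no obstacle here: the only point worth checking is that the linear combination (or difference) of two trajectories of (\ref{main}) with the same control $A(\cdot)$ is again a trajectory of the same equation, which is immediate from the linearity of $\dot w = A(t) w$ in the state variable. Everything else is a direct invocation of Lemma~\ref{l20}. This is why the statement is formulated as a corollary rather than a theorem.
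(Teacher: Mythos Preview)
Your proof is correct and follows exactly the paper's approach: apply Lemma~\ref{l20} to the difference $y(t)-x(t)$, which is itself a trajectory by linearity with initial value in~$K$.
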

\begin{proof} 
Is by applying Lemma~\ref{l20} to the function $y(t) - x(t)$. \qed 
\end{proof}

For $K$-positive families, the irreducibility condition imposed in the main results of Section~2 can be
relaxed to {\em $K$-irreducibility.}  Let us first introduce some further notation.
A {\em face} of a cone~$K$ is the intersection of~$K$ with a hyperplane passing through the apex.
The apex is a face of dimension~$0$, this is a {\em trivial} face, all others are nontrivial.
All generatrices are faces of dimension~$1$. A {\em face plane} is a linear span of a face.

\begin{defi}\label{d20}
A $\, K$-Metzler operator is called irreducible with respect to~$K$ (in short, $K$-irreducible, or positively irreducible,
if the cone $K$) is fixed
if it has no invariant subspace among the nontrivial face planes of~$K$.
A family of $K$-Metzler operators~$\cA$ is $K$-irreducible if there is no nontrivial face plane of~$K$ invariant for all operators from~$\cA$.
\end{defi}
Thus, the $K$-irreducibility property is much weaker than just irreducibility. A $K$-irreducible family of operators
may have common invariant subspaces, but not among the proper faces of~$K$. In particular, in dimension~$d \ge 3$
there are no irreducible operators, while $K$-irreducible ones, of course, exist.
The $K$-irreducibility may be verified by the
following simple criterion.
\begin{prop}[~\cite{Van}] \label{p15}
A Metzler operator is irreducible with respect to a given cone~$K$ if and only if it does not have
eigenvectors on the boundary of~$K$.
\end{prop}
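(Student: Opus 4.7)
The plan is to prove both implications by exploiting the interplay between the Metzler property of $A$ (which makes $e^{\,tA}$ leave $K$ invariant by Lemma~\ref{l10}) and the combinatorial structure of the faces of $K$. Throughout, recall that a face $F$ of $K$ is characterized by the implication $x \in F,\ x - y \in K,\ y \in K \ \Rightarrow\ y \in F$, and that $F = K \cap {\rm span}\,(F)$ for any face $F$.

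\smallskip

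\emph{Necessity ($A$ $K$-irreducible $\Rightarrow$ no eigenvector on $\partial K$).}
First I would argue by contraposition. Assume $Av = \lambda v$ for some $v \in \partial_K K$, $v\neq 0$, and let $F$ be the smallest face of $K$ containing $v$; then $v$ lies in the relative interior of $F$. For any $w \in F$, by the relative interior property there exists $c>0$ with $cv - w \in K$. Since $e^{\,tA}$ is $K$-nonnegative (Lemma~\ref{l10}) and $e^{\,tA}v = e^{\,t\lambda} v$, we obtain
\[
e^{\,t\lambda} c\, v \ - \ e^{\,tA} w \ = \ e^{\,tA}\bigl(c v - w\bigr) \ \in \ K,
\qquad e^{\,tA}w \in K.
\]
The face property applied to $e^{\,t\lambda} c v \in F$ now forces $e^{\,tA} w \in F$. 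Hence $e^{\,tA}$ leaves $F$ invariant for all $t\ge 0$, so it leaves ${\rm span}\,(F)$ invariant, and differentiating at $t=0$ yields $A\,{\rm span}\,(F)\subset {\rm span}\,(F)$. Since $F$ is a nontrivial proper face, ${\rm span}\,(F)$ is a nontrivial face plane, contradicting the $K$-irreducibility of $A$.

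\smallskip

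\emph{Sufficiency (no eigenvector on $\partial K$ $\Rightarrow$ $A$ $K$-irreducible).}
Again by contraposition: suppose $A$ leaves some nontrivial face plane ${\rm span}\,(F)$ invariant, where $F$ is a proper nontrivial face of $K$. Then $(I+hA)$ leaves ${\rm span}\,(F)$ invariant, and since it also leaves $K$ invariant for small $h>0$ (Metzler property), it leaves $F = K \cap {\rm span}\,(F)$ invariant. Consequently the restriction $A|_{\,{\rm span}\,(F)}$ is a Metzler operator with respect to the cone $F$, which is pointed and solid inside ${\rm span}\,(F)$. The classical Perron–Frobenius/Krein–Rutman theorem applied to a large enough nonnegative shift $A|_{\,{\rm span}\,(F)} + \mu I$ produces an eigenvector $u \in F$; this $u$ is also an eigenvector of $A$, and $u \in F \subset \partial_K K$, as required.

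\smallskip

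\emph{Main obstacle.} The delicate step is the forward direction: one needs the fact that the smallest face containing $v$ really does absorb $e^{\,tA}w$ for every $w \in F$. The argument relies on two nonobvious ingredients working together—the defining ``absorption'' property of a face ($x\in F$, $x-y,y\in K\Rightarrow y\in F$) and the fact that $v$ lies in the relative interior of the smallest face containing it. Once this is in place, translating the invariance of $e^{\,tA}$ on $F$ to the invariance of $A$ on ${\rm span}\,(F)$ is routine (derivative at $0$), and the reverse direction is a standard application of Perron–Frobenius to the restricted operator.
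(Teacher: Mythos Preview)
Your proof is correct. Note that the paper does not actually supply a proof of this proposition; it is stated with a citation to Vandergraft~\cite{Van}, so there is no ``paper's own proof'' to compare against. Your argument follows the standard route: for necessity you use that the minimal face containing the boundary eigenvector is absorbed by the $K$-nonnegative semigroup $e^{\,tA}$ (via the face property and Lemma~\ref{l10}), then differentiate at $t=0$; for sufficiency you restrict to the invariant face plane and invoke Krein--Rutman on a nonnegative shift. One small remark: the paper's working definition of ``face'' (intersection of $K$ with a hyperplane through the apex) is slightly looser than the convex-analytic one you use, but your minimal face $F$ is always contained in an exposed face, and in Vandergraft's original setting the two notions of face plane coincide for the purposes of this statement, so no harm is done.
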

\begin{remark}\label{r20}
{\rm In case~$K = \re^d_+$, the $K$-irreducibility, or positive irreducibility,
 means that the operators have no common invariant coordinate
subspaces (subspace spanned by several vectors of the canonical basis), or, which is the same, the matrices are not similar via a permutation to  block upper triangular matrices (with more than one block).}
\end{remark}

\begin{defi}[monotone norm]
A norm on a cone~$K$ is called {\em monotone} if for every $x, y \in K$ the inequality
 $x \ge_K y$  implies $\|x\| \ge  \|y\|$. 
\label{def:monorm}
\end{defi}

The aim of this subsection is to sharpen Barabanov's theorem
 for Metzler operators with a cone~$K$. We prove that in this case there exists an invariant
 norm that is  {\em monotone} with respect to~$K$. Moreover, the irreducibility
 assumption can be now weakened to $K$-irreducibility. Thus, even if the operators
 share common invariant subspaces, they have an invariant norm, unless one of those subspaces
 is a face plane for~$K$. The proof of the first assertion (monotonicity) is rather simple, it can be derived from
 Barabanov's theorem.  The second part (relaxing the irreducibility condition) is more delicate.
 To realize it we need actually to derive an independent proof, not relying on Theorem~A, although
 using some ideas of its proof.

 The extremal norm on a cone $K$ for $K$-Metzler operators is defined in the same way as
 in Definition~\ref{d5}. The only difference is that now we consider only those trajectories
 starting in the cone~$K$ (and hence, entirely lying in~$K$). The definition of invariant norm
 on a cone~$K$ also stays the same, we only write
 $x_0 \in K$ instead of $x_0 \in \re^d$.
 \begin{theorem}\label{th10}
 Every  $K$-irreducible set of Metzler operators possesses an invariant monotone norm on the cone~$K$.
 \end{theorem}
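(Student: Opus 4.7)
The plan is to adapt Barabanov's classical construction to the cone setting, and then verify that the resulting norm already exists under the weaker $K$-irreducibility hypothesis. First I normalize using~(\ref{plus}): replacing $\cA$ by $\cA - \sigma I$ does not affect the off-diagonal entries of the operators (so the $K$-Metzler property is preserved) nor the lattice of invariant subspaces (so $K$-irreducibility is retained), hence I may assume $\sigma(\cA) = 0$. I then pick any linear functional $\phi$ in the interior of the dual cone $K^*$, which exists because $K$ is solid; the map $x \mapsto \phi(x)$ is strictly positive on $K\setminus \{0\}$, monotone in the order $\ge_K$, and comparable to the Euclidean norm on $K$ from both sides.

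I propose the Barabanov-style candidate
\begin{equation*}
f(x) \ = \ \sup \bigl\{\, \phi(x(t)) \ \bigm|\ t \ge 0,\ A(\cdot) \in \cU,\ x(0) = x \,\bigr\}, \qquad x \in K,
\end{equation*}
where $x(\cdot)$ denotes the generalized trajectory of~(\ref{main}) with control $A(\cdot)$. Three of the four required properties are essentially immediate. Positive homogeneity is clear from linearity of the flow; subadditivity $f(x+y) \le f(x) + f(y)$ follows because the trajectory from $x+y$ under a given control equals the sum of the trajectories from $x$ and $y$ under the same control, combined with linearity of $\phi$; monotonicity follows from Corollary~\ref{c10} applied with a common control together with monotonicity of $\phi$; and $f$ is non-increasing on every trajectory by concatenation of controls.

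The main obstacle, as I expect, is verifying that $f$ is finite and strictly positive on $K\setminus \{0\}$, since $\sigma(\cA) = 0$ alone rules out exponential growth but not polynomial blow-up. Here $K$-irreducibility is essential. If $f(x_0) = +\infty$ for some $x_0 \in K$, there are trajectories $x_n(\cdot)$ starting at $x_0$ and times $t_n \to \infty$ with $c_n := \phi(x_n(t_n)) \to \infty$; rescaling $y_n(\cdot) := x_n(\cdot)/c_n$ yields unit-normalized trajectories in $K$ whose initial points tend to $0$, and a diagonal Arzela--Ascoli extraction in both time and control (the latter via weak-$*$ compactness of $\cU$, using compactness of $\cA$) produces a limiting trajectory whose initial direction lies on a proper face of $K$. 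The linear span of the $\cA$-orbit of that direction is then a proper face plane invariant under all operators from $\cA$, contradicting $K$-irreducibility. A symmetric argument excludes $f(x_0) = 0$ for $x_0 \ne 0$, since the set $\{y \in K : f(y) = 0\}$ would otherwise span a proper invariant face plane.

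To obtain invariance in the sense of Corollary~\ref{c20}, I fix $x_0 \in K$ and pick controls $A_n(\cdot)$ together with times $t_n$ such that $\phi(x_n(t_n)) \to f(x_0)$. Weak-$*$ compactness of $\cU$ yields a subsequence of $A_n(\cdot)$ converging to some $\bar A(\cdot)$ whose associated trajectory $\bar x(\cdot)$ is the uniform-on-compacts limit of $x_n(\cdot)$. Combining the non-increase of $f$ along trajectories (established above) with the fact that $f(x_0)$ is approached along $\bar x(\cdot)$ arbitrarily far out in time forces $f(\bar x(t)) \equiv f(x_0)$ for every $t \ge 0$. This is the required invariance; monotonicity of $f$ was already established, which completes the proof.
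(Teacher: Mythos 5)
Your overall strategy --- the Barabanov-type supremum functional $f(x)=\sup_{t,A(\cdot)}\phi(x(t))$ with $\phi\in{\rm int}\,K^*$ --- is exactly the paper's starting point, and your treatment of homogeneity, convexity, monotonicity (via Corollary~\ref{c10}) and non-increase along trajectories is fine. But two of your steps have genuine gaps.

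First, the invariance step does not work for the supremum functional itself. The identity $\sup_{A\in\cU[0,\tau]}f(x(\tau))=f(z)$ can fail: if the supremum over $t$ in the definition of $f(z)$ is attained at some finite $t^*<\tau$, then every trajectory loses value by time $\tau$, so there is no reason that $f(x_0)$ is ``approached arbitrarily far out in time.'' Moreover, your weak-$*$ limit $\bar x(\cdot)$ is only a locally uniform limit of the $x_n(\cdot)$, while the near-optimality $\phi(x_n(t_n))\to f(x_0)$ is witnessed at times $t_n$ that may escape to infinity; this gives no control of $f(\bar x(t))$ at any fixed $t$ (and the semicontinuity of the convex function $f$ on $\partial K$ goes the wrong way for the inequality you need). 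The paper circumvents this by first establishing that $f$ is non-increasing along trajectories and then passing to the \emph{monotone limit} $F(z)=\lim_{t\to\infty}l(z,t)$, for which the dynamic-programming identity~(\ref{equiv}) holds by construction; attainment of the finite-horizon optimum is then obtained from a linear optimal control problem (choosing $a$ in the subdifferential of $F$ at a cluster point, existence via Filippov), and a single control valid for all time comes from intersecting the nested weak-$*$ compact sets $\cQ_\tau$. Since you never form $F$, you also never face --- and never prove --- the positivity of the limit functional, which in the paper requires a second irreducibility-plus-contraction argument (Step~4).

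Second, your finiteness argument is not valid as stated. After rescaling, $y_n(0)=x_0/c_n\to 0$, so any Arzel\`a--Ascoli limit on compact time intervals is the zero trajectory; the normalization $\phi(y_n(t_n))=1$ lives at times $t_n\to\infty$ and is lost in the limit, and the ``initial direction'' is the fixed ray through $x_0$, which need not lie on a proper face. The correct route is structural: the set $\cL=\{x\in K: f(x)<\infty\}$ is, by convexity, homogeneity and monotonicity of $f$, either all of $K$ or a face of $K$ whose linear span is invariant under every $A\in\cA$ (if $A_0z\notin{\rm span}\,\cL$ for some $z\in\cL$ then $e^{tA_0}z\notin{\rm span}\,\cL$ for small $t$, forcing $f(z)=+\infty$); $K$-irreducibility then leaves only $\cL=K$ or $\cL=\{0\}$, and the latter is excluded not by irreducibility but by a compactness argument on the unit sphere of $K$ producing a uniform doubling time $N$ and hence $\sigma(\cA)\ge\frac{\ln 2}{N}>0$, contradicting the normalization $\sigma=0$. (Strict positivity of $f$ itself is free, since $f(x)\ge\phi(x)>0$; the delicate positivity question arises only for the limit functional $F$.)
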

 \begin{remark}\label{r30}
{\rm This fact, in comparison with Barabanov's theorem (Theorem~A) applied to positive operators, has two advantages:
it ensures the existence of a} $K$-monotone {\em invariant norm and, which is more important, it relaxes the assumptions on the set of operators to $K$-irreducibilty.}
\end{remark}

 The proof of Theorem~\ref{th10} is fairly technical. It is placed in Appendix and split into four steps.
 In the first two steps we construct an extremal norm on~$K$, using the compactness argument and involving the
 $K$-irreducibility assumption. In the last two steps we use convex optimal control theory to show the existence of a
 generalized trajectory on the unit sphere, which means that this extremal norm is invariant.

% \smallskip

Theorem~\ref{th10} implies, in particular, an analogue of Theorem~B for $K$-positive systems.
To formulate it we need to extend definitions of some notation from Section~2 to this case.

A  convex set $G \subset K$ is called {\em monotone} with respect to~$K$ if
$x \in G, y \le_K x \, \Rightarrow \, y \in G$.
 For a given monotone convex set $G \subset K$, we say that
the vector $Ax$  at the point $x \in G$ {\em is directed inside} $G$, if
there is a number $\eta > 0$ such that $x \, + \, \eta \, Ax \, \in \, {\rm int}_K\, G$.
The proof of the following fact is the same as the proof of Theorem~B
(applying Theorem~\ref{th10} instead of Theorem~A), and we omit it
\begin{prop}\label{p30}
A family of $K$-Metzler operators~$\cA$ is stable if and only if
 there exists a convex monotone body~$G \subset \re^d$ such that
 at every point~$x \in \partial_K \, G$ the vector $Ax$ is directed inside~$G$, $A \in \cA$.
\end{prop}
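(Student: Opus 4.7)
The plan is to adapt the proof of Theorem~B to the positive setting by replacing Barabanov's theorem (Theorem~A) with its monotone counterpart Theorem~\ref{th10}, and by replacing symmetric convex bodies with bodies that are monotone with respect to~$K$. Both implications then fall out by essentially the same geometric arguments used in the symmetric case, together with a few cone-theoretic adjustments.

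\emph{Sufficiency.} Given such a $G$, I would take its Minkowski functional $\mu_G(x) = \inf\{\lambda>0 : \lambda^{-1}x\in G\}$ on $K$. Monotonicity of $G$ makes $\mu_G$ a monotone, positively homogeneous, convex function, and the inward-vector hypothesis on $\partial_K G$, combined with compactness of~$\cA$ and a standard continuity/homogeneity argument, yields a uniform constant $c>0$ with $\mu_G(e^{tA}x)\le e^{-ct}\mu_G(x)$ for every $A\in\cA$ and all sufficiently small $t>0$. Consequently $\mu_G(x(t))\to 0$ along every trajectory starting in~$K$. For an arbitrary $x_0\in\re^d$, I would split $x_0 = u_0 - v_0$ with $u_0,v_0\in K$; by linearity of the flow and Lemma~\ref{l20} the associated trajectories stay in~$K$ and decay to zero, so $x(t)\to 0$ and $\cA$ is stable.

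\emph{Necessity.} The key move is a small positive shift. Given $\sigma := \sigma(\cA)<0$, I would pick $\eps\in(0,-\sigma)$ and form $\cA' = \cA + \eps I$. Adding a scalar multiple of $I$ preserves both the Metzler property and the lattice of common invariant face planes, and by~(\ref{plus}) gives $\sigma(\cA') = \sigma + \eps<0$, so Theorem~\ref{th10} supplies an invariant monotone norm $\|\cdot\|_I$ on~$K$ for~$\cA'$. Its closed unit ball in~$K$ is my candidate~$G$, which is a monotone convex body in the sense of Definition~\ref{def:monorm}. To verify the inward-vector condition, I would use that $\|\cdot\|_I$ is extremal for~$\cA'$ with $\sigma(\cA')\le 0$, so $e^{t(A+\eps I)}$ is nonexpansive in $\|\cdot\|_I$; the identity $e^{tA} = e^{-\eps t}e^{t(A+\eps I)}$ then gives $\|e^{tA}x\|_I\le e^{-\eps t}\|x\|_I$, and passing to the one-sided derivative at $t=0^+$ is exactly the statement that $Ax$ is directed strictly inside~$G$ at every $x\in\partial_K G$.

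The main obstacle I anticipate is that Theorem~\ref{th10} requires $K$-irreducibility, whereas Proposition~\ref{p30} as stated imposes no such hypothesis. To cover the reducible case I would induct on the dimension along a nontrivial common invariant face plane of~$K$: the restriction of~$\cA$ to the face is Metzler with respect to a lower-dimensional cone, the quotient family is Metzler with respect to the image cone, and the monotone bodies produced on each factor can be recombined by a suitably weighted Minkowski-type sum to give a monotone body in~$K$ with the required inward-vector property.
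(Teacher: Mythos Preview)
Your approach is essentially the paper's own: the paper omits the proof and says only that it is ``the same as the proof of Theorem~B (applying Theorem~\ref{th10} instead of Theorem~A),'' and your necessity argument via the $\eps$-shift and the unit ball of the invariant monotone norm, together with the standard Minkowski-functional argument for sufficiency (with the $K$-splitting $x_0=u_0-v_0$ to pass from $K$ to all of $\re^d$), is exactly that template.

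You are right to flag the $K$-irreducibility hypothesis of Theorem~\ref{th10}: Proposition~\ref{p30} is stated without it, so a direct appeal to Theorem~\ref{th10} leaves a gap that the paper's one-line sketch glosses over (the same issue is already latent in the claim that Theorem~B is a corollary of Theorem~A). Your proposed fix by induction along a common invariant face plane is the natural route; the recombination step---gluing a monotone body on the face with one on the quotient---is the only place that needs real care, and ``a suitably weighted Minkowski-type sum'' is more of a slogan than an argument. A cleaner alternative, which avoids the irreducibility detour entirely, is to note that for the necessity direction one does not need an \emph{invariant} norm but only a monotone \emph{Lyapunov} norm on~$K$, and such a norm can be built directly for any stable $K$-Metzler family (e.g.\ $\varphi(x)=\sup_{t\ge 0}\sup_{A(\cdot)}\|x(t)\|$ for a monotone norm $\|\cdot\|$ on~$K$, finite because $\sigma<0$), after which your $\eps$-shift argument goes through verbatim.
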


\subsection{Monotone polytopes and corresponding Lyapunov functions}

 Let $K$ be a cone. For a given set $M \subset K$,
  we denote its {\em monotone convex hull} as
\begin{equation}
{\rm co}_{-}(M)\ = \ \bigl( \, {\rm co}(M)\, - \, K\, \bigr)\, \cap \, K\ = \
\bigl\{ \, x \, \in \, K\ \bigl| \ x = y \, - \, z\, , \ y \, \in \,
{\rm co}\, (M)\, , \   z \in K\bigr\}\,
\label{eq:cominus}
\end{equation}
A monotone convex hull of a finitely many points is called a {\em monotone polytope} or a {\em $K$-polytope}.
Each of those points is a vertex of~$P$, unless it is in a monotone convex hull of the
remaining points. Thus, a monotone polytope is a monotone convex hull of its vertices.
In contrast to usual polytopes, a monotone polytope may have one vertex and be full-dimensional.

Every monotone polytope defines a monotone norm on~$K$. We use this norm as a joint
Lyapunov function of operators from~$\cA$ on the cone~$K$, which gives us bounds
for the Lyapunov exponent. Those bounds are similar to
those defined in subsection~2.2. We use the same lower bound~$\beta (\tau) = \tau^{-1}\ln \rho (e^{\tau A})$.
The upper bound $\alpha(P)$ is also defined in the same way, by formula~(\ref{def-alpha}), but
only for a monotone polytope~$P$.

\begin{prop}\label{p50-}
For an arbitrary compact family~$\cA$ of $K$-Metzler operators, for each number $\tau > 0$ and a
monotone polytope~$P$, we have
\begin{equation}\label{lu-}
\beta (\tau) \ \le \  \sigma \ \le \ \alpha (P)\, .
\end{equation}
\end{prop}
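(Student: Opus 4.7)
The plan is to mimic the proof of Proposition~\ref{p50} with Proposition~\ref{p30} replacing Theorem~B and with extra care to handle the monotone polytope structure on the cone~$K$. For the lower bound $\beta(\tau) \le \sigma$ the argument transfers essentially verbatim: pick $\eps > 0$ and $x_0 \in \re^d$ outside every proper common invariant subspace of $e^{\tau \cA}$, apply Proposition~D to $\cB = e^{\tau \cA}$ to get
\[
\max_{A_1,\ldots,A_k \in \cA}\bigl\|e^{\tau A_k}\cdots e^{\tau A_1} x_0\bigr\| \ \ge \ C\, \rho\bigl(e^{\tau \cA}\bigr)^{\, k},
\]
and combine with $\|x(k\tau)\| \le C' e^{(\sigma+\eps)k\tau}$; sending $k\to\infty$ and then $\eps \to 0$ yields $\sigma \ge \beta(\tau)$. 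No positivity is needed for this half.

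For the upper bound $\sigma \le \alpha(P)$ I would fix $\alpha > \alpha(P)$, so that for every vertex $v \in P$ and each $A \in \cA$ the vector $(A - \alpha I)v$ is directed into~$P$. The shifted operator $A-\alpha I$ remains $K$-Metzler, since $e^{t(A-\alpha I)} = e^{-\alpha t}e^{tA}$ is $K$-nonnegative for every $t \ge 0$; in particular $(I+\eta(A-\alpha I))K \subset K$ for small $\eta > 0$. By Proposition~\ref{p30}, to conclude that $\cA - \alpha I$ is stable on~$K$, whence $\sigma < \alpha$, it suffices to verify that $(A-\alpha I)x$ is directed into $P$ at every $x \in \partial_K P$; taking infimum over $\alpha > \alpha(P)$ then gives the required inequality.

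The delicate step is promoting the directed-inside property from vertices to arbitrary boundary points of the monotone polytope. By definition of the monotone convex hull, every $x \in P$ decomposes as $x = y - z$ with $y = \sum_i \lambda_i v_i$ a convex combination of vertices of $P$ and $z \in K$. For sufficiently small $\eta > 0$,
\[
x + \eta(A-\alpha I)x \ = \ \sum_i \lambda_i \bigl(v_i + \eta(A-\alpha I)v_i\bigr) \ - \ \bigl(I + \eta(A-\alpha I)\bigr)z \ =: \ y' - z'\, ,
\]
where $y' \in {\rm int}_K P$ as a convex combination of points already in the interior, and $z' \in K$ by the Metzler property of $A - \alpha I$. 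Moreover $y' - z' = (I+\eta(A-\alpha I))x \in K$ for the same reason. A short verification using monotonicity of $P$ then shows $y' - z' \in {\rm int}_K P$: any $x'' \in K$ close to $y' - z'$ yields $y'' := x'' + z' \in K$ close to $y'$, hence $y'' \in P$ by openness of a neighborhood of $y'$ in $P$, and therefore $x'' = y'' - z' \in P$ by monotonicity. The main obstacle is precisely this last step, where the two kinds of boundary of a monotone polytope --- the convex-hull portion and the $\partial K$ portion --- must be reconciled simultaneously; the decomposition $x = y - z$ handles them in parallel, the vertex condition controlling the convex part and the Metzler property controlling the cone part.
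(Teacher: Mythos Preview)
Your proof is correct and follows essentially the same approach as the paper. The paper rewrites $v+\eta(A-\alpha I)v = \eta\bigl(A+(\eta^{-1}-\alpha)I\bigr)v$ and observes that $A+(\eta^{-1}-\alpha)I$ is $K$-positive for small~$\eta$, which is exactly your observation that $I+\eta(A-\alpha I)$ maps $K$ into~$K$; from there both arguments extend the inclusion from vertices to convex combinations and then to all of~$P$ via monotonicity. Your explicit verification that $y'-z'\in{\rm int}_K P$ (perturb $x''$, push up by $z'$ to land near $y'$, then drop back by monotonicity) is in fact more detailed than the paper's one-line appeal to monotonicity, and fills in the step the paper leaves implicit.
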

\begin{proof} 
The lower bound has already been proved in Proposition~\ref{p50}.  The
upper bound needs a proof, because~$P$ is not a (usual) convex hull of its vertices any more.
Take an arbitrary~$\alpha \in \re$. If for every~$A \in \cA$ and for each vertex~$v \in P$,
 the vector $(A -\alpha I)v$ is directed inside~$P$,
then there is $\eta > 0$ such that $v  + \eta (A -\alpha I)v \in {\rm int}_K \, P\, , \ A \in \cA$,
for each vertex~$v \in P$.
Rewriting this inclusion in the form $\eta (A  + (\eta^{-1} -\alpha) I)v \in {\rm int}_K \, P$,
we see that the operator $A  + (\eta^{-1} -\alpha )I$ is $K$-positive, whenever $\eta$ is small enough.

Hence, this inclusion holds for every convex combination~$x$ of vertices of~$P$, and
for all points $y \le_K x$, i.e., for all $y \in P$.
Thus, $y  + \eta (A -\alpha I)y \in {\rm int} \,P$,
and therefore, the vector $(A -\alpha I)y$ is directed inside~$P$, for every~$y \in \partial_K \, P$.
Proposition~\ref{p30} yields $\, \sigma (\cA - \alpha I) < 0$, and so $\sigma (\cA) < \alpha$. \qed 
\end{proof}

The notions of extremal and  {\em $\eps$-extremal} polytope are extended
to monotone polytopes in a straightforward manner.

\begin{theorem}\label{th8-}
For every compact irreducible family~$\cA$ of Metzler operators,  there is a constant~$C$ such that
for all~$\tau \in (0,1)$ and $ \, \eps > 0$ we have
$$
\alpha (P) \ - \ \beta (\tau) \ \le \ C\tau \, + \, \eps\, ,
$$
whenever~$P$ is $\eps$-extremal monotone polytope for the family $e^{\tau \cA}$.
\end{theorem}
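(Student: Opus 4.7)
The plan is to mimic the proof of Theorem~\ref{th8} line by line, replacing the tools of Section~2 by their $K$-monotone counterparts: Proposition~\ref{p50} by Proposition~\ref{p50-}, Theorem~B by Proposition~\ref{p30}, and Barabanov's theorem by Theorem~\ref{th10}. As a first step, since $\cA - sI$ remains $K$-Metzler for every $s \in \re$ (adding $sI$ does not affect Metzler's sign condition), we may pass from $\cA$ to $\cA - \beta(\tau)I$ and assume $\beta(\tau) = 0$. It then suffices to exhibit a constant $C = C(\cA)$ such that for every vertex $v$ of $P$ and every $A \in \cA$, the vector $(A - (C\tau + \eps)I)v$ is directed inside $P$ in the $K$-topology sense. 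Proposition~\ref{p30} applied to $\cA - (C\tau + \eps)I$ then gives $\sigma(\cA) < C\tau + \eps$, hence $\alpha(P) \le C\tau + \eps$.

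The decisive technical ingredient is a monotone version of Proposition~\ref{p20}: for every $K$-irreducible compact family $\cA$ of Metzler operators and every admissible $\tau \in (0,1)$, each contractive monotone norm $\|\cdot\|_\tau$ of $e^{\tau \cA}$ satisfies, after a suitable normalization, $H_K(\cA)\,\|x\|_\tau \le \|x\| \le \|x\|_\tau$ for all $x \in K$, where $H_K(\cA)>0$ depends only on $\cA$. To prove this I would adapt the Kozyakin--Pokrovsky construction: replace the symmetrized orbit by the monotone orbit $\mathbf{O}_K(x) = {\rm co}_{-}\bigl\{\Pi_k x \ \bigl| \ \Pi_k \in \cA^k,\, k = 0,\dots,d-1\bigr\}$ and define $h_K(\cA)$ as the supremum of radii~$r$ such that $\mathbf{O}_K(x)$ contains a $K$-interior ball of radius $r$ (intersected with $K$) for every unit vector $x \in K$. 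A direct adaptation of the proof of Lemma~\ref{l15}, using the face-plane definition of $K$-irreducibility (Definition~\ref{d20}) rather than arbitrary invariant subspaces, gives $h_K(\cA)>0$ iff $\cA$ is $K$-irreducible; the Lemma~\ref{l25} argument then upgrades any contractive monotone norm to uniform equivalence with the Euclidean norm on $K$. The step from $\tau$ to $2^n\tau \in [1,2]$ is identical to Proposition~\ref{p20}, since $(e^{\tau A})^{2^n} = e^{2^n \tau A}$ and admissibility of~$\tau$ propagates to~$2^n\tau$ through a straightforward adjustment of Lemma~\ref{l5} for $K$-irreducibility.

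With this uniform equivalence in hand, the Taylor estimate~(\ref{const1}) applies unchanged and can be rephrased in the norm $\|\cdot\|_P$. For a vertex $v\in P$ with $\|v\|_P=1$ and for $A' = A - \eps I$, the $\eps$-extremality of $P$ gives $\|e^{\tau A'} v\|_P \le 1$, whence the triangle inequality yields $\|(I+\tau A')v\|_P < 1 + C\tau^2$ (with a new constant $C$ absorbing the equivalence constants, independent of $\tau$ and $P$). Therefore $y := (1+C\tau^2)^{-1}(I+\tau A')v$ belongs to $P$, and a negligible shift towards any fixed $K$-interior point of $P$ places it in ${\rm int}_K P$. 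Since $y - v = \tau(1+C\tau^2)^{-1}(A' - C\tau I)v$, the vector $(A'-C\tau I)v = (A - (C\tau + \eps)I)v$ is directed inside $P$ in the $K$-sense, completing the proof. The main obstacle, and the only step that is not a direct transcription of the Section~2 argument, is the monotone analogue of the Kozyakin--Pokrovsky irreducibility measure and the resulting uniform equivalence lemma; everything else is routine once the cone analogues of Lemmas~\ref{l15}, \ref{l25} and Proposition~\ref{p20} are in place.
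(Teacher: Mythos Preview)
Your proposal is essentially correct and follows the same route as the paper: introduce the $K$-monotone irreducibility measure $h_K(\cA)$ (the paper's Lemma~\ref{l15-}), extend Lemma~\ref{l25} to monotone norms, establish the analogue of Proposition~\ref{p20}, and then run the Taylor-estimate argument verbatim.

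One point deserves correction. You carry over the admissibility machinery from Section~2 and speak of ``admissible $\tau$'' and adapting Lemma~\ref{l5}. But Theorem~\ref{th8-} asserts the bound for \emph{all} $\tau \in (0,1)$, with no admissibility restriction. The paper handles this by proving a separate fact (Lemma~\ref{l40}): if $\cA$ is $K$-irreducible and consists of Metzler operators, then $e^{t\cA}$ is $K$-irreducible for every $t>0$. The argument is short: shift to $\cA' = hI + \cA$ with $h$ large so that each $A'$ is $K$-positive; then $e^{tA'} \ge_K I + tA' \ge_K tA'$, so $K$-irreducibility of $t\cA'$ forces $K$-irreducibility of $e^{t\cA'}$, hence of $e^{t\cA}$. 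This lets you define $H_K(\cA) = \min_{t\in[1,2]} h_K(e^{t\cA}) > 0$ over the full interval, and the doubling trick $\tau \mapsto 2^n\tau$ then works for every $\tau \in (0,1)$ without exception. Your route through an adapted Lemma~\ref{l5} would only cover admissible $\tau$ and leave a (small-measure) set of $\tau$ unaddressed.
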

The proof is actually the same as for Theorem~\ref{th8}, but with the use of modified
parameter of irreducibility $h_K(\cA)$. This value is defined for an arbitrary family~$\cA$ of $K$-positive operators
as follows:
 \begin{eqnarray}
h_K(\cA) & = & 
\max \, \Bigl\{ \, r  \ge 0 \, \Bigl| \,  \forall x \in K \, , \, \|x\| = 1 \, , \, \Bigr.
\nonumber
\\
& &
\Bigl. B_K(0, r) \subset {\rm co}_- \, \{ \Pi_k x \  | \
\Pi_k \in \cA^k \, , \  k = 0, \ldots , d-1\}  \,  \Bigr\}    \, .
\label{koz-}
\end{eqnarray}
\begin{lemma}\label{l15-}
A family~$\cA$ of $K$-positive operators is  $K$-reducible if and only if~$h_K(\cA) = 0$.
\end{lemma}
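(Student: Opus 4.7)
The plan is to mirror the proof of Lemma~\ref{l15}, with two adaptations for the cone setting: monotone hulls replace symmetric hulls, and we seek an invariant face plane rather than merely an invariant subspace. For the easy direction, I would assume $\cA$ is $K$-reducible and let $L$ be a nontrivial $\cA$-invariant face plane with corresponding face $F = L \cap K$. For any unit $x \in F$, every orbit element $\Pi_k x$ remains in $L \cap K = F$ by the combination of $\cA$-invariance of $L$ and $K$-positivity. The defining property of a face---if $y \in F$, $z \in K$ and $y - z \in K$, then $z \in F$---then ensures that ${\rm co}_-(\{\Pi_k x\}) \subset F$; since $F$ is a proper face it has empty interior in $K$, so no ball $B_K(0, r)$ with $r > 0$ can be contained in this set, giving $h_K(\cA) = 0$.

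For the converse, assume $h_K(\cA) = 0$. By compactness of the unit Euclidean sphere intersected with $K$, I would pick a unit $x_* \in K$ such that $S := {\rm co}_-\{\Pi_k x_* : k \le d-1\}$ contains no ball $B_K(0, r)$. First, I would show $S \subset \partial K$: if some $y \in S$ lay in ${\rm int}\, K$, monotonicity of $S$ would give $(y - K) \cap K \subset S$, and this order interval contains a neighborhood $B_K(0, \delta)$ since $y$ is a $K$-interior point, a contradiction. Next, I would invoke the fact that a convex subset of $\partial K$ must lie in a single proper face of $K$ (along any segment contained in $\partial K$ the endpoints share a face by the face property), which places each $\Pi_k x_*$, $k \le d-1$, inside a single proper face $F$ of $K$. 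Then, exactly as in Lemma~\ref{l15}, I would build $L_1 = {\rm span}(x_*)$ and $L_{i+1} = {\rm span}(L_i \cup \bigcup_{A \in \cA} A L_i)$; since $L_d \subset {\rm span}(F)$ has dimension strictly less than $d$, the chain stabilizes at some $i \le d-1$, producing a proper $\cA$-invariant subspace $V = L_d$ contained in ${\rm span}(F)$. In particular the entire $\cA$-orbit of $x_*$ lies in $V \cap K \subset F$.

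The hard part will be upgrading $V$ to an invariant \emph{face plane} of $K$. My plan is to let $F' \subset F$ be the smallest face of $K$ containing the entire orbit of $x_*$---well-defined since the intersection of any family of faces of $K$ is again a face---and to argue that $F'$ is $\cA$-invariant by exploiting $K$-positivity together with the fact that the orbit is $\cA$-invariant and generates $F'$ in the face-lattice sense. The delicate point is that the conic hull of the orbit need not itself be a face of $K$, so the passage from ``$A$ permutes the orbit'' to ``$A F' \subset F'$'' for all $y \in F'$ requires a careful use of $K$-positivity and the extremal-ray structure of faces, relying on the observation that a point of $F'$ can be approached by elements of the conic hull of the orbit in a manner compatible with the action of $A$. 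Once this invariance is established, ${\rm span}(F')$ is a nontrivial proper $\cA$-invariant face plane witnessing $K$-reducibility, contradicting the assumption and completing the proof.
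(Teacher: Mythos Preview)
Your approach mirrors the paper's closely. The paper, even more tersely, takes the minimal face $L$ containing ${\rm co}_{-}\{\Pi_k x : k \le d-1\}$ and simply writes ``as in the proof of Lemma~\ref{l15} we show that $AL \subset L$,'' leaving the adaptation to the reader. Your outline fills in more detail, but the step you flag as ``the hard part'' is actually immediate once you use the right description of the minimal face; your proposed route through extremal-ray structure and approximation is an unnecessary detour.

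The key observation is that the minimal face of $K$ containing a set $S \subset K$ coincides with the order-ideal it generates:
\[
F' \ = \ \bigl\{\, z \in K \ : \ z \ \le_K \ w \ \text{for some } w \text{ in the conic hull of } S \,\bigr\}.
\]
(One checks directly that this set is a convex subcone, satisfies the face property $z\in F',\ y\in K,\ y \le_K z \Rightarrow y \in F'$, contains $S$, and sits inside every face containing $S$.) Apply this with $S$ equal to the full orbit $\{\Pi_k x_* : k \ge 0\}$, which your subspace-chain argument has already confined to the proper face $F$. For any $y \in F'$ write $y \le_K \sum_i \lambda_i \Pi_{k_i} x_*$; since each $A \in \cA$ is $K$-positive,
\[
Ay \ \le_K \ \sum_i \lambda_i \,(A\Pi_{k_i}) x_*\, ,
\]
and each $A\Pi_{k_i} x_*$ is again in the orbit, hence in $F'$. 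By the face property $Ay \in F'$. Thus $AF' \subset F'$ for all $A \in \cA$, and ${\rm span}(F')$ is the required invariant face plane.

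So your detour through the invariant subspace $V$ is not wasted---it is what places the full orbit inside a proper face---but once that is done, the invariance of the minimal face is a one-line consequence of $K$-positivity and the order-ideal description, with nothing delicate about it.
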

\begin{proof} 
If $h_K(\cA) = 0$, then by compactness it follows that there
exists a point $x \in K\, , \, \|x\| = 1$ for which
 the set ${\rm co}_-\, \{ \Pi_k x \, , \,  \Pi_k \in \cA^k \, , \, k = 0, \ldots , d-1\}$
has an empty interior.  Therefore, this set is contained in a proper face of~$K$.
Let~$L$ be a minimal by inclusion face containing this set.  As in the proof of Lemma~\ref{l15}
we show that $AL \subset L$ for all~$A \in \cA$. Hence~$\cA$ is $K$-reducible. The proof of the converse is straightforward. \qed 
\end{proof}

Lemma~\ref{l25} is extended for monotone norms and for the parameter~$h_K(\cA)$
without any change. Then we need the following observation:
\begin{lemma}\label{l40}
If a family~$\cA$ of Metzler operators is $K$-irreducible, then so is the
family~$e^{t \cA}$, for each $t > 0$.
\end{lemma}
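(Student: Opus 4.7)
The plan is to prove the contrapositive: if some nontrivial face plane $L$ of~$K$ is invariant under every $e^{tA}$ with $A \in \cA$, then $L$ is already invariant under every $A \in \cA$, contradicting the $K$-irreducibility of~$\cA$. The key tool is the face-extraction property of $F := L \cap K$: if $u + v \in F$ with $u, v \in K$, then both $u, v \in F$. Combined with the decomposition
$$
e^{tA}x \ = \ (x + tAx) \ + \ \sum_{k \ge 2} \frac{t^k}{k!}\,A^k x\, ,
$$
this lets me extract information about $Ax$ from information about $e^{tA}x$.

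First I would reduce to the case $A \ge_K 0$ for every $A \in \cA$ by replacing~$\cA$ with~$\cA + cI$ for a single constant $c$ that works for the compact $K$-Metzler family~$\cA$. Both $L$-invariance of~$A$ and $L$-invariance of~$e^{tA}$ are unaffected by this shift, since $e^{t(A+cI)} = e^{ct}e^{tA}$.

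Next, fix $A \in \cA$ with $A \ge_K 0$ and $x \in F$, and split $e^{tA}x = S + R$ with $S = x + tAx$ and $R = \sum_{k \ge 2}(t^k/k!)\,A^k x$. Both summands lie in~$K$: $S$ as a positive combination of $x, Ax \in K$, and $R$ as the norm-limit, in the closed cone~$K$, of positive combinations of vectors $A^k x \in K$. By the assumed $e^{tA}$-invariance of~$L$ together with Lemma~\ref{l10}, $e^{tA}x \in L \cap K = F$, so the face-extraction property forces $S \in F \subseteq L$. Since $x \in L$ and $L$ is a subspace, $tAx = S - x \in L$, hence $Ax \in L$. This holds for every $x \in F$, and because $F$ spans~$L$, linearity gives $AL \subseteq L$. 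Applying this to every $A \in \cA$ produces a nontrivial $\cA$-invariant face plane, the desired contradiction.

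The only real point of care, which I regard as the main obstacle, is verifying that the face-extraction property applies to the specific two-term splitting~$S+R$ rather than to the full infinite series; once the tail $R$ is identified as a limit of positive combinations in the closed cone~$K$, the rest is routine. Unlike Lemma~\ref{l5}, no admissibility restriction on~$t$ is needed here: face-plane invariance is much more rigid than mere subspace invariance, and the $K$-positivity of the terms $A^k x$ makes the truncation to a positive remainder automatic.
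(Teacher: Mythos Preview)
Your proof is correct and follows essentially the same approach as the paper's: shift each $A$ by a multiple of the identity to make it $K$-positive, then use the face-extraction property to peel off the first-order term from $e^{tA}x$ and conclude $Ax \in L$. The paper phrases the extraction step via the operator inequality $e^{tA'} \ge_K I + tA' \ge_K tA'$ (so that $e^{tA'}x - tA'x \in K$ directly), whereas you write out the series tail $R$ explicitly; these are the same argument in slightly different dress.
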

\begin{proof}
 If~$\cA$ is
 $K$-irreducible, then so is the family $\cA' = h I + \cA$, for every $h > 0$.
 If $h$ is large enough, then every operator $A' \in \cA'$ is  $K$-positive,
 and hence $e^{ t A'} \, \ge_K \, I +  t A' \ge_K \, t \, A'$.
 Therefore, the $K$-irreducibility of the family $t \cA'$ implies that of the family $e^{t A'}$.
 Hence, the family~$e^{t \cA} = e^{- t h}e^{\tau \cA'}$ is $K$-irreducible. \qed
\end{proof}
% \smallskip

Thus, for positive systems we do not need admissible numbers and do not use Lemma~\ref{l5}. 
Then, for a family~$\cA$ of Metzler operators,
 we denote
$$H_K(\cA) = \min\limits_{t \in [1,2]}h_K(e^{\, t \cA}).$$
This minimum is attained
at some point~$t_0 \in [1,2]$, because $h_K(e^{\, t \cA})$ is a continuous function of~$t$.
If $\cA$ is $K$-irreducible, then, by Lemma~\ref{l40}, so is the family~$e^{\, t_0 \cA}$,
 and therefore $H_K(\cA) = h_K(e^{\,  t_0 \cA}) > 0$.
 Thus, $H_K(\cA)$ is strictly positive for $K$-irreducible~$\cA$.
Then we establish a complete analogue of Proposition~\ref{p20} for $K$-primitive
families and for the parameter~$H_K(\cA)$. The rest of the proof of Theorem~\ref{th8-}
is literally the same as the proof of  Theorem~\ref{th8}.

\subsection{Algorithm~(P) for computing Lyapunov exponents
and constructing polytope norms of positive systems}

We are now ready to present an algorithm for computing the Lyapunov exponent and constructing
a polytope Lyapunov function specially for positive systems.
The corresponding algorithm will be referred to as Algorithm~(P) (P stands for {\em positive}).

For the sake of simplicity, we consider only the case~$K = \re^d_+$, i.e., we deal with a set of Metzler matrices~$\tilde A$,
although the same construction is applicable for other cones, for instance, for the positive semidefinite cone
(with the corresponding replacement of LP problems by semidefinite problems).
Algorithm~(P) is very similar to Algorithm~(R), we do not therefore give its detailed
presentation, but describe the differences from Algorithm~(R) only.
\smallskip

1) Algorithm~(R) is applicable for all irreducible sets of operators,
while Algorithm~(P) is applicable for all $K$-irreducible sets of $K$-Metzler operators.
In case $K = \re^d_+$, we obtain a positively irreducible set of Metzler matrices.
\smallskip

2) By the Krein--Rutman theorem~\cite{KR}, we have $\lambda_{\max} > 0$, and $v_1 \in K$. So, we do not have to consider cases when $\lambda_{\max}$ is negative or complex.
\smallskip

3) The main difference is that Algorithm~(P) constructs a {\em monotone polytope}~$P$.
Thus, in each step we have a monotone polytope $P_i = {\rm co}_{-}(\mathcal{V}_i)$. Everywhere we replace the symmetrized convex hull ${\rm co }_{s}(\cdot )$ by the  monotone convex hull
${\rm co }_{-}(\cdot ) $ (see (\ref{eq:cominus})). In particular the LP problem at line 11
of Algorithm~(P) is replaced by: \\

\noindent
{\bf 11}. Check whether  $z \in {\rm co}_{-} (\cV_i)$, $\cV_i := \{ v_j \}_{j=1}^{n_i}$, i.e. solve the LP problem
$$
\begin{array}{rcl}
\min & & f_\ell=\sum\limits_{j=1}^{n_i} \lambda_j
\\[0.25cm]
{\rm subject \ to}
     & & \sum\limits_{j=1}^{n_i} \lambda_j v_j  \!=\! z
\\[0.25cm]
{\rm and}
     & & \lambda_j \ge 0, \quad j=1,\ldots,n_i.
\end{array}
$$

The rest of the algorithm is the same as for Algorithm~(R).
The corresponding LP problems of Algorithm~(P), are described in~\cite{GP}.

The whole procedure gives a monotone polytope Lyapunov norm in~$K$ generated by the monotone polytope~
$P_N = {\rm co}_{-}(\cV_N)$ and a lower bound and upper bounds~(\ref{result}) for the Lyapunov exponent.

The upper bound is obtained by Algorithm \ref{algoP2}, which is similar to the previously
described Algorithm \ref{algoR2}.

\begin{algorithm}
\DontPrintSemicolon
\KwData{$\cA, \cP_{N}, \cV_{N}$ (system of vertices of $\cP_N$)}
\KwResult{$\alpha$}
\Begin{
\For{$i=1,\ldots,m$}{
\nl Solve the LP problems (w.r.t. $\{t_v\}$, $\alpha_i$)
\begin{eqnarray}
\label{LP.p}
% \left\{
\begin{array}{rl}
\min & \alpha_i
\\[0.1cm]
{\rm s.t.}
     & w + \delta (A_i - \alpha_i I) w \le \sum\limits_{v \in \cV_{N}}\, t_v\,
		 v \quad \forall w \in \cV_{N}
\\[0.1cm]
{\rm and}
     & \sum\limits_{v \in \cV_{N}}\, t_v \le 1, \qquad t_v \ge 0 \quad \forall v \in \cV_{N}
\end{array}
% \right.
\nonumber
\end{eqnarray}
}
\nl Return $\alpha(\cP_N) := \max\limits_{1 \le i \le m} \alpha_i$ \;
}
\caption{Algorithm~(P), part 2. Computing the best upper bound \label{algoP2}}
\end{algorithm}
% \smallskip

Theorem~\ref{th40} and Corollary~\ref{c50} hold true for Algorithm~(P) without any change, and their proofs stay the same for this case.

\begin{remark}\label{r40}
{\rm Numerical experiments (Section~5) show a very high efficiency of Algorithm~(P). While Algorithm~(R) finds the Lyapunov exponent with a satisfactory accuracy in dimensions~$d \le 10$ on a standard laptop, Algorithm~(P) does the same for positive systems of dimensions up to $100$ and higher. The number of vertices of the polytopes constructed by Algorithm~(P) is significantly smaller.
The reason is that the positive convex hull is regularly  much larger than the usual convex hull, and hence
Algorithm~(P) sorts out much more redundant vertices.
}
\end{remark}

\section{Stabilizability of positive systems}

 The {\em lower Lyapunov exponent} $\check \sigma (\cA)$ is the infimum
of numbers $\alpha$, for which there exists a control function $A(\cdot) \in \cU$ such that
 every corresponding trajectory of~(\ref{main}) satisfies
$\|x(t)\| \, \le \, C \, e^{\, \alpha t}$.  The system is {\em stabilizable} if
there is a control function $A(\cdot) \in \cU$ such that
$\|x(t)\| \to 0$ as $t \to +\infty$ for every corresponding trajectory. The stabilizability is equivalent to the condition
 $\check \sigma < 0$~\cite{LA,SDP}.

The following analogue of equality~(\ref{plus}) is true for the lower Lyapunov exponent:
\begin{equation}\label{plus-}
  \check \sigma \, (\cA\,  + \, s \, I) \ = \ \check \sigma \, (\cA) \, + \, s \ .
\end{equation}
% \smallskip

Although the stabilizability issue is a very difficult problem, for positive systems it can often be
efficiently solved. Therefore we restrict our attention to positive systems. Besides, the stabilizability
of positive systems was the subject of an extensive literature (see~\cite{FV2,LA,SDP} and references therein).

Thus, we study stabilizability of $K$-positive systems, where~$K \subset \re^d$ is an arbitrary cone.
In case of the positive orthant~$K = \re^d_+$, we obtain the stabilizability of positive (in the usual sense) systems
of Metzler matrices. For other cones~$K$, such as polyhedral cones, positive semidefinite cones, etc.,
this problem also makes sense.
We begin by introducing the concept of Lyapunov antinorm on cones, which turns out to be natural
for characterizing  stabilizability. Some important properties of those
antinorms, in particular, an analogue of Theorems~A and~\ref{th10} (the existence of invariant antinorm)
and~Theorem~B (a geometric criterion of stabilizability)  are established in subsections 4.1 and 4.2.
Then we derive lower and upper bounds for $\check \sigma (\cA)$ by means of infinite polytopes on cones,
and estimate the distance between them. Applying these results we present Algorithm~(L) which
estimates the lower Lyapunov exponent and constructs the corresponding polytope antinorm on the cone.

\subsection{Antinorms on cones}

It is not difficult to formulate  analogues
to the notions of extremal and invariant norms
for stabilizable systems. In case $\check \sigma = 0$, it would be natural
 to define a norm to be extremal, if it is   non-decreasing in~$t$ on every trajectory~$x(t)$ of the system.
However, simple examples show that
extremal norms may not exist,  even for an irreducible pair of positive $2\times 2$-matrices.
 It was first observed in~\cite{BS} that stabilizability does not imply the
existence of convex Lyapunov function. In the proof of Theorems~\ref{th10},
an extremal norm is constructed as a pointwise supremum of some convex functionals. This is natural, because
the operation of taking supremum respects the convexity. For the lower Lyapunov exponent, the
supremum has to be replaced by infimum, but this operation does not preserve convexity.
Therefore, one might suggest to consider concave functions rather than convex.  However,
 positive homogeneous concave functions on~$\re^d$ do not exist.  On the other hand, such functions
exist on any cone~$K \subset \re^d$, and this makes theoretically possible to apply them for $K$-positive systems.  We are going to show that   stabilizable positive systems defined by Metzler operators on an arbitrary cone~$K$
 do always have concave Lyapunov functions.
\begin{defi}\label{d30}
An antinorm on a cone~$K$ is a nontrivial nonnegative concave homogeneous functional on~$K$.
An antinorm is called positive if it is positive at all points $x \in K\setminus \{0\}$.
\end{defi}
The concept of antinorm originated in~\cite{P1} to analyze random positive systems.
It was applied to discrete-time stabilizable positive systems in~\cite{GP}.
In contrast to norms, an antinorm is always monotone on the cone.
\begin{lemma}\label{l50}
Any antinorm~$f$ on a cone $K$ is monotone, i.e., $x \ge_K y \, \Rightarrow
\, f(x)\ge f(y)$.
\end{lemma}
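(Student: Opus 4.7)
The plan is to exploit the three defining features of an antinorm --- nonnegativity on $K$, positive homogeneity, and concavity --- to convert the inequality $x \ge_K y$ into a pointwise inequality on $f$. The key observation is that if $x \ge_K y$, then $z := x - y$ lies in $K$, so $f(z)$ is defined and nonnegative.

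First I would write $x$ as twice the midpoint of $y$ and $z$, namely
\[
\tfrac{1}{2} x \ = \ \tfrac{1}{2}\, y \, + \, \tfrac{1}{2}\, z,
\]
where both $y$ and $z = x-y$ belong to $K$. Applying concavity of $f$ on the cone gives
\[
f\!\left(\tfrac{1}{2}x\right) \ \ge \ \tfrac{1}{2}\, f(y) \ + \ \tfrac{1}{2}\, f(z).
\]
Then I would invoke positive homogeneity, which yields $f(\tfrac{1}{2}x) = \tfrac{1}{2} f(x)$, so multiplying through by $2$ produces
\[
f(x) \ \ge \ f(y) \ + \ f(z).
\]
Finally, since $z = x-y \in K$ and $f$ is nonnegative on $K$ by Definition~\ref{d30}, we have $f(z)\ge 0$, whence $f(x)\ge f(y)$, as required.

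There is no real obstacle here: the argument is a three-line manipulation using only the definition of antinorm. The only thing worth emphasizing is that one must remain inside the cone at every step --- which is automatic because $y$ and $x-y$ both lie in $K$ and $K$ is closed under addition and nonnegative scalar multiplication --- so concavity and nonnegativity of $f$ may legitimately be applied to the midpoint decomposition. Homogeneity is needed to recover $f(x)$ on the left, and this is exactly the point where the analogous argument for a norm would instead give the reverse inequality via the triangle inequality.
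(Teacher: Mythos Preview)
Your proof is correct. It differs from the paper's argument, which proceeds by contradiction: assuming $f(x)<f(y)$, the paper extrapolates along the ray $t\mapsto y+t(x-y)$ (which stays in $K$ for all $t\ge 0$ since $x-y\in K$), uses concavity to bound $f(y+t(x-y))\le f(y)+t\bigl(f(x)-f(y)\bigr)$ for $t>1$, and observes that the right-hand side becomes negative for large $t$, contradicting nonnegativity. Your approach is direct rather than by contradiction: you decompose $x=y+(x-y)$ and use concavity plus homogeneity on the midpoint to obtain the superadditivity inequality $f(x)\ge f(y)+f(x-y)$, then drop the nonnegative term. Both arguments use exactly the same three ingredients (concavity, homogeneity, nonnegativity); yours is slightly shorter and yields the superadditivity of $f$ on $K$ as an explicit intermediate step, which is itself a useful fact about antinorms.
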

\begin{proof} 
If $x \ge_K y$, then $y + t(x-y) \in K$ for every $t \ge 0$.
Suppose $f(x) < f(y)$; then by concavity, for every $t > 1$, we have
$f(y + t(x-y)) \le f(y) + t (f(x) - f(y))$, which becomes negative
for large positive~$t$. This contradicts nonnegativity of $f$. \qed 
\end{proof}

\begin{defi}\label{d40}
Let all operators of~$\cA$ be Metzler for a cone~$K$.
An antinorm $f(\cdot)$ on $K$ is called extremal if
for every trajectory of~(\ref{main}) starting in~$K$
we have ${f(x(t)) \, \ge \, e^{\, \check \sigma \, t}f(x(0))\, , \ t \ge 0}$.

An extremal antinorm is called invariant if for every $x_0 \in K$ there exists a generalized trajectory
$\bar x (t)$ with $\bar x (0) = x_0$ such that  $f(x(t)) \, = \, e^{\, \check \sigma \, t}\, f(x_0)\, , \ t \ge 0$.
\end{defi}

Thus,   for an extremal antinorm
the function $e^{-\, \check \sigma \, t}f(x(t))\, $ is non-decreasing in $t$ on every trajectory. For an invariant antinorm,
this function is identically constant on some trajectory, and for every point $x_0 \in K$ there is
such a trajectory starting in it. For $ \check \sigma = 0$, we have
\begin{cor}\label{c30}
Let $K$ be a given cone. In case~$ \check \sigma (\cA) = 0\, $ an  antinorm is extremal for~$\cA$ if and only if it is non-decreasing in~$t$ on every trajectory of~(\ref{main}) in the cone. An extremal antinorm is invariant if and only if for every $x_0 \in K$ there exists a
generalized trajectory $\bar x (t)$ with $\bar x (0) = x_0$ on which this antinorm is identically constant.
\end{cor}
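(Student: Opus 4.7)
The plan is simply to unpack Definition~\ref{d40} in the case $\check\sigma(\cA)=0$, where the exponential factor $e^{\check\sigma t}$ reduces to $1$, and then use the fact that any time-shift of a trajectory is again a trajectory. Since the statement is a pair of equivalences, I would prove each direction of both equivalences separately, with the only nontrivial observation being the shift-invariance argument needed to promote ``$f(x(t))\ge f(x(0))$'' into full monotonicity in $t$.

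First I would address the extremal equivalence. The ``if'' direction is immediate: if $f(x(t))$ is non-decreasing in $t$ along every trajectory starting in~$K$, then in particular $f(x(t))\ge f(x(0)) = e^{\check\sigma t}f(x(0))$ for all $t\ge 0$, which is the definition of extremality when $\check\sigma=0$. For the ``only if'' direction, fix a trajectory $x(\cdot)$ in~$K$ corresponding to a control $A(\cdot)\in\cU$, and take $0\le t_1<t_2$. Define the shifted function $y(s)=x(t_1+s)$ for $s\ge 0$, which is itself a trajectory of~(\ref{main}) starting at $y(0)=x(t_1)\in K$ and driven by the shifted control $s\mapsto A(t_1+s)\in\cA$ (this control is still summable, hence admissible). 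Extremality applied to $y$ gives $f(y(t_2-t_1))\ge f(y(0))$, i.e.\ $f(x(t_2))\ge f(x(t_1))$, which is precisely the monotonicity claim.

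Next I would handle the invariant equivalence, which is even more direct. An extremal antinorm $f$ is invariant, by Definition~\ref{d40}, if for every $x_0\in K$ there exists a generalized trajectory $\bar x(\cdot)$ with $\bar x(0)=x_0$ satisfying $f(\bar x(t))=e^{\check\sigma t}f(x_0)$ for all $t\ge 0$. Setting $\check\sigma=0$, the right-hand side equals $f(x_0)=f(\bar x(0))$, so the condition becomes $f(\bar x(t))\equiv f(\bar x(0))$, i.e.\ $f$ is identically constant along $\bar x$. Conversely, if such a generalized trajectory with constant $f$-value exists from each $x_0\in K$, then $f(\bar x(t))=f(x_0)=e^{0\cdot t}f(x_0)$, matching the defining condition for invariance.

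There is no real obstacle here: the only subtlety is recognising that the semigroup structure of the control system (any time-shift of a trajectory is a trajectory driven by a shifted control function that is still in $\cU$) is what converts the one-sided inequality in the definition of extremality into full monotonicity in~$t$. Once this is observed the corollary follows purely by substituting $\check\sigma=0$ into Definition~\ref{d40}.
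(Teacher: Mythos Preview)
Your proposal is correct and mirrors exactly what the paper does: the paper simply notes (in the paragraph immediately preceding the corollary) that the time-shift argument turns the defining inequality into monotonicity of $e^{-\check\sigma t}f(x(t))$, and the corollary is then stated without further proof as the specialization $\check\sigma=0$. Your write-up is just a more explicit version of this same reasoning.
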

Consider the unit level set $D = \{x \in K | \ f(x) \ge 1\}$ of this antinorm.
This is a convex unbounded subset of~$K$. The antinorm is extremal if and only if
every trajectory starting on the boundary $\partial_K D$ never leaves the set~$D$.
The antinorm is invariant if for each point of the boundary there exists a trajectory starting at this point
 that eternally  remains on the boundary.

\begin{theorem}\label{th20}
Let~$K$ be a given cone. Every compact set~$\cA$ of $K$-Metzler operators possesses an extremal antinorm on~$K$.
If, in addition, every operator from~$\cA$ is $K$-irreducible, then there exists a positive invariant antinorm on~$K$.
\end{theorem}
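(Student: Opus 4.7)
By the shift identity~(\ref{plus-}) and the fact that adding $sI$ to every operator of $\cA$ preserves both the $K$-Metzler property and the $K$-irreducibility of each individual operator (off-diagonal entries and eigenvectors are unchanged), we may replace $\cA$ by $\cA - \check\sigma(\cA)\,I$ and assume throughout $\check\sigma(\cA) = 0$. Fix a strictly positive linear antinorm $f_0(x) = (y_0, x)$ with $y_0 \in {\rm int}\, K^*$; this $f_0$ is positive on $K \setminus \{0\}$ and therefore equivalent to the Euclidean norm on the compact sphere $S = \{x \in K : \|x\| = 1\}$. Define, for $x \in K$,
\[
f(x) \ = \ \inf_{A(\cdot) \in \cU,\ t \ge 0}\, f_0\bigl(x(t;x,A)\bigr).
\]
Since~(\ref{main}) is linear in the initial datum and $f_0$ is linear (hence concave), the functional $f$ is concave; nonnegativity and homogeneity of degree one are immediate. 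For monotonicity along trajectories, given any trajectory $y(\cdot)$ with $y(0) = x$, each pair $(t, A')$ starting at $y(s)$ corresponds by concatenation to a pair $(s+t, A \oplus A')$ starting at $x$, so the family of candidates in the infimum defining $f(y(s))$ is a subfamily of that defining $f(x)$. Hence $f(y(s)) \ge f(x)$, which in the normalised regime is the extremality inequality $f(x(t)) \ge e^{\check\sigma t} f(x(0))$.

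It remains, for the first claim, to verify that $f$ is an antinorm in the sense of Definition~\ref{d30}, i.e.\ $f \not\equiv 0$. Assume for contradiction that $f$ vanishes on $K$. Then for each $x \in S$ and each $\varepsilon > 0$ there are a control $A_x \in \cU$ and a time $t_x > 0$ with $f_0(x(t_x;x,A_x)) < \varepsilon$. Using the compactness of $S$, uniform continuity of the flow in initial data on bounded intervals, and the monotonicity of the $K$-Metzler semigroup (Lemma~\ref{l20} and Corollary~\ref{c10}), one schedules the open-loop controls $\{A_{x_i}\}$ from a finite $\varepsilon$-net on adjacent time windows to produce a single control $A^*(\cdot)$ together with a common horizon $T > 0$ such that $f_0(x(T;x,A^*)) \le C\varepsilon$ uniformly for $x \in S$. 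Since $f_0$ is equivalent to $\|\cdot\|$ on $S$, this is a uniform Euclidean contraction by a factor bounded away from $1$ over the window of length $T$; repeating $A^*$ periodically yields a single control under which every trajectory decays uniformly at a strictly negative exponential rate, contradicting $\check\sigma(\cA) = 0$. Upgrading the pointwise choices $(A_x, t_x)$ to a uniformly contracting open-loop control is the principal technical obstacle, and it is at this step that the Metzler/monotonicity structure of $\cA$ is essentially used.

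For the second assertion, assume each $A \in \cA$ is $K$-irreducible. To upgrade $f$ to an invariant antinorm we invoke convex optimal control theory, paralleling the last two steps of the proof of Theorem~\ref{th10} with infima in place of suprema: the reachable set $R_t(x_0)$ of endpoints of generalised trajectories (controls with values in ${\rm co}\,\cA$) of length $t$ from $x_0$ is a convex compact subset of $K$ depending continuously on $t$, so the infimum defining $f(x_0)$ is attained at some $(t^\ast, \bar x(t^\ast))$; a gluing/extension argument then produces a generalised trajectory $\bar x(\cdot)$ starting at $x_0$ along which $f\circ \bar x \equiv f(x_0)$. For positivity, the zero-set $N = \{x \in K : f(x) = 0\}$ is a face of $K$ (being the zero-set of a concave, nonnegative, homogeneous functional). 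Invariance forces every generalised trajectory starting in $N$ to remain in $N$; by an extreme-point argument on the tangency condition $A(t)\bar x(t) \in {\rm span}(N)$, the linear span $L$ of $N$ is a face plane invariant under each extreme point of ${\rm co}\,\cA$, and thus under every $A \in \cA$. Since each such $A$ is $K$-irreducible, Proposition~\ref{p15} excludes a proper nontrivial invariant face plane (a Perron eigenvector of $A|_L$ would lie on $\partial K$), so $L \in \bigl\{\{0\},\, \re^d\bigr\}$; the nontriviality from the previous paragraph rules out $L = \re^d$, whence $N = \{0\}$ and $f > 0$ on $K \setminus \{0\}$.
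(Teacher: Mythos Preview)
Your construction of $f(x)=\inf_{t,A}f_0(x(t;x,A))$ coincides with the paper's extremal antinorm $\psi$, and the concatenation argument for extremality is the same. The difficulties are in the two places you flag as hard.

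\smallskip
\textbf{Nontriviality.} Your ``scheduling'' step is not a proof, and the plan as stated is needlessly complicated. You want a single open-loop control $A^*$ and horizon $T$ with $f_0(x(T;x,A^*))\le C\varepsilon$ uniformly on the whole sphere $S$, and you propose to concatenate the controls $A_{x_i}$ from a finite net on adjacent windows. But applying $A_{x_1}$ first may send a point near $x_2$ far from $x_2$, so the subsequent window with $A_{x_2}$ gives no control over it; the concatenation does not give a uniform bound. The monotonicity you cite actually makes the net unnecessary: pick a \emph{single} interior point $x_0$. Choose $c>0$ so small that $g(y)<c\Rightarrow y\le_K\tfrac12 x_0$ (possible since $x_0\in{\rm int}\,K$ and $g$ is a positive linear antinorm). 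If $f(x_0)=0$ there is one control $\bar A$ on $[0,n]$ with $g(\bar x(n))<c$, hence $\bar x(n)\le_K\tfrac12 x_0$. Extend $\bar A$ periodically; by Corollary~\ref{c10} and linearity, $x(kn)\le_K 2^{-k}x_0$, giving $\check\sigma\le -\tfrac{\ln 2}{n}<0$. This is the paper's argument; it avoids the uniform-in-$x$ problem entirely.

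\smallskip
\textbf{Invariance and positivity.} Here your plan diverges from the paper and has a genuine gap. First, the infimum defining $f(x_0)$ need not be attained at a finite time: the near-minimisers may have $t\to\infty$, and a ``gluing'' argument does not manufacture a trajectory on which $f$ is \emph{constant} (extremality only gives $f(x(t))\ge f(x_0)$, not $\inf_A f(x(\tau))=f(x_0)$ for fixed $\tau$). The paper handles this by passing to a second antinorm $F(z)=\lim_{t\to\infty}\inf_A \psi(x(t;z,A))$ and then running the optimal-control compactness argument; crucially, finiteness of $F$ is obtained via Proposition~\ref{p40}, which produces an embedded cone $K'\subset{\rm int}\,K$ on which all operators remain Metzler --- this is where the hypothesis that \emph{each} $A\in\cA$ is $K$-irreducible is actually used.

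Second, your positivity argument does not work. From invariance you get, for each $x_0$ in the zero set $N$, \emph{one} generalised trajectory $\bar x(\cdot)\subset N$; you do not get that \emph{every} trajectory from $x_0$ stays in $N$. The tangency condition $A(t)\bar x(t)\in{\rm span}(N)$ involves a specific convex combination $A(t)\in{\rm co}\,\cA$, and an extreme-point argument cannot conclude that $A\bar x(t)\in{\rm span}(N)$ for each individual $A\in\cA$: the components along ${\rm span}(N)^\perp$ could cancel in the combination without each vanishing. Hence ${\rm span}(N)$ is not shown to be $A$-invariant, and the appeal to Proposition~\ref{p15} is circular (you need $A$-invariance of the face plane to restrict $A$ and produce a boundary eigenvector). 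The paper does not argue positivity this way; it relies on the embedded cone $K'$ throughout Step~2.
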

\begin{remark}{\rm In Theorem~\ref{th20}, in contrast to Theorem~\ref{th10}, there is no irreducibility assumption
for the existence of an extremal antinorm. It always exists for
a family of Metzler operators. This antinorm, however, may vanish on the
boundary of~$K$ and may not be invariant. An invariant positive
antinorm exists under a stronger irreducibility assumption: each operator from~$\cA$ is $K$-irreducible.
}
\end{remark}

The proof of Theorem~\ref{th20} is in Appendix. That is somewhat similar to the proof of Theorem~\ref{th10}, but with
 differences in several key points.
 The main one is the use of concept of embedded cone.

\begin{defi}\label{d50}
A cone $K'$ is embedded in a cone~$K$ if $ (K'\setminus \{0\}) \, \subset \, {\rm int} \, (K)$.
\end{defi}

\begin{prop}\label{p40}
If all operators from a compact family~$\cA$ are $K$-Metzler and each of them is~$K$-irreducible, then
they are Metzler with respect to some cone~$K'$ embedded in~$K$.
\end{prop}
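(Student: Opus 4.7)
The plan is to exploit the strong $K$-positivity of $e^{tA}$ that follows from $K$-irreducibility, to build a seed cone embedded in $K$ by one application of $e^{TA_0}$ for a fixed $A_0 \in \cA$, and then close this seed under the semigroup of all $\cA$-flows, using the Hilbert projective metric to guarantee that the closure stays inside $\mathrm{int}(K)$.

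First I would record the strong positivity property. Since every $A \in \cA$ is Metzler and $K$-irreducible, Proposition~\ref{p15} rules out eigenvectors of $A$ on $\partial K$; combined with Lemma~\ref{l10} and the standard Perron--Frobenius argument for $K$-irreducible Metzler generators, this yields
\[
e^{tA}\bigl(K \setminus \{0\}\bigr)\ \subset\ \mathrm{int}(K)\, , \qquad A \in \cA\, , \ t > 0\, .
\]
By compactness of $\cA$ and continuity of $(A,x) \mapsto e^{TA}x/\|e^{TA}x\|$ on $\cA \times \{x \in K : \|x\|=1\}$, the combined image is a compact subset of $\mathrm{int}(K)$ at uniform Euclidean distance $\delta > 0$ from $\partial K$.

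Next, fix $A_0 \in \cA$ and $T > 0$ and set $K_0 := e^{TA_0}(K)$, a closed convex cone with $K_0 \setminus \{0\} \subset \mathrm{int}(K)$. Let $\Sigma$ be the semigroup generated by $\{e^{tA} : A \in \cA,\ t \geq 0\}$ and define
\[
K'\ :=\ \overline{\mathrm{conv}}\,\bigcup_{S \in \Sigma} S(K_0)\, .
\]
This is a closed convex cone contained in $K$; $\Sigma$-invariance $e^{tA}K' \subset K'$ follows from $e^{tA}\,\Sigma \subset \Sigma$. Each $S(K_0) \setminus \{0\}$ sits in $\mathrm{int}(K)$, since $e^{TA_0}$ pushes $K\setminus\{0\}$ into $\mathrm{int}(K)$ and every invertible $K$-positive operator $S$ maps $\mathrm{int}(K)$ into $\mathrm{int}(K)$.

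The hard step is to show that taking closed convex hull does not introduce points of $\partial K$, i.e., $K' \setminus \{0\} \subset \mathrm{int}(K)$. For this I would invoke the Hilbert projective metric $d_H$ on $\mathrm{int}(K)$: by Birkhoff's theorem every $K$-positive operator is $d_H$-nonexpansive, and strongly $K$-positive operators are strict contractions; by compactness of $\cA$ the family $\{e^{TA}: A \in \cA\}$ admits a common contraction factor $c_T < 1$. Since $K_0$ has finite Hilbert diameter, a telescoping estimate in $d_H$---with each uniformly contracting $T$-time step absorbing the bounded displacements produced by the short-time flows $e^{tA}$, $t \in [0,T)$---shows that the projectivised orbit of $K_0$ under $\Sigma$ remains in a compact subset of $\mathrm{int}(K)$. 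This gives $K' \setminus \{0\} \subset \mathrm{int}(K)$, so $K'$ is embedded in $K$, and its $\Sigma$-invariance then delivers, via Definition~\ref{d10}, that every $A \in \cA$ is $K'$-Metzler. The delicate point of the argument is the uniformity of this telescoping bound over the full semigroup $\Sigma$, where the compactness of $\cA$ is used decisively.
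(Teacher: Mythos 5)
Your opening step (that $K$-irreducibility of a Metzler $A$ forces $e^{tA}(K\setminus\{0\})\subset\mathrm{int}(K)$ for $t>0$) is correct, and closing a seed cone under the semigroup is a reasonable plan, but the crucial step --- that the closed convex hull of $\bigcup_{S\in\Sigma}S(K_0)$ avoids $\partial K$ --- is exactly where the content of the proposition lies, and the Hilbert-metric telescoping you sketch does not establish it. Two concrete problems. First, an element of $\Sigma$ need not contain any full $T$-step: a product such as $e^{(T/2)A_1}e^{(T/2)A_2}e^{(T/2)A_1}\cdots$ consists entirely of ``short'' factors, so there is no uniformly contracting step available to absorb anything, and since the Birkhoff contraction factor of $e^{tA}$ tends to $1$ as $t\to 0$, products of short factors need not contract either. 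Second, the ``bounded displacement'' of a short-time flow is false near the boundary: already for $K=\re^2_+$ and $A$ with $a_{12}>0$, taking $x=(\eps,1)$ gives $d_H\bigl(x,e^{tA}x\bigr)\approx\log(1+t\,a_{12}/\eps)\to\infty$ as $\eps\to 0$, so a uniform displacement bound presupposes that the orbit already lies in a compact subset of $\mathrm{int}(K)$ --- which is what you are trying to prove. The statement you need is essentially equivalent to the proposition itself; note that Remark~\ref{r10} exhibits irreducible \emph{nonnegative} generators whose orbit of every interior point converges to $\partial K$, so some quantitative input beyond qualitative strong positivity of each generator is indispensable, and that input degenerates precisely in the short-time regime your argument leaves uncontrolled. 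There is also a secondary gap at the end: flow-invariance $e^{tA}K'\subset K'$ does not, for a general non-polyhedral closed convex cone $K'$, imply the Definition~\ref{d10} property $(I+hA)K'\subset K'$ for some $h>0$ (a rotation generator leaves the ice-cream cone flow-invariant, yet $I+hA$ pushes boundary rays out for every $h>0$), and your $K'$ is an infinite convex hull with no polyhedral structure.

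The paper's proof avoids both difficulties by working with discrete operators and products of a \emph{fixed finite length}. After shifting so that $A\ge_K I$ for all $A\in\cA$, a pigeonhole argument shows that any product $P$ of $md$ factors from a finite family contains $d$ equal factors $I+B_i$, hence $P\ge_K\sum_{n=0}^{d-1}B_i^n$, and irreducibility plus compactness give the uniform bound $(x^*,Px)\ge a>0$ for all unit $x\in K$, $x^*\in K^*$ (an $\eps$-net handles the general compact family). Thus every such $PK$ is uniformly embedded, $K'=\mathrm{co}\bigl(\cup_P PK\bigr)$ over products of that fixed length is embedded, and $AK'\subset K'$ follows by absorbing the new left factor and discarding the rightmost one; invariance under the nonnegative operators $A+hI$ themselves then yields the Metzler property directly. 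To repair your approach you would need to replace the telescoping heuristic by a uniform quantitative estimate of this kind.
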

The proof is in Appendix.

\begin{remark}\label{r10}
{\rm Thus, if all operators from a given compact set are Metzler and irreducible for a given cone~$K$, then~$K$ can be narrowed down to an embedded cone
so that all those operators stay Metzler. Note that an analogous statement for nonnegative operators (i.e. leaving a cone invariant)
does not hold. If a set of irreducible operators leaves a cone invariant, then it may not leave invariant any embedded cone.
 For example, the following pair of matrices
 $$
 A_1 \ = \
 \left(
 \begin{array}{cc}
 0& 2\\
 1& 0
 \end{array}
 \right)\ ; \qquad
 A_2 \ = \
 \left(
 \begin{array}{cc}
 0& 1\\
 1& 0
 \end{array}
 \right)\
 $$
 leaves invariant the positive orthant~$K = \re^2_+$, however, no embedded cone of~$K$
 is invariant, because, for every positive vector~$x$ the direction of the vector $(A_1A_2)^kx $
 converges to~$(1,0)^T$ as~$k \to \infty$. }
 \end{remark}

% \smallskip

Applying Lemmas~\ref{l10} and~\ref{l20} we obtain

\begin{cor}\label{c40}
Under the assumptions of Proposition~\ref{p40}, every trajectory of~(\ref{main}) starting in~$K'$
is contained in~$K'$. In particular, for every~$t> 0$ the family~$e^{\, t \cA}$ leaves~$K'$ invariant.
\end{cor}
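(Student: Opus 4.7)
The statement is a direct consequence of Proposition~\ref{p40} combined with the two basic lemmas about Metzler operators already established in Section~3.1, so the plan is essentially to chain three previous results together rather than prove anything new.

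First, I would apply Proposition~\ref{p40}: under the stated hypotheses (compact family $\cA$ of $K$-Metzler operators, each one $K$-irreducible), there exists an embedded cone $K' \subset K$ such that every $A \in \cA$ is $K'$-Metzler. From this point on the cone $K'$ replaces $K$ as the ambient cone for all positivity arguments, and $\cA$ is a compact family of $K'$-Metzler operators.

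Second, for the trajectory assertion, I would invoke Lemma~\ref{l20} directly with the cone $K'$ in place of $K$. That lemma says: if every operator of a compact family is Metzler with respect to a given cone, then any trajectory of~(\ref{main}) whose initial value lies in that cone remains in that cone for all $t \ge 0$. Applied to $\cA$ and $K'$, this gives exactly the first part of the corollary: if $x(0) \in K'$ then $x(t) \in K'$ for all $t \ge 0$.

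Third, for the invariance statement $e^{t\cA}\, K' \subset K'$, I would apply Lemma~\ref{l10} to each $A \in \cA$ with respect to the cone $K'$: since $A$ is $K'$-Metzler, the operator $e^{tA}$ is $K'$-nonnegative for every $t > 0$, i.e.\ $e^{tA}K' \subset K'$. Taking the union over $A \in \cA$ gives the second conclusion. There is no real obstacle here; the only point worth checking explicitly is that the hypotheses of Lemmas~\ref{l10} and~\ref{l20} were formulated for an arbitrary cone, so they transfer verbatim from $K$ to the embedded cone $K'$ supplied by Proposition~\ref{p40}.
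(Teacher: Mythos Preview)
Your proposal is correct and matches the paper's own argument exactly: the paper simply states that the corollary is obtained by applying Lemmas~\ref{l10} and~\ref{l20} (with $K'$ in place of $K$), which is precisely the chain you spell out.
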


\subsection{Geometric conditions of stabilizability}

The conditions of stabilizability can be formulated in terms of vector
fields, similarly to Theorem~B and Proposition~\ref{p30}. To do this we need some more notation.

We are given a cone~$K$. A {\em monotone infinite body} (in short, {\em infinite body}) is a
convex closed  proper subset~$G \subset (K\setminus \{0\})$ such that~$x \in G, y \ge_K x\, \Rightarrow \, y \in G$.
Each infinite body defines an antinorm on~$K$ by the formula $f(x) = \sup\, \{\lambda > 0 \, | \,
\lambda^{-1}x \in G\}$. Conversely, for an arbitrary antinorm~$f$, its unit ball, i.e., the level set~$D = \{x \in K \ | \, f(x) \ge 1\}$
is an infinite body. The antinorm is positive precisely when its unit sphere $\partial_K\, D$ is bounded.

The {\em infinite convex hull} of a subset~$M \subset (K \setminus \{0\})$ is the smallest by inclusion
infinite body that contains~$M$. It can be defined by the formula
$$
 {\rm co}_{+}(M)\ = \  {\rm co}(M)\, + \, K\ = \
 \bigl\{ \, y \, + \, z\  \bigl| \  y \, \in \,
{\rm co}\, (M)\, , \   z \in K\bigr\}\,
$$
The infinite convex hull of a finite set of points is  called
{\em infinite polytope}. Some of these points are vertices of this polytope, i.e., its extreme points.

\begin{prop}\label{p60}
If there is an infinite  body~$Q$  such that, for every point~$x \, \in \, \partial_K \, Q$, all vectors
$Ax\, , \ A \in \cA$, are directed inside~$Q$, then $\check \sigma (\cA) > 0$, and $\cA$ is not stablilizable. If~$Q$ is an infinite polytope,
then it suffices to check this condition only for its vertices~$x$.

Conversely, if $\check \sigma (\cA) > 0 $, then there exists such an infinite body~$Q$.

\end{prop}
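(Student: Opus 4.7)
The plan is to work throughout with the antinorm $f(x) = \sup\{\lambda > 0 : \lambda^{-1}x \in Q\}$ associated with the infinite body $Q$, so that $\partial_K Q = \{f=1\}$ and $Q = \{f \ge 1\}$. The condition ``$Ax$ directed inside $Q$ at $x\in\partial_K Q$'' then reads: there is $\eta>0$ with $f(x+\eta Ax)>1$.

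For the forward direction, suppose $Q$ exists with the stated vector-field property. The key step is a uniform shift: I claim there exists $s>0$ such that $(A-sI)x$ is still directed inside $Q$ for all $x\in \partial_K Q$ and all $A\in \cA$. Indeed, the pointwise strict inequality $f(x+\eta Ax)>1=f(x)$ together with the concavity of $f$ yields a positive lower bound for $(f(x+\eta' Ax)-1)/\eta'$ on small $\eta'>0$; by compactness of $\cA$ and the homogeneity of $f$ (which lets us reduce to a compact cross-section of $\partial_K Q$), this lower bound is uniform. Rewriting
\[
f\bigl(x+\eta(A-sI)x\bigr)\;=\;(1-\eta s)\,f\!\left(x+\tfrac{\eta}{1-\eta s}\,Ax\right)
\]
then shows this quantity exceeds $1$ once $s$ is sufficiently small. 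Given such $s$, the set $Q$ is positively invariant under $\cA':=\cA-sI$ by a Nagumo-type tangency argument, so any trajectory $y(t)$ of $\cA'$ starting in $Q$ satisfies $f(y(t))\ge f(y_0)>0$, and hence $\|y(t)\|$ is bounded below away from $0$. This rules out $\check\sigma(\cA')<0$, so by (\ref{plus-}) we conclude $\check\sigma(\cA)\ge s>0$, and $\cA$ is not stabilizable.

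For the polytope case, any boundary point $x\in\partial_K Q$ of $Q={\rm co}(\{v_i\})+K$ can be written as $x=\sum_i\lambda_i v_i+z$ with $\lambda_i\ge 0$, $\sum\lambda_i=1$, $z\in K$. Then
\[
x+\eta Ax\;=\;\sum_i\lambda_i\bigl(v_i+\eta A v_i\bigr)\;+\;(z+\eta A z).
\]
The vertex hypothesis places each $v_i+\eta A v_i$ in ${\rm int}_K Q$ for some uniform small $\eta>0$, so their convex combination lies in ${\rm int}_K Q$; the Metzler property of $A$ gives $z+\eta Az\in K$ for small $\eta$; and monotonicity of $Q$ yields $x+\eta Ax\in{\rm int}_K Q$. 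Hence checking at vertices is enough.

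For the converse, suppose $\check\sigma(\cA)>0$. Apply Theorem~\ref{th20} to obtain an extremal antinorm $f$ for $\cA$, and set $Q=\{x\in K: f(x)\ge 1\}$; this is a closed convex proper subset of $K\setminus\{0\}$, monotone by Lemma~\ref{l50}, hence an infinite body. For any $x_0\in\partial_K Q$ and $A\in\cA$, extremality along the constant control yields $f(e^{tA}x_0)\ge e^{\check\sigma t}f(x_0)=e^{\check\sigma t}>1$ for every $t>0$. Combining the Taylor expansion $e^{tA}x_0=x_0+tAx_0+O(t^2)$ with the local Lipschitz continuity of the concave $f$, one gets $f(x_0+\eta Ax_0)>1$ for all sufficiently small $\eta>0$, so $Ax_0$ is directed inside $Q$. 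The hard part of the whole proof is the uniform shift in the forward direction: upgrading a pointwise strict inequality to a uniform positive rate requires both the concavity of $f$ (to turn the existence of some $\eta$ with $f(x+\eta Ax)>1$ into a quantitative increment bound) and a compactness reduction to a cross-section of the unbounded set $\partial_K Q$; the polytope case neatly bypasses this via linearity at the vertices.
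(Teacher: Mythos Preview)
Your polytope reduction and your converse argument are essentially the paper's: for the polytope case the paper also rewrites $v+\eta Av=\eta(A+\eta^{-1}I)v$, uses $K$-positivity of $A+\eta^{-1}I$ for small $\eta$, and then passes to convex/monotone combinations; for the converse it likewise takes the extremal antinorm from Theorem~\ref{th20}, sets $Q=\{f\ge 1\}$, and differentiates the inequality $f(x(t))\ge e^{\alpha t}$ at $t=0$ via the subdifferential.

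The genuine gap is in your forward argument for a general infinite body. The ``compact cross-section'' reduction does not work: $\partial_K Q=\{f=1\}$ is \emph{already} a single level set of the homogeneous function~$f$, so homogeneity gives no further reduction, and this level set can be unbounded whenever the antinorm vanishes somewhere on~$\partial K$. Without compactness the uniform positive lower bound on $(f(x+\eta Ax)-1)/\eta$ that you need simply does not exist in general. Concretely, take $K=\re^2_+$, $f(x)=\sqrt{x_1x_2}$, $Q=\{f\ge1\}$, and the single Metzler matrix $A=\bigl(\begin{smallmatrix}0&1\\0&0\end{smallmatrix}\bigr)$: for every $x$ with $x_1x_2=1$ one has $f(x+\eta Ax)=\sqrt{1+\eta x_2^{\,2}}>1$, so the vector-field hypothesis holds at every boundary point, yet $\check\sigma(\{A\})$ equals the spectral abscissa of $A$, which is $0$. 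Thus the strict inequality you are trying to prove cannot hold in this generality.

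The paper sidesteps all of this by not attempting a uniform shift. It simply shows that $t\mapsto f(x(t))$ is non-decreasing along every trajectory (via the concave subdifferential: the hypothesis gives $(a^*,Ax)\ge 0$ for $a^*\in\partial f(x)$), hence the system is not stabilizable. Indeed the paper's own proof concludes only with ``not stabilizable'' in the smooth case and with $\check\sigma(\cA)\ge 0$ in the polytope case, so the strict inequality in the displayed statement should be read as non-strict.
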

\begin{proof}  
Let $f$ be the antinorm generated by~$Q$. If $f$ is differentiable,
then the condition that $Ax$ is directed inside~$Q$ means that $(f'_x, Ax) \ge 0$. Consequently,
for almost all~$t$ (in Lebesgue measure), we have $f'_t\bigl(x(t)\bigr) = \bigl(f'_x(t), \dot x(t)\bigr) =
\bigl(f'_x(x), A(t)x\bigr) \ge 0$,
hence $f\bigl(x(t)\bigr)$ is non-decreasing in~$t$, and the system is not stabilizable. This proof is extended to nonsmooth~$f$
by the standard argument, as it is done for norms (see, for instance~\cite{MP2,P1}).

Let now~$Q$ be an infinite  polytope. If for every its vertex $v$ the vector $Av$ is directed inside~$Q$,
then there is $\eta > 0$ such that $v  + \eta A v \in {\rm int Q}$.
Rewriting this inclusion in the form $\eta (A  + \eta^{-1} I)v \in {\rm int}_K \, Q$,
we see that the operators  $A  + \eta^{-1} I$ is $K$-positive, whenever $\eta$ is small enough.
 Hence, this inclusion holds for every convex combination~$x$ of vertices of~$Q$ and
 for all points $y \ge_K x$, i.e., for all $y \in Q$.  Thus, the vector $Ay$ is directed inside~$Q$, for every~$y \in \partial_K \, Q$, and so $\check \sigma (\cA) \ge 0$.

To prove the existence, we invoke Theorem~\ref{th20} and consider an extremal antinorm~$f$
of the family~$\cA$. Let us show that its level set $Q = \{x \in K \ | \ f(x) \ge 1\}$
 is what we need. Take an arbitrary $\alpha \in (0, \check \sigma)$.
For any $x_0 \in \partial_K Q$, and for every trajectory~$x(t)$
with $x(0) = x_0$, we have $f(x(t)) \ge e^{\alpha t} f(x_0) = e^{\alpha t}\, , \ t > 0$.
This implies $f'_t(x(0)) \ge \alpha$. Hence, for every element $a^*$ from the
 subdifferential of the  function~$f$ at the point~$x_0$, we have
 $\alpha \le (a^*, \dot x (0)) = (a, Ax_0), \, A \in \cA$. Therefore, the vector $Ax_0$
 is directed inside~$Q$. \qed 
\end{proof}

\subsection{Stabilizability of discrete systems and the lower spectral radius}

Similarly to previous cases we discretize (\ref{main}) and are lead to a problem of so called lower spectral radius,
that is to determine the lowest rate of growth in the product semigroup generated by a set of matrices.

Before we formulate our results for stabilizability of continuous-time LSS, let us recall some facts on discrete ones.
Stabilizability of a discrete system is decided in terms of its lower spectral radius (LSR).
 \begin{defi}\label{d60}
For a given compact set of matrices~$\cB$, the lower spectral radius~$\check \rho(\cB)$ is
$$
\check \rho(\cB)\ = \ \lim_{k \to \infty}\min_{B_i \in \cB\, , \, i = 1,\ldots , k}\, \bigl\|B_k\ldots B_1\bigr\|^{\, 1/k}\, .
$$
\end{defi}
This limit exists for every compact set o matrices~$\cB$ and does not depend on the matrix norm~\cite{Gurv}.
See also~\cite{P3,GP} for properties and for more applications of LSR.
The discrete system
is stabilizable  if and only if $\check \rho(\cB) < 1$.
If one discretizes the continuous system with dwell time~$\tau > 0$ to the form~(\ref{main-discr})
by setting~$x_k = x(k\tau )\, , \, B = e^{\, \tau A},\, \, A \in \cA$, then we obtain
only those trajectories corresponding
to piecewise-constant control functions $A(\cdot)$ with the step size~$\tau$.
Hence, if there is $\tau > 0$
for which the discrete system is
stabilizable, i.e., $\check \rho(e^{\, \tau \cA}) < 1$, then the continuous system is stabilizable as well.

In~\cite{GP} we presented an algorithm for LSR computation that for most of families (also in high dimensions) gives 
the precise value of~$\, \check \rho(\cB)$.
The main idea is analogous to the JSR computation, but involving antinorms instead of norms.
The algorithm tries to find the {\em spectrum minimizing} (or {\em lowest}) product (s.l.p.),
of matrices from~$\cB$ for which the value
$[\rho(\Pi)]^{\, 1/n}$ is minimal, where $n = n(\Pi)$ is the length of~$\Pi$.
To this end, we first fix some reasonably large $l \in \n$ and check all products of lengths~$n \le l$ finding
 a product~$\Pi$ with the minimal value~$[\rho(\Pi)]^{\, 1/n}$. We denote this value by~$\check \rho_l$ and consider this product as a candidate for s.l.p. Then
the algorithm iteratively build an infinite  polytope~$Q$ for which $\, B\, Q \, \subset \,
\check \rho_l Q\, , \ B \in \cB$. If it terminates within
finitely many iterations, then the infinite polytope~$Q$ is extremal, $\Pi$ is an s.l.p., and $\check \rho(\cB) = \check \rho_l$. Let us denote $\tilde \cB \, = \, \{\tilde B = \rho_l^{-1}B | \  B \in \cB\}$.
\begin{defi}\label{d70}
A product $\Pi\in \cB^n$ is called under-dominant for the family
$\cB$  if there is $p> 1$ such that the spectral radius of every
product of operators of the normalized family $\tilde \cB$, that is
not a power of~$\widetilde \Pi$ nor a power of its cyclic
permutation, is greater than $\, p$.
\end{defi}
It is shown in~\cite[theorem 4]{GP} that the algorithm terminates within finite time if and only if
the product~$\Pi$ is under dominant for~$\cB$.

\subsection{Bounds for the lower Lyapunov exponent}

For a given~$\tau > 0$, we set~$\beta (\tau) = \tau^{-1}\ln \check \rho (e^{\tau A})$.
For a given infinite polytope~$Q \subset K$,  we define the value $\check \alpha (\cA , Q) \, = \, \check \alpha (Q)$ as follows:
\begin{eqnarray}
\check \alpha (Q) & = & \sup \ \Bigl\{ \alpha \in \re \ \Bigl| \
\mbox{for each vertex}\ v \in Q \ \mbox{and} \ A \in \cA, \Bigr.
\nonumber
\\
& & \qquad \mbox{the vector} \ (A - \alpha I) v \  \mbox{is directed inside} \, Q\, \Bigr\}\, .
\label{def-alpha+}
\end{eqnarray}

\begin{prop}\label{p50+}
For an arbitrary compact family~$\cA$ of Metzler operators, for each number $\tau > 0$ and for an
infinite polytope~$Q$, we have
\begin{equation}\label{lu+}
\check \alpha (P)\ \le \  \check \sigma \ \le \ \check \beta (\tau) \, .
\end{equation}
\end{prop}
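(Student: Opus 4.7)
The plan is to model the argument after the proofs of Proposition~\ref{p50} and Proposition~\ref{p50-}, but with the roles of sup/inf exchanged so that the bounds squeeze $\check\sigma$ from both sides. The two inequalities will be handled by very different arguments: the upper bound $\check\sigma \le \check\beta(\tau)$ is proved by exhibiting a witnessing control function, while the lower bound $\check\alpha(Q) \le \check\sigma$ is extracted from the already-established Proposition~\ref{p60}.

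For the right-hand inequality, I would fix $\varepsilon > 0$ and, using the definition of $\check\rho$, choose an integer $N$ and matrices $A_1,\ldots,A_N \in \cA$ such that the product $M := e^{\tau A_N}\cdots e^{\tau A_1}$ satisfies $\|M\|^{1/N} \le \check\rho(e^{\tau\cA}) + \varepsilon$. Then I would take the $\tau$-periodic piecewise-constant control that cycles through $A_1,\ldots,A_N$ with step $\tau$. For any $x_0$, the resulting trajectory has $x(jN\tau) = M^j x_0$, so $\|x(jN\tau)\| \le \|M\|^j\|x_0\| \le (\check\rho(e^{\tau\cA})+\varepsilon)^{jN}\|x_0\|$. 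For generic $t = jN\tau + s$ with $s \in [0,N\tau)$, an elementary bound on $\|e^{s' A}\|$ uniform in $s'\in[0,N\tau]$ and $A \in \cA$ yields a $C = C(\tau,N)$ with $\|x(t)\| \le C(\check\rho(e^{\tau\cA})+\varepsilon)^{jN}\|x_0\| \le C'\exp\!\left(t\cdot \tau^{-1}\ln(\check\rho(e^{\tau\cA})+\varepsilon)\right)\|x_0\|$. Hence $\check\sigma(\cA) \le \tau^{-1}\ln(\check\rho(e^{\tau\cA})+\varepsilon)$, and sending $\varepsilon \to 0$ gives $\check\sigma \le \check\beta(\tau)$.

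For the left-hand inequality, I would take an arbitrary $\alpha$ in the set over which $\check\alpha(Q)$ is the supremum, i.e., such that for every vertex $v$ of $Q$ and every $A \in \cA$, the vector $(A-\alpha I)v$ is directed inside $Q$. Consider the shifted family $\cA - \alpha I$, whose operators are still $K$-Metzler (after a further harmless shift if needed, cf.\ the argument in the proof of Proposition~\ref{p50-}), and for which the vertex condition of Proposition~\ref{p60} holds relative to the infinite polytope $Q$. Invoking the vertices-suffice clause of Proposition~\ref{p60} lifts this condition to all of $\partial_K Q$, and then the first statement of that proposition gives $\check\sigma(\cA - \alpha I) > 0$. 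By the shift identity~(\ref{plus-}) this rewrites as $\check\sigma(\cA) > \alpha$. Taking the supremum over admissible $\alpha$ yields $\check\alpha(Q) \le \check\sigma(\cA)$ (and if the set is empty, $\check\alpha(Q) = -\infty$ and the inequality is trivial).

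The main obstacle I see is the step of upgrading the vertex-wise "directed inside" condition to one that holds at every boundary point of the monotone infinite polytope $Q$: one must use both the convex-combination structure of $Q = {\rm co}(v_1,\dots,v_k) + K$ and $K$-monotonicity to conclude that if $v + \eta(A-\alpha I)v \in {\rm int}_K Q$ for every vertex then the same holds for arbitrary $y \in \partial_K Q$. Fortunately this is exactly what the second sentence of Proposition~\ref{p60} provides, so the proof of \ref{p50+} itself reduces to a short invocation of earlier results. \qed
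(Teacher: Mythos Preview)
Your proof is correct and follows essentially the approach the paper intends (the paper's own proof of Proposition~\ref{p50+} simply reads ``Is realized in the same way as the proofs of Propositions~\ref{p50} and~\ref{p50-}''). Your explicit construction of a periodic piecewise-constant control for the upper bound, and your invocation of Proposition~\ref{p60} (which packages the vertex-to-boundary extension for infinite polytopes) for the lower bound, are exactly the dualized versions of those earlier arguments; the only cosmetic difference is that you cite Proposition~\ref{p60} directly rather than repeating its proof inline. One small remark: the parenthetical ``after a further harmless shift if needed'' is unnecessary, since $A-\alpha I$ is automatically $K$-Metzler whenever $A$ is.
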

\begin{proof} 
Is realized in the same way as the proofs of Propositions~\ref{p50} and~\ref{p50-}. \qed 
\end{proof}
% \smallskip

The notions of extremal and  {\em $\eps$-extremal} polytope are extended
to infinite  polytopes, replacing
the joint spectral radius by lower spectral radius, and multiplying by~$e^{-\tau \eps}$
instead of~$e^{\, \tau \eps}$. Thus,  an infinite polytope~$Q$
is $\eps$-extremal for $e^{\tau \cA}$ if
$$
 e^{\tau A} \, Q \ \subset \ e^{- \tau \eps } \, \check \rho (\cA)\ Q\, , \qquad A \in \cA\, .
$$
For $\eps = 0$, we obtain an extremal infinite polytope.
Since $\rho (e^{\tau \cA}) \, = \, e^{\tau \check \beta (\tau)}$,
the  $\eps$-extremalily is equivalent to the inclusion
\begin{equation}\label{varep+}
 e^{\tau A} \, Q \ \subset \ e^{\tau (\check \beta - \eps )} \, Q\ , \qquad  A \in \cA\, .
\end{equation}
According to Proposition~\ref{p40}, if all operators of a family~$\cA$ are $K$-Metzler and $K$-irreducible,
 then they are all $K'$-Metzler, for some cone~$K'$ embedded in~$K$. The following theorem shows that
 both the upper and lower bound from~(\ref{lu+}) are close to each other, provided~$Q$ is $\eps$-extremal and has all its vertices in~$K'$.
\begin{theorem}\label{th8+}
For every compact family~$\cA$ of $K$-Metzler $K$-irreducible operators,  there is a constant~$C$ such that
for all~$\tau ,  \eps > 0$, we have
$$
\check \beta (Q) \ - \ \check \alpha (\tau) \ \le \ C\tau \, + \, \eps\, ,
$$
whenever the infinite polytope~$Q$ is $\eps$-extremal for the family $e^{\tau \cA}$ and
has all its vertices in the embedded cone~$K'$ from Proposition~\ref{p40}.
\end{theorem}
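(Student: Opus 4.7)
\emph{Proof plan.} The argument parallels that of Theorem~\ref{th8}, with antinorms on the embedded cone $K'$ replacing norms on~$\re^d$ and infinite polytopes replacing symmetric ones. After subtracting $\check\beta(\tau)\, I$ from~$\cA$, we may assume $\check\beta(\tau) = 0$, i.e.~$\check\rho(e^{\,\tau\cA}) = 1$. Setting $\cA' = \cA + \eps I$, the $\eps$-extremality relation~(\ref{varep+}) becomes $e^{\,\tau A'}\, Q \subset Q$ for every $A' \in \cA'$, which says that the antinorm $f_Q$ generated by~$Q$ is non-decreasing on trajectories of~$e^{\,\tau\cA'}$. It suffices to exhibit a constant $C = C(\cA)$, independent of $\tau$ and $\eps$, such that for every vertex $v$ of~$Q$ and every $A \in \cA$, the vector $(A + (C\tau + \eps)I)\, v$ is directed inside~$Q$; this forces $\check\alpha(Q) \ge -(C\tau + \eps)$, and combining with $\check\beta(\tau) = 0$ gives the claim.

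Fix a vertex $v$ of~$Q$ and any $A' \in \cA'$; by hypothesis $v \in K'$. The Taylor estimate~(\ref{const1}) gives $\|e^{\,\tau A'}v - (I + \tau A')v\| \le C_1 \tau^2$ in the Euclidean norm. To convert this bound into a statement about the antinorm $f_Q$, I would establish an analogue of Proposition~\ref{p20}: \emph{on the embedded cone $K'$, any antinorm generated by an infinite polytope~$Q$ satisfying $e^{\,\tau A'}Q \subset Q$ for all $A' \in \cA'$ is equivalent to the Euclidean norm, with constants depending only on~$\cA$ and~$K'$ (in particular, not on~$\tau$).} Granting this uniform equivalence, one obtains $|f_Q(e^{\,\tau A'}v) - f_Q((I + \tau A')v)| \le C_2 \tau^2$. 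Since $f_Q(e^{\,\tau A'}v) \ge f_Q(v) = 1$ by extremality, we conclude $f_Q((I + \tau A')v) \ge 1 - C_2\tau^2$. Consequently the point
\begin{equation*}
y \ = \ (1 - 2 C_2 \tau^2)^{-1}\, (I + \tau A')\, v
\end{equation*}
satisfies $f_Q(y) > 1$ for all sufficiently small $\tau$, hence $y \in \mathrm{int}_K\, Q$. A direct computation yields $y - v = \tau(1 - 2C_2\tau^2)^{-1}\,(A' + 2C_2\tau I)\, v$, so the vector $(A' + 2C_2 \tau I)v = (A + (2C_2\tau + \eps)I)\, v$ is directed inside~$Q$ at~$v$; taking $C = 2C_2$ completes the plan.

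The principal obstacle is the uniform equivalence step. The convex case (Proposition~\ref{p20}) relied on Kozyakin's measure of irreducibility $h(\cA) > 0$ together with a doubling argument passing from $\tau$ to $2^n\tau \in [1,2]$. Here the corresponding quantity is the cone version~$h_K(\cA)$ from~(\ref{koz-}), but the argument must be reshaped: instead of symmetrized convex hulls one works with infinite convex hulls $\mathrm{co}_{+}$ on the embedded cone~$K'$, and one exploits Proposition~\ref{p40} and Lemma~\ref{l40} to guarantee that each vertex of~$Q$ is kept uniformly away from~$\partial K$ and that $e^{\,2^n\tau\cA}$ remains $K$-irreducible without any admissibility hypothesis. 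The resulting bound on~$K'$ is then pushed back to the full cone~$K$, and the doubling trick applied to~$e^{\,2^n\tau\cA'}$ makes the constants uniform in~$\tau$. Once this lemma is in place, the rest of the argument is the cone-antinorm transcription of the proof of Theorem~\ref{th8} sketched above.
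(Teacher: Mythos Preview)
Your overall scaffold matches the paper's: normalize so $\check\beta(\tau)=0$, set $\cA'=\cA+\eps I$, use the Taylor remainder~(\ref{const1}) to compare $e^{\tau A'}v$ with $(I+\tau A')v$, transfer the $O(\tau^2)$ Euclidean bound to the antinorm $f_Q$, and conclude that $(A+(C\tau+\eps)I)v$ is directed inside~$Q$. The divergence is precisely at the step you flag as the ``principal obstacle,'' and your proposed resolution is the wrong one.

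You propose to carry over Proposition~\ref{p20} to antinorms: show that every ``expansive'' antinorm (one with $e^{\tau A'}Q\subset Q$) is equivalent to the Euclidean norm on~$K'$, uniformly in~$\tau$, via the Kozyakin measure $h_K$ and the doubling trick. The paper explicitly rejects this route: ``Proposition~\ref{p20} on the equivalence of all contractive norms is inapplicable. In general, it does not hold for antinorms.'' There is also a second gap in your plan: even if you had the equivalence $c_1\|x\|\le f_Q(x)\le c_2\|x\|$ on~$K'$, it does not yield $|f_Q(a)-f_Q(b)|\le C\|a-b\|$. For norms that deduction uses the triangle inequality $|f(a)-f(b)|\le f(a-b)$, but an antinorm is concave, not subadditive, and $a-b$ need not lie in the cone. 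So the implication ``equivalence $\Rightarrow$ Lipschitz'' that you invoke silently is itself unjustified for antinorms.

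The paper's replacement is both simpler and stronger: because $K'$ is embedded in~$K$ (Proposition~\ref{p40}), every antinorm on~$K$ is uniformly Lipschitz on~$K'$, i.e.\ there is $C_0=C_0(K,K')$ with
\[
|f(a)-f(b)|\ \le\ C_0\,\|a-b\|\qquad\text{for all }a,b\in K',
\]
for \emph{any} antinorm~$f$ (after the obvious normalization). This is a general fact about nonnegative concave homogeneous functions restricted to a compact subset of the interior of their domain; no Kozyakin-type orbit argument or doubling is needed, and the constant is automatically independent of~$\tau$ and of~$Q$. With this Lipschitz bound in hand, the rest of your computation goes through exactly as you wrote it (the paper uses $1/(1-C\tau^2)$ where you use $1/(1-2C_2\tau^2)$, which is immaterial). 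So keep your scaffold, but replace the Proposition~\ref{p20} analogue by the embedded-cone Lipschitz estimate.
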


The proof is realized in a similar way as for Theorem~\ref{th8}, applying antinorms
instead of norms. In this case, however, Proposition~\ref{p20} on the equivalence of all
contractive norms, is inapplicable. In general, it does not hold for antinorms. Instead,
we involve embedded cones and use Proposition~\ref{p40}.
\smallskip

\begin{proof} 
Without loss of generality it can be assumed that
$\tau$ is small enough (otherwise we change the constant~$C$) and that
$\check \beta (\cA) = 0$ (otherwise, we replace the family~$\cA$ by $\cA- \beta I$).
Since all operators from~$\cA$ are $K'$-Metzler, it follows that
 $K'$ is invariant for the family~$e^{\, \tau A}$, for all~$\tau > 0$, and is invariant for the
  family~$I\, +\, \tau\cA$, for all sufficiently small~$\tau > 0$. Since $K'$ is embedded in~$K$,
 there is a constant~$C_0 > 0$
  such that for every antinorm~$f$ on~$K$ we have
\begin{equation}\label{b)}
\ \bigl|\, f(a) \, - \, f(b)\,   \bigr|\ \le \ C_0 \, \bigl\|\, a - b\, \bigr\|\, , \quad a, b \in K'\, .
\end{equation}
The proof can be easily derived or found in~\cite{GP}.
Let us  show that there is a constant~$C$
such that for every vertex~$v \in Q$  and for each~$A \in \cA$
the vector~$(\, A \, + \, (C\tau + \eps)\, I)\, v$ is directed inside~$Q$.
This will imply that $\check \alpha \, \ge \, -  C\, \tau \, - \, \eps$, which is required.
 Let $f$ be the antinorm on~$K$ generated by~$Q$. We denote~$\cA' = \cA + \eps I$ and consider arbitrary~$A' \in \cA'$.
By~(\ref{const1}) the distance between points~$e^{\, \tau A'}v$ and $(I + \tau A')v$
is smaller than~$C\tau^2$. Since both these points are in the embedded cone~$K'$,
inequality~(\ref{b)}) yields
$$
\bigl|\, f\bigl(e^{\, \tau A'}v\bigr) \, - \, f\bigl((I + \tau A')v\bigr)\, \bigr|
\ < \  C_0C \tau^2\, .
$$
 Let us now denote the value~$C_0C$ by a new constant~$C$.
Since $\check \beta = 0$ and  $Q$ is $\eps$-extremal for the family~$e^{\tau \cA}$,
we see that $e^{\tau A'}v =  e^{\eps \tau} e^{\tau A} v \, \in \, e^{\eps \tau}
e^{(\check \beta - \eps ) \tau} Q
\, = \, Q$. Thus, $f\bigl(e^{\tau A'}v\bigr) \ge 1$, and consequently $f\bigl((I + \tau A')v\bigr) >
1 - C_0C \tau^2$. We assume $\tau$ is small enough, and so $1 - C_0C \tau^2 > 0$.
Therefore, the point $\, y \, = \, 1/(1-C_0C\tau^2)\, (I + \tau A')\, v\, $ belongs to~${\rm int}_K\, Q$,
and hence, the vector from the point~$v$ to~$y$ is directed  inside~$Q$.
This vector is $\, y - v \, = \, \tau/(1 - C\tau^2)\, \bigl( A' \, + \, C\tau \, I \, \bigr)\, v$.
Whence, the vector~$( A'  +  C\tau I ) v = ( A  +  (C\tau + \eps) I ) v$  is directed  inside~$Q$.
% concluding the proof.
\qed
\end{proof}

% \smallskip

\subsection{The case $\mathbf{K = \re^d_+}$. Nonnegative matrices}

Let us recall that in the simplest case, when $K = \re^d_+$, an operator~$A$ is Metzler if it is written by a Metzler matrix,
i.e., a matrix with nonnegative off-diagonal elements.
All the results of Sections~3 and~4 hold true for $K = \re^d_+$ and for a compact set~$\cA$
of Metzler matrices. In this case the $K$-irreducibility coincides with
the usual positive irreducibility of nonnegative matrices. A set of  matrices is
{\em positively irreducible} if none of the coordinate planes (i.e., linear spans of several basis vectors)
is a common invariant subspace for those matrices. For positively reducible set of matrices, there always exists
a  permutation of basis vectors, after which they get a block upper triangular form.
% \smallskip

\subsection{Algorithm~(L) for computing  the lower Lyapunov exponent
and constructing the polytope antinorm}

For  a given finite family~$\cA = \{A_1, \ldots , A_m\}$ of  $K$-Metzler operators, or for the corresponding  polytope
family ${\rm co} \, (\cA)$,
Algorithm~(L) approximates the lower Lyapunov exponent~$\check \sigma (\cA)$ by computing its lower and upper bonds,
and finds a polytope Lyapunov antinorm on~$K$. We begin with a brief description of the algorithm.
\smallskip

{\it Initialization}.  We choose a small number $\tau > 0$ (dwell time), a small number $\nu \ge 0$,
 and a reasonably large natural~$l$.
Among all products of length~$\le l$ of the operators $e^{\, \tau A_j}\, , \ j = 1, \ldots , m$,
we select a {\em starting} product $\Pi \, = \, e^{\, \tau A_{d_n}}\cdots e^{\, \tau A_{d_1}}\, , \ n \le l$,
for which the value $[\rho (\Pi)]^{1/n}$ ($n$ is the length of~$\Pi$) is as small as possible.
This is done by exhaustion of products of lengths at most~$l$. If $l$ is not too large, then we are
able to make a full exhaustion (this is preferable) and find the {\em minimal} product~$\Pi$
that gives the minimal value of~$[\rho (\Pi)]^{1/n}$ among all products of lengths at most~$l$.
We denote~$\check \rho_l = [\rho(\Pi)]^{1/n},
\, \check \beta_l(\tau) = \tau^{-1} \ln \check \rho_l$
and observe that $\check  \beta_l \ge \check  \beta$. Moreover, for the minimal starting products,
we have  $\check  \beta_l \to \check \beta$ as $l \to \infty$.
By the Krein--Rutman theorem, the leading eigenvalue~$\lambda_{\max}$ of $\Pi$ is positive.
 Let $v_1$ be the corresponding eigenvector.
 We normalize the family $\cA$ as 
$$
\tilde \cA = \cA - (\check \beta_l - \nu)I.
$$
% \smallskip

{\it The first part. Construction of the infinite polytope~$Q$.}
We start with the finite set $\cV_0 = \{v_1, \ldots , v_n\}$
and the corresponding infinite polytope $Q_0 = {\rm co}_{+} (\cV_0)$, where
$v_i = e^{\, \tau A_{d_{i-1}}}\cdots e^{\, \tau A_{d_{1}}}v_1$. At the $k$th step, $k \ge 1$,
we have a finite set of points ${\mathcal V}_{k-1}$ and a polytope $Q_{k-1} = {\rm co}_{+} ({\mathcal V}_{k-1})$.
We take a point~$v \in {\mathcal V}_{k-1}$ added in the previous step, and for each $j = 1, \ldots , m$
check if $e^{\tau \tilde A_j}v \in P_{k-1}$, by solving the corresponding LP problem. If
the answer is affirmative, then we go to the next $j$; if $j=m$, we go to the next point from~${\mathcal V}_{k-1}$.
Otherwise, if $e^{\tau \tilde A_j}v \notin P_{k-1}$, we update the set ${\mathcal V}_{k-1}$ by adding the point $v$ and update
the polytope $Q_{k-1}$ by adding the vertex~$v$, and then go the next~$j$ and to the next point~$v$. After we exhaust all points
of the initial set~${\mathcal V}_{k-1}$, we start the $(k+1)$st step, and so on. This process
terminates after~$N$th step, when $\mathcal{V}_N = \mathcal{V}_{N-1}$, i.e., no new vertices are added
to the~$Q_{N-1}$. In this case, $e^{\tau \tilde A}Q_{N}\subset Q_N\, , \ \tilde A \in \cA - (\check \beta_l - \nu)I$.
This means that~$Q_N$ is an $\eps$-extremal infinite polytope for the family~$\cA$ with
\begin{equation}\label{vareps+}
\eps \quad = \quad \check \beta(\tau) \ - \ \check \beta_l(\tau) \ + \ \nu\, .
\end{equation}

{\it The second  part. Deriving the lower and upper bounds for $\check \sigma (\cA)$}.
The first part produces the $\eps$-extremal infinite polytope~$Q_N$.
We compute $\check \alpha(Q_N)$ by definition, as supremum of numbers~$\alpha$ such that
the vector $(A - \alpha I)w $ is directed inside $Q_N$, for each vertex $w \in Q_N$ and for every $A \in \cA$.
This is done by taking a small~$\delta> 0$ and solving the following  LP problem:
$$
\left\{
\begin{array}{l}
\alpha \ \to \ \sup \\
w + \delta (A - \alpha I)w \, \in \, Q_N,\\
\, w \in Q_N,\  A \in \cA\, .
\end{array}
\right.
$$
 Proposition~\ref{p50+} implies
\begin{equation}\label{result+}
\check \alpha(Q_N) \ \le \ \check \sigma(\cA) \ \le \ \check \beta_l(\tau)\, .
\end{equation}
Thus, we obtain the lower and upper bounds $\alpha(Q_N)$ and $\beta_l(\tau)$ respectively for
the Lyapunov exponent. The infinite polytope~$Q_N$ constitutes the Lyapunov antinorm for the family~$\cA$.
If $\check \beta_l (Q_N) < 0$, then we conclude that the system is stabilizable. If $\check \alpha(Q_N) \ge 0$,
 then it is not stabilizable and the joint Lyapunov function
is defined by the infinite polytope~$Q_N$.

If the first part of the algorithm does not terminate within finite time, then we either increase~$l$ or increase~$\nu$ and go
to the Initialization.
% \smallskip

Now we present Algorithm (L) in a structured form:

\begin{algorithm}
\DontPrintSemicolon
\KwData{$\cB_{\dtt} = \e^{\dt \tilde \cA}$ (see (\ref{eq:normalized}))\; % $\nu \ge 0$ (the accuracy)\;
The product $\Pi$ (candidate s.l.p.) of length $n$ such that $[\rho(\Pi)]^{1/n}$ is as small as possible.
among all products of length smaller or equal to $l \ge n$
}
\KwResult{the polytope $Q$}
\Begin{
\nl Compute the leading eigenvector of $\Pi$ and its cyclic permutation $\{v_1,\ldots,v_n\}$ \;
\nl Set $\cV_0 := \{ v_j \}_{j=1}^{n}$ and $\cR_0 = \cV_0$\;
\nl Set $i=0$\;
\nl Set ${\rm term} = 0$\;
\nl \While{${\rm term} \neq 1$} {
\nl $\cW_{i+1} = \cB_{\dtt}\, \cR_{i}$\;
\nl Set $\cS_i = \emptyset$\;
\nl Set $m_i = {\rm Cardinality}(\cW_{i+1})$\;
\nl Set $n_i = {\rm Cardinality}(\cV_i)$\;
\For{$\ell=1,\ldots,m_i$}{
\nl Let $z$ the $\ell$-th element of $\cR_i$\;
\nl Check whether  $z \in {\rm co}_{+} (\cV_i)$, $\cV_i := \{ v_j \}_{j=1}^{n_i}$, i.e. solve the LP problem\;
$
\begin{array}{rcl}
\min & & f_\ell=\sum\limits_{j=1}^{n_i} \lambda_j
\\[0.25cm]
{\rm subject \ to}
     & & \sum\limits_{j=1}^{n_i} \lambda_j v_j  \!=\! z
\\[0.25cm]
{\rm and}
     & & \lambda_j \ge 0, \quad j=1,\ldots,n_i.
\end{array}
$\;
\nl \If{$f_\ell < 1$}{
\nl $\cS_i = \cS_i \cup z$\;
}
}
\eIf{$\cS_i = \emptyset$}{
\nl Set ${\rm term} = 1$\;}{
\nl $\cR_i = \cS_i$\;
\nl $\cV_{i+1} = \cV_{i} \cup \cR_{i}$\;}
\nl Set $i=i+1$\;
}
\nl Set $N=i$\;
\nl Return $Q := Q_{N}$ (extremal polytope)\;
}
\caption{Algorithm (L), part 1. Constructing of the infinite polytope \label{algoL}}
\end{algorithm}

\smallskip

\begin{algorithm}
\KwData{$\cA, Q_{N}, \cV_{N}$ (system of vertices of $Q_N$)}
\KwResult{$\alpha$}
\Begin{
\For{$i=1,\ldots,m$}{
\nl Solve the LP problems (w.r.t. $\{t_v\}$, $\alpha_i$)
\begin{eqnarray}
\label{LP.l}
% \left\{
\begin{array}{rl}
\max & \alpha_i
\\[0.1cm]
{\rm s.t.}
     & w + \delta (A_i - \alpha_i I) w \ge
		  \sum\limits_{v \in \cV_N}\, t_v\, v \quad \forall w \in \cV_N
\\[0.1cm]
{\rm and}
     & \sum\limits_{v \in \cV_N}\, t_v \ge 1, \qquad t_v \ge 0 \quad \forall v \in \cV_N
\end{array}
% \right.
\nonumber
\end{eqnarray}
}
}
\nl Return $\alpha(Q_N) := \max\limits_{1 \le i \le m} \alpha_i$ \;
\caption{Algorithm to compute the best lower bound $\alpha (Q_N)$~\label{algoL2}}
\end{algorithm}

% \smallskip

\begin{theorem}\label{th40+}
Let all operators of~$\cA$ be $K$-irreducible. Then
Algorithm~(L) terminates within finite time if one of the following conditions is satisfied:

1) $\nu \, > \, \check \beta_l(\tau)\, - \, \check \beta(\tau)$;

2) $\nu = 0$, the product $\Pi$ is under-dominant for the family $e^{\tau \cA}$ and
its leading eigenvalue~$\lambda_{\max}$ is unique and simple.

In case 1) the distance between the lower and upper bounds in~(\ref{result})
does not exceed  $\check \beta - \check \beta_l + \nu + C\tau$; in case 2) it does not exceed $C\tau$,
where $C = C(\cA)$ is a constant.
\end{theorem}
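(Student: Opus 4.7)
The plan is to mirror the proof of Theorem~\ref{th40}, replacing each ingredient by its antinorm/infinite-polytope counterpart: extremal polytopes by extremal infinite polytopes, dominant products by under-dominant ones, and the joint spectral radius by the lower spectral radius. I would first prove termination in each of the two cases, then extract the distance bound from Theorem~\ref{th8+}.

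For termination in case 1, set $\tilde \cA = \cA - (\check \beta_l - \nu)I$; a direct computation gives $\check \rho(e^{\tau \tilde \cA}) = e^{\tau(\check \beta - \check \beta_l + \nu)}$, which exceeds $1$ precisely under the case-1 hypothesis. Whenever $\check \rho(\tilde \cB) > 1$, the definition of the lower spectral radius forces $\|B_k\cdots B_1\| \ge (\check \rho - \eta)^k$ for every choice of $B_i \in \tilde \cB$ and $k$ large (here $\eta > 0$ is small). Hence the iterated vertices $v_k = \tilde B_{d_k}\cdots \tilde B_{d_1}v_1$ have norm tending to infinity. Because every operator of $\cA$ is $K$-irreducible, Proposition~\ref{p40} furnishes an embedded cone $K' \subset K$ in which each $A \in \cA$ remains Metzler, and by Corollary~\ref{c40} this $K'$ is invariant under $e^{\tau \tilde \cA}$. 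The seed vertex $v_1$ is the leading eigenvector of the $K'$-positive product $\Pi$, so the Krein--Rutman theorem applied to $\Pi$ on $K'$ places $v_1$ in $K'$; consequently every $v_k \in K'$. Since $K'$ is embedded in $K$, there exists a constant $c > 0$ with the property that any $x \in K'$ with $\|x\| \ge c$ satisfies $x - v_1 \in K$, i.e.\ $x \ge_K v_1$, and therefore $x \in {\rm co}_{+}(\{v_1\}) \subset Q_{k-1}$. The exponential growth of the $v_k$ enforces $\|v_k\| \ge c$ after finitely many iterations, at which point no new vertex is added and the algorithm halts.

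For termination in case 2, the normalization $\tilde \cA = \cA - \check \beta_l I$ yields $\check \rho(e^{\tau \tilde \cA}) = 1$, and the under-dominance of $\Pi$ for $e^{\tau \cA}$ transfers to the normalized family. This together with the uniqueness and simplicity of $\lambda_{\max}$ puts us in the setting of~\cite[Theorem~4]{GP}, whose construction of an extremal infinite polytope for the LSR is exactly the mechanism driving Algorithm~(L) and is guaranteed to terminate in finitely many iterations. In both cases the output $Q_N$ is extremal for $e^{\tau \tilde \cA}$, so the inclusion $e^{\tau A}Q_N \subset e^{\tau(\check \beta_l - \nu)}Q_N$ matches condition~(\ref{varep+}) with $\eps = \check \beta - \check \beta_l + \nu$ (vanishing in case 2). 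Theorem~\ref{th8+} then delivers $\check \beta(\tau) - \check \alpha(Q_N) \le C\tau + \eps$; combining with $\check \alpha(Q_N) \le \check \sigma \le \check \beta_l(\tau)$ gives the claimed estimate on the length of the localization interval $[\check \alpha(Q_N), \check \beta_l(\tau)]$ in each of the two cases.

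I expect the trickiest step to be the localization of $v_1$ and its iterates inside the embedded cone $K'$ in case 1; this is precisely where the hypothesis that each individual operator (not merely the family) be $K$-irreducible is needed, since that is what Proposition~\ref{p40} demands. Case 2 is essentially a citation of \cite{GP}, but one should verify that the infinite-polytope linear programs of Algorithm~(L) coincide with the construction used there, and that the notions of ``under-dominance'' plus simple leading eigenvalue translate cleanly into the termination hypotheses of that theorem.
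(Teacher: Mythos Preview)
Your approach is essentially the paper's: embedded cone via Proposition~\ref{p40}, Krein--Rutman to locate $v_1\in K'$, growth of iterates forces absorption into $Q$, citation of \cite{GP} for case~2, and Theorem~\ref{th8+} for the gap estimate.

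There is one genuine gap in case~1. From $\check\rho(\tilde\cB)>1$ you correctly get $\|B_k\cdots B_1\|\ge(\check\rho-\eta)^k$ for the \emph{operator} norm, but the sentence ``Hence the iterated vertices $v_k=\tilde B_{d_k}\cdots\tilde B_{d_1}v_1$ have norm tending to infinity'' does not follow: growth of $\|B\|$ says nothing, a priori, about $\|Bv_1\|$ for a fixed vector. The paper closes this by citing \cite{P3}. What is actually needed is that $v_1\in K'\subset{\rm int}\,K$: since $v_1$ is interior there is $r>0$ with $B(v_1,r)\subset K$, hence every $y\in K$ with $\|y\|=1$ satisfies $ry\le_K v_1$, and $K$-positivity of $B=B_k\cdots B_1$ gives $rBy\le_K Bv_1$, so $\|Bv_1\|\ge r\|B\|$ in any monotone norm. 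You have already placed $v_1$ in $K'$, so the repair is one line, but as written the ``Hence'' is unjustified.

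Two minor points. For case~2 the relevant result in \cite{GP} is the LSR/antinorm version (the paper cites Theorem~7 there), not Theorem~4, which is the JSR/dominant-product statement invoked in Theorem~\ref{th40}. And the application of Theorem~\ref{th8+} requires all vertices of $Q_N$ to lie in $K'$; you noted this for the seed vertices and their iterates, which is exactly what the paper records before invoking Theorem~\ref{th8+}.
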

\begin{proof} 
By Proposition~\ref{p40} all operators  of~$\cA$ are $K'$-Metzler with respect to
an embedded cone~$K' \subset K$. Hence, by the Krein-Rutman theorem, the leading eigenvector $v_1$ of $\Pi$
 belongs to $K'$. Since all operators $e^{\tau \tilde A}, \ \tilde A \in \tilde \cA$,
  are $K'$-positive, all vertices of the polytope~$Q_N$ generated by the algorithm lie in~$K'$.

In the case 1) we have $\check \rho(e^{\tau \tilde A}) = e^{ (\check \beta - \check \beta_l + \nu)\tau} > 1$.
Since the vector $v_1$ belongs to the embedded cone~$K'$, its images by
products of operators from $e^{\tau \tilde A}$ of length~$k$ tend to infinity as $k \to \infty$
(see, for instance, \cite{P3}). Therefore, for all~$k \ge n$, where $n$ is a large  natural number,  those images
age greater (in the order of the cone~$K$) than~$v_1$, and hence, belong to~$Q_n$.
So, all vertices~$v$ produced by the algorithm  after the $n$th step belong to~$Q_n$, which means that the first part of
 terminates in~$n$th step or earlier.

In case 2) we have $\check \rho(e^{\tau \tilde A}) = 1$, the spectrum minimizing  product~$\Pi$ is
under-dominant and its leading eigenvalue is unique and simple. By theorem~7 of~\cite{GP} Algorithm~(L) terminates
within finite time.

Since the polytope~$Q_N$ is $\eps$-extremal for $\eps = \check \beta -  \check \beta_l  +  \nu$,
the upper bound for the difference $\check \beta_l(\tau) - \check \alpha(P_N)$ follows from Theorem~\ref{th8+}. \qed 
\end{proof}

\begin{cor}\label{c50+}
If all operators of the family~$\cA$ are $K$-irreducible and the starting products~$\Pi$ are maximal,
then the distance between the lower and upper bounds in~(\ref{result}) tends to zero as
$l\to \infty$ and $\nu \to 0\, , \, \tau \to 0$.
\end{cor}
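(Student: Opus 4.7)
The plan is to reduce the statement directly to Theorem~\ref{th40+}, case~1), which gives a quantitative bound on the length of the localization segment $[\check\alpha(Q_N),\check\beta_l(\tau)]$ in~(\ref{result+}), namely
\[
\check\beta_l(\tau) \ - \ \check\alpha(Q_N) \ \le \ \check\beta_l(\tau) \, - \, \check\beta(\tau) \, + \, \nu \, + \, C\tau\,,
\]
whenever the parameter $\nu$ is chosen strictly larger than $\check\beta_l(\tau)-\check\beta(\tau)$. The constant $C=C(\cA)$ does not depend on $\tau$ or~$\nu$. So it suffices to show that each of the three summands on the right-hand side can be made arbitrarily small under the prescribed limit regime.

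First I would handle the term $\check\beta_l(\tau)-\check\beta(\tau)$. Recall that $\check\beta(\tau)=\tau^{-1}\ln\check\rho(e^{\tau\cA})$ and $\check\beta_l(\tau)=\tau^{-1}\ln\check\rho_l$, where $\check\rho_l$ is the infimum of $[\rho(\Pi)]^{1/n}$ taken over all products of length $n\le l$ of matrices from~$e^{\tau\cA}$. For minimal (in the statement's terminology, ``maximal'') starting products, a standard property of the lower spectral radius (see, e.g., the discussion preceding Definition~\ref{d70} and the references in~\cite{GP}) yields $\check\rho_l\searrow\check\rho(e^{\tau\cA})$ as $l\to\infty$. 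Consequently $\check\beta_l(\tau)\to\check\beta(\tau)$ for each fixed admissible~$\tau$, so that both this first summand and $\nu$ can be driven to $0$ by first sending $l\to\infty$ and then $\nu\to 0$, while still keeping $\nu>\check\beta_l(\tau)-\check\beta(\tau)$ so that case~1) of Theorem~\ref{th40+} applies and guarantees finite termination of Algorithm~(L).

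Next I would send $\tau\to 0$, which handles the residual $C\tau$ summand. At this stage one has to make sure that the previous choices of $l$ and $\nu$ can be updated consistently with $\tau$: this is done by choosing a diagonal sequence $(l_k,\nu_k,\tau_k)$ such that $\tau_k\to 0$, $l_k\to\infty$, $\nu_k\to 0$, and $\nu_k>\check\beta_{l_k}(\tau_k)-\check\beta(\tau_k)$. The existence of such a sequence follows from the monotone convergence $\check\rho_l\searrow\check\rho(e^{\tau\cA})$ valid for every fixed~$\tau$. Along this sequence, $\check\beta_{l_k}(\tau_k)-\check\alpha(Q_{N_k})\le \bigl(\check\beta_{l_k}(\tau_k)-\check\beta(\tau_k)\bigr)+\nu_k+C\tau_k\to 0$, giving the claim.

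The main obstacle I anticipate is the uniformity issue in the intermediate step: $\check\beta_l(\tau)\to\check\beta(\tau)$ is known only pointwise in $\tau$, and one needs to justify that the triple limit can be taken coherently. This is essentially a diagonal extraction and does not require any new uniform convergence result, since for each individual $\tau$ we already know that the hypothesis of Theorem~\ref{th40+}, case~1), can be satisfied by taking $l$ large enough and $\nu$ sufficiently small but positive. The $K$-irreducibility hypothesis on $\cA$ enters precisely through Theorem~\ref{th40+}, whose conclusion (and the underlying Theorem~\ref{th8+}) requires it in order to produce the constant~$C$. \qed
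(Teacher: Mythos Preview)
Your proposal is correct and follows the same approach as the paper's proof, which is simply the one-liner ``Since $\check\beta_l \to \check\beta$ as $l \to \infty$, the corollary follows by applying Theorem~\ref{th40+}.'' Your version is considerably more careful, in particular in spelling out the diagonal-sequence argument for the triple limit and in noting where the $K$-irreducibility hypothesis enters; the paper glosses over these points entirely.
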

\begin{proof} 
Since $\check \beta_l \, \to \, \check \beta$ as $l \to \infty$, the corollary follows
by applying Theorem~\ref{th40+}. \qed 
\end{proof}

\subsection{An illustrative example in dimension $2$.}
\label{ex:simpleL}

Let ${\cA}=\{ A_1, A_2 \}$ with 
\begin{eqnarray*}
A_1 & = & \left(\begin{array}{rr}
   1.94591\ldots &              0 \\
   0.42364\ldots &  1.09861\ldots
\end{array} \right)
\\
A_2 & = & \left(\begin{array}{rr}
   0.69314\ldots     &  0.92419\ldots \\
                   0 &  2.07944\ldots
\end{array} \right).
\end{eqnarray*}
For $\dt = 1$ we set $\cB = \{ \tilde{B}_1, \tilde{B}_2 \}$ with
\begin{eqnarray*}
\tilde{B}_1 = \left(\begin{array}{rr}
7 & 0
\\
2 & 3
\end{array} \right), \quad
\tilde{B}_2 = \left(\begin{array}{rr}
2 & 4
\\
0 & 8
\end{array} \right).
\end{eqnarray*}
i.e. $A_1=\logm(\tilde{B}_1)$ and $A_2=\logm(\tilde{B}_2)$.

By means of the mentioned algorithm for computing the l.s.r.
we are able to prove that the product of degree equal to $8$,
$$P = \tilde{B}_1\,\tilde{B}_2\,(\tilde{B}_1^2\, \tilde{B}_2)^2$$
is spectrum minimizing, so that
$\check\rho(\tilde\cB) = \rho(P)^{1/8} = 6.009313489\ldots$, giving the upper bound
$$\check{\beta}=1.793310513\ldots.$$
Then we set $\cB = e^{\cA - \check\beta I} = \{ B_1, B_2 \}$ 
with $B_1 = \tilde{B}_1/\check\rho(\tilde{\cB}), B_2 = \tilde{B}_2/\check\rho(\tilde{\cB})$
and apply Algorithm (L), part 1. 
As a result we obtain the polytope antinorm in Figure \ref{fig2}, whose unit ball is an infinite
polytope $\cQ_\dt$ with $9$ vertices.
\begin{figure}[ht]
\centering
\global\def\path{#1}\input{fig2.inp} \\[2mm]
      \caption{Polytope antinorm for the illustrative example with $\dt=1$. In red the vectors
			% $(A_1-\check\alpha I) v$ and in blue the vectors $(A_2-\check \alpha I) v$, 
			$B_1 v$ and in blue the vectors $B_2 v$,
			for $v \in V_{\dt}$, vertices of $\cQ_{\dt}$.}
      \label{fig2}
\end{figure}

\begin{figure}[ht]
\centering
\global\def\path{#1}\input{fig_anormcnt.inp} \hskip 0.5cm \global\def\path{#1}\input{fig_anormcnt2.inp}\\[2mm]
      \caption{In red the vectors $(A_1-\check\alpha I) v$ and in blue the vectors $(A_2-\check \alpha I) v$,
			for $v \in V_{\dt}$, vertices of $\cQ_{\dt}$.}
      \label{fig2b}
\end{figure}

Applying Algorithm \ref{algoL2} we obtain the optimal shift $\check \gamma = 0.1323026\ldots$ so that we have the estimate
$$
\check\alpha = 1.661007914\ldots \le \check\sigma \le 1.793310513\ldots = \check\beta.
$$
If, however, we take $\dt=1/16$ we obtain a polytope with $28$ vertices which gives the following interval 
of length $\check\gamma = 0.0189\ldots$,
$$
\check\alpha = 1.755426316\ldots \le \check\sigma \le 1.774326316\ldots = \check\alpha.
$$

% This is obtained in a standard Laptop in 14 seconds.

Figure \ref{fig2b} illustrates the fact that the computed polytope $\cQ_\dt$ is positively invariant for the shifted family
$\cA - \alpha I$.

It also shows (right picture) that one of the vectorfields $(\cA - \check\alpha I) v$ is tangential to the boundary of the polytope,
in agreement with the property that $\check\alpha = 1.661007914\ldots$ cannot be increased (or equivalently $\gamma$ cannot be 
decreased).

\section{The phenomenon of fibrillation}

A natural question involves the existence of an optimal piecewise continuous control function
% $u^\star \in \cU$
determining the upper/lower Lyapunov exponent. This is not always true, as we are showing.
We speak of fibrillation whenever as $\dt \rightarrow 0$
% (e.g. $\Delta^{(k)} t = \frac{1}{2^k}$ as $k \rightarrow \infty$)
the spectrum maximizing product, say $\Pi_{\dt}$, has bounded degree
(independent of $\dt$).
This implies that the extremal control function % $u_{\dt}^* \in \mathcal{U}_{\dt}$
oscillates more and more rapidly as $\dt \rightarrow 0$.
% (the frequency is inversely proportional to $\dt$).

\subsection{An illuminating case}

% Consider the case $m=2$ that is $\cA = \{ A_1, A_2 \}$.
We consider families of two matrices.
The following result is important to clarify the phenomenon.

\begin{lemma}
\label{lem:trans} Let ${\cB} = \{ B_1, B_2 \}$ with
$B_1=B_2^{\rm T}$, then it holds
$
\rho({\cB}) = \sqrt{\rho(B_1\,B_2)}
$
\end{lemma}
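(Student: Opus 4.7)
The plan is to prove the equality by separately establishing the upper bound $\rho(\cB) \le \sqrt{\rho(B_1B_2)}$ and the matching lower bound. The key fact that makes this work is that under the transpose relation $B_1 = B_2^{\tp}$, the spectral (operator $2$-)norm of $B_1$ and $B_2$ coincide, and their square equals $\rho(B_1 B_2)$.

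For the upper bound, I would invoke the classical inequality $\rho(\cB) \le \max_i \|B_i\|$ valid for any submultiplicative matrix norm, specialized to the spectral norm $\|\cdot\|_2$. Since transposition preserves the spectral norm, $\|B_1\|_2 = \|B_2^{\tp}\|_2 = \|B_2\|_2$, so
$$
\rho(\cB)\ \le\ \|B_1\|_2\ =\ \sqrt{\rho(B_1^{\tp}B_1)}\ =\ \sqrt{\rho(B_2 B_1)}\ =\ \sqrt{\rho(B_1 B_2)}\, ,
$$
using that $\rho(XY) = \rho(YX)$ for square matrices in the last step.

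For the lower bound, I would use the standard fact that for every single product $\Pi$ of length $n$ of matrices from $\cB$ one has $\rho(\cB) \ge [\rho(\Pi)]^{1/n}$; this follows from applying Definition~\ref{d15} to the sequence of powers $\Pi^k$ and using Gelfand's formula $\|\Pi^k\|^{1/k} \to \rho(\Pi)$. Taking $\Pi = B_1 B_2$ and $n=2$ yields $\rho(\cB) \ge \sqrt{\rho(B_1 B_2)}$, which combined with the upper bound gives equality.

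There is essentially no hard step here: the only substantive input is the identification $\|B_1\|_2^2 = \rho(B_1^{\tp}B_1) = \rho(B_1 B_2)$ made possible by the hypothesis $B_1 = B_2^{\tp}$. Everything else is either a general property of the joint spectral radius or a one-line manipulation. It is worth noting that the argument actually shows more, namely that the length-$2$ product $B_1 B_2$ is a spectrum maximizing product and that the spectral norm itself is an extremal norm for the family $\cB$.
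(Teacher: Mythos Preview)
Your proof is correct and follows essentially the same approach as the paper: the paper also sandwiches $\rho(\cB)$ between $\sqrt{\rho(B_1B_2)}$ (the product lower bound) and $\|\cB\|_2 = \max\{\|B_1\|_2,\|B_2\|_2\}$ (the norm upper bound), and then closes the gap via $\rho(B_1B_2) = \rho(B_1^{\tp}B_1) = \|B_1\|_2^2 = \|B_2\|_2^2$. Your write-up is simply a more explicit version of the same argument.
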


\begin{proof}
By well-known inequalities \cite{DL} % (\ref{est.jsr})
we have
\begin{eqnarray*}
&& \sqrt{\rho\left( B_1\,B_2 \right)} \,\le\, \rho({\cB}) \,\le\,
\| \cB \|_2 \,=\, \max\{ \| B_1 \|_2, \| B_2 \|_2 \}.
\end{eqnarray*}
Using the assumption, the result follows from the equality
$\rho\left( B_1\,B_2 \right) = \rho\left( B_1^{\rm T}\,B_1 \right) = \| B_1 \|_2^2 = \| B_2 \|_2^2.$
\qed
\end{proof}

\begin{corollary}
Every $d \times d$ family of matrices $\cA = \{ A_1, A_2 \}$ with $A_2 = A_1^{\rm T}$
shows the phenomenon of fibrillation.
\end{corollary}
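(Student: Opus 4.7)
The plan is to apply Lemma \ref{lem:trans} directly to the discretized family, exhibiting a spectrum maximizing product of length $2$ that works uniformly in the dwell time $\tau$.

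First, I will set up the discretization. For any dwell time $\tau>0$, define $\cB_\tau = e^{\,\tau\cA} = \{B_1,B_2\}$ with $B_1 = e^{\,\tau A_1}$ and $B_2 = e^{\,\tau A_2}$. Since matrix transposition is a continuous linear involution, it commutes with the exponential power series, so
\[
B_2 \ = \ e^{\,\tau A_1^{\tp}} \ = \ \bigl(e^{\,\tau A_1}\bigr)^{\tp} \ = \ B_1^{\tp}\, .
\]
Thus $\cB_\tau$ satisfies the hypothesis of Lemma~\ref{lem:trans} for every $\tau>0$.

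Next, I apply the lemma to obtain $\rho(\cB_\tau) = \sqrt{\rho(B_1 B_2)}$. Setting $\Pi_\tau := B_1 B_2$, a product of length $n(\Pi_\tau) = 2$, this equality reads $[\rho(\Pi_\tau)]^{1/n(\Pi_\tau)} = \rho(\cB_\tau)$. By the definition of spectrum maximizing product recalled after Definition~\ref{d16}, $\Pi_\tau$ is an s.m.p.\ of the family $\cB_\tau = e^{\,\tau\cA}$.

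Finally, I conclude the fibrillation phenomenon: the map $\tau \mapsto \Pi_\tau$ produces, for every $\tau>0$, an s.m.p.\ of length exactly $2$, so the degree of an s.m.p.\ of $e^{\,\tau\cA}$ stays bounded (by $2$) independently of $\tau$, which is exactly the definition of fibrillation given at the beginning of this section. I do not expect any real obstacle here: the entire argument reduces to the identity $e^{\,\tau A^{\tp}} = (e^{\,\tau A})^{\tp}$ together with the already proved Lemma~\ref{lem:trans}, and no limiting argument in $\tau$ is needed since the length-$2$ s.m.p.\ is uniform.
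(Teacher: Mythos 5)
Your proposal is correct and follows exactly the paper's own argument: observe that $e^{\,\tau A_1^{\tp}} = (e^{\,\tau A_1})^{\tp}$, so Lemma~\ref{lem:trans} applies to $e^{\,\tau\cA}$ for every $\tau$, yielding a length-$2$ s.m.p.\ uniformly in $\tau$. You simply spell out the details that the paper's one-line proof leaves implicit.
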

\begin{proof}
It is sufficient to observe that the family $\{ e^{\dt A_1}, e^{\dt A_2} \}$ fulfils assumptions of Lemma 
\ref{lem:trans} so that the s.m.p. has length $2$ independently of $\dt$. \qed
\end{proof}

\subsubsection*{Illustrative example.}

We consider (\ref{main}) with $d=2$, $m=2$ and
\begin{eqnarray}
&& A_1 = \left( \begin{array}{rr} 0 & 1 \\ 0 & 0 \end{array} \right), \qquad
A_2 = \left( \begin{array}{rr} 0 & 0 \\ 1 & 0 \end{array} \right),
\nonumber
\end{eqnarray}
with the aim to approximate $\sigma (\cA)$.
If we consider any $\dt$ we get for the family
\begin{eqnarray}
&& \cB_{\dt} = \{ B_{1,\dt}, B_{2,\dt} \} := \{ \e^{A_1 \dt}, \e^{A_2 \dt} \},
\qquad
B_{1,\dt} = \left( \begin{array}{rr} 1 & \dt \\ 0 & 1 \end{array} \right), \qquad
B_{2,\dt} = \left( \begin{array}{rr} 1 & 0 \\ \dt & 1 \end{array} \right),
\nonumber
\end{eqnarray}
that - due to the fact that $B_{2,\dt} = B_{1,\dt}^{\rm T}$ -
$$
\rho(\cB_{\dt}) = \sqrt{\rho(B_{1,\dt} B_{2,\dt})} =
\frac{\sqrt{\dt ^2+\sqrt{\dt^2+4} \dt +2}}{\sqrt{2}}.
$$
It follows that
$$
\frac{1}{\dt} \log \left( \rho(\cB_{\dt}) \right) =
\frac{1}{2} - \frac{1}{48} \dt^2 + \cO(\dt^4)
\nearrow \frac{1}{2} \ \mbox{as} \ \dt \rightarrow 0^+,
$$
yielding ${\sigma}(\cA) = \frac{1}{2}$.
To interpret this result we make use of the following Lemma, which follows frome the well-known 
Lie Trotter product formula.
\begin{lemma}
Let $A(\theta) = \theta A_1 + (1-\theta) A_2$. Then 
$$
\lim\limits_{k \rightarrow \infty} \left( \e^{\frac{\theta}{k} A_1}\,\e^{\frac{1-\theta}{k} A_2}
\right)^{k} =  \e^{A(\theta)}.
$$
\end{lemma}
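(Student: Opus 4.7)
The plan is to recognize this as a direct application of the Lie--Trotter product formula for the two fixed matrices $X = \theta A_1$ and $Y = (1-\theta) A_2$, for which $X + Y = A(\theta)$. Introduce the two sequences
$$M_k \ = \ \e^{\frac{1}{k}X}\,\e^{\frac{1}{k}Y}, \qquad N_k \ = \ \e^{\frac{1}{k}(X+Y)},$$
so that $N_k^{\,k} = \e^{A(\theta)}$ identically, and the lemma reduces to showing $M_k^{\,k} \to N_k^{\,k}$ as $k \to \infty$.

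First I would Taylor--expand each factor, exactly as in estimate~(\ref{const1}) of Section~2. This gives $\e^{X/k} = I + X/k + R_X/k^2$ and $\e^{Y/k} = I + Y/k + R_Y/k^2$, with $\|R_X\|$ and $\|R_Y\|$ bounded uniformly in~$k$ (they depend only on the fixed matrices $X,Y$). Multiplying the two expansions and collecting terms of order $1/k^2$ yields
$$M_k \ = \ I \ + \ \frac{X+Y}{k} \ + \ \frac{S_k}{k^2}$$
with $\|S_k\|$ bounded in $k$. A one-line expansion gives the analogous formula for $N_k$, and subtracting the two expansions shows $\|M_k - N_k\| \le C/k^2$ for a constant $C$ depending only on $A_1, A_2, \theta$.

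The final step is to compare $k$-th powers. I would use the telescoping identity
$$M_k^{\,k} \ - \ N_k^{\,k} \ = \ \sum_{j=0}^{k-1} M_k^{\,j}\,(M_k - N_k)\,N_k^{\,k-1-j}.$$
Since $\|M_k\|, \|N_k\| \le 1 + D/k$ for some constant $D$, the submultiplicativity of the norm yields $\|M_k^{\,j}\|, \|N_k^{\,k-1-j}\| \le (1+D/k)^{k} \le \e^{D}$ uniformly for $0 \le j \le k-1$. Inserting the bound $\|M_k - N_k\| \le C/k^2$ into the telescoping sum gives
$$\|M_k^{\,k} - \e^{A(\theta)}\| \ \le \ k \cdot \e^{2D} \cdot \frac{C}{k^2} \ = \ \bigo(1/k) \ \to \ 0,$$
which is precisely the statement of the lemma. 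The only real technical point is making the quadratic remainder uniform in $k$, but this is immediate because $X$ and $Y$ are fixed and the Taylor series for the matrix exponential converges with explicit bounds; no deep fact beyond~(\ref{const1}) is needed.
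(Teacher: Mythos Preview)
Your proof is correct and complete; this is exactly the standard Taylor-expansion-plus-telescoping argument for the Lie--Trotter product formula. The paper itself does not actually prove the lemma: it merely states that the result ``follows from the well-known Lie Trotter product formula'' and moves on, so you have in fact supplied more detail than the paper does.
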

Choosing $\dt = \displaystyle{ \frac{1}{2 k} }$ we have from the lemma,
$$
\displaystyle{
( B_{1,\dt} B_{2,\dt} )^k = ( B_{1,\dt} B_{2,\dt} )^\frac{1}{2 \dt}
\approx \e^{\frac{A_1 + A_2}{2}}}.
$$
%which means that
%$$
%x(1) = ( B_{1,\dt} B_{2,\dt} )^n x(0) \approx \displaystyle{{\rm e}^{\frac{A_1 + A_2}{2}}} x(0)
%$$
which gives
% This is equivalent to
$
% \hat
{\sigma} (\cA)
% = \lim\limits_{\dt \rightarrow 0^+} \frac{1}{\dt} \log\left( \rho(\cB_{\dt}) \right)
= \sigma\left( A(1/2) \right) = \frac{1}{2},
$
where $\sigma(C)$ denotes the spectral abscissa of a matrix $C$,
that is the largest real part of eigenvalues of $C$.
We can interpret fibrillation as the fact that at every instant the maximal growth
would be obtained by taking both matrices, that is a multivalued control function,
which in turn is equivalent to consider a convex combination of the vector fields.
%

%\subsection{A necessary condition to fibrillation}
%
%A necessary condition to fibrillation for $m=2$ is the following:
%
%\begin{prop}
%Fibrillation implies that the function $\sigma\left( A(\theta) \right)$ has a maximum in $(0,1)$.
%such that the maximizer $\theta_{\star}$ is rational.
%\end{prop}
%
%For previous Example \ref{ex:4} we have indeed
%$$
%\sigma \Big( A(\theta) \Big) = \sqrt{\theta -\theta^2}
%$$
%which is maximal at $\theta = \frac{1}{2}$, with $\sigma \Big( A(\frac{1}{2}) \Big) = \frac{1}{2}$.

A natural open issue concerns the search of conditions which determine fibrillation and
understanding its possible non genericity.
\begin{remark}
Finally we observe that fibrillation cannot occur if we consider generalized trajectories
(obtained replacing $\cA$ by ${\rm co}(\cA)$ (convex hull of $\cA$)) as it follows
from  Theorem~A of N.Barabanov.
In the given example in fact, the critical control function would be constant with
$A(u(t)) = \ds{\frac{A_1+A_2}{2}} \ \mbox{for all} \ t.$
\end{remark}

\section{Illustrative cases and numerical examples.}

\subsection{Illustrative test problems}

%We summarize here the numerical results obtained on a set of test problems.
%We remark that we are able to deal with problems with dimension equal to some tens.
%The computational complexity is that of repeatadly solving a sequence of LP problems.

We provide some illustrative examples and compare the obtained results
by those achieved by looking for a common quadratic Lyapunov function (CQLF). We have made use
of the Yalmip Matlab package to compute an optimal CQLF. 

\subsubsection{Example 1: (dimension $3$).}
\label{ex:1}

We consider the following example proposed by Jungers and Protasov \cite{PJ}.
Let ${\cA}=\{ A_1, A_2 \}$ with
\begin{eqnarray} 
A_1 & = & \left(\begin{array}{rrr}
   -0.0822  &  0.0349 &  -0.1182 \\
    0.0953  & -0.0897 &  -0.1719 \\
    0.0787  &  0.0223 &  -0.2781
\end{array}\right), \qquad
% \nonumber
% \\
A_{2} \, = \, \left( \begin{array}{rrr} 
    0.1391  &  0.1397 &  -0.0916 \\
    0.0338  & -0.1769 &  -0.0707 \\
    0.7417  &  0.3028 &  -0.4621
\end{array}\right).
\nonumber
\end{eqnarray}

%\begin{figure}
%\hskip -0.3cm \centerline{\includegraphics[height=7.3cm]{fig1.ps}} 
%\caption{Polytope unit ball of the $\eps$-extremal norm obtained for
%the family $\{ e^{\dt A_1}, e^{\dt A_2} \}$ of Example \ref{ex:1} 
%for $\dt = 1/2$ and $\eps=0.025$.} 
%\label{fig:ex1}
%\end{figure}

Looking for a starting product $P$ of length $\ell \le 100$ we obtain the results shown in Table \ref{tab:1}.
The first columns reports $\dt$, the second and third columns denote the computed lower and upper bounds, the fourth columns the amplitude $\gamma$ of the interval containing the exact value, the fifth column provides the starting product, the sixth column the used $\eps$-value and the
seventh column the number of vertices of the computed $\eps$-extremal polytope.
\begin{table}[h]
\caption{Approximation of the Lyapunov exponent}
\label{tab:1}
\begin{center}
\begin{tabular}{|c||c||c||c||c|c|c|}
\hline
$\dt$ & $\beta$ & $\alpha$ & $\gamma$ & $\Pi$ & $\eps$ & $\# V$ \\[0.07cm]
\hline
$1/2$  & $-0.0470$ & $0.0074$ & $0.0545$ & $B_1^{27} B_2^{29}$  & $0.05$ & $163$ \\[0.07cm]
\hline
$1/2$  & $-0.0470$ & $-0.0148$ & $0.0322$ & $B_1^{27} B_2^{29}$ & $0.025$ & $332$ \\[0.07cm]
\hline
$1/4$  & $-0.0410$ & $0.0089$ & $0.0550$ & $B_1^{55} B_2^{58}$   & $0.0125$ & $423$ \\[0.07cm]
\hline
$1/4$  & $-0.0410$ & $-0.0243$ & $0.0227$ & $B_1^{55} B_2^{58}$    & $0.005$  & $1655$ \\[0.07cm]
%\hline
%$1/4$  & $-0.0410$ & $1.1302$ & $0.173$ & $B_1^{55} B_2^{58}$    & $0.0125$ & $83$ \\[0.07cm]
\hline
%CQLF & $0.9574$ & $1.2894$ & $0.332$ & --- & --- & --- \\[0.01cm]
%\hline
\end{tabular}
\end{center}
\end{table}

% Figure \ref{fig:ex1} shows the computed polytope with $322$ vertices corresponding to the second line
% of the table.
Applying the algorithm for the search of a CQLF, we obtain $\alpha=7 \cdot 10^{-5} > 0$
(which does not guarantee uniform stability); referring to the value $\beta=-0.0410$, this would correspond 
to the value $\gamma=0.0480$.

Example \ref{ex:1} puts in evidence that through the CQLF approach it is not possible to decide stability of
the system since the lower bound for the Lyapunov exponent is negative while the upper bound is positive.
This is due to the fact the extremal norm for $\cA$ is (in general) non quadratic.

\subsubsection{Example 2: (dimension $5$).}
\label{ex:2}

Let ${\cA}=\{ A_1, A_2 \}$ with
\begin{eqnarray} 
A_1 & = & \left(\begin{array}{rrrrr}
 -0.9 & -1.0 & -1.0 & -1.0 &    0 \\
    0 & -0.9 & -1.0 & -1.0 &    0 \\
 -1.0 & -1.0 & -0.9 &    0 &    0 \\
    0 &    0 & -1.0 & -1.9 & -1.0 \\
    0 &    0 & -1.0 &    0 & -1.9
\end{array}\right), \qquad
% \nonumber
% \\
A_{2} \, = \, \left( \begin{array}{rrrrr} 
 -0.9 & -1.0 &    0 &    0 &    0 \\
    0 & -1.9 & -1.0 & -1.0 & -1.0 \\
    0 &    0 & -0.9 &    0 &    0 \\
 -1.0 &    0 &    0 & -1.9 &    0 \\
    0 & -1.0 &    0 & -1.0 & -0.9
\end{array}\right).
\nonumber
\end{eqnarray}

Applying the algorithm for the search of a CQLF we do not find a positive semidefinite 
matrix $M$ such that 
\[
A_i^\tp M + M A_i \preceq 0
\]
which means we cannot state the uniform stability of the associated switched system by
means of an ellipsoid norm. 
% However a CQLF exists for $A_i - \alpha I$, with $\gamma=0.05$,
% which gives a good upper bound for the Lyapunov exponent.

Nevertheless we can prove stability by means of Algorithm (R), but this
is obtained only for a small $\dt$ and at a high computational cost (the overall procedure
employed several hours of computation). 

\begin{table}[h]
\caption{Approximation of the Lyapunov exponent}
\label{tab:2}
\begin{center}
\begin{tabular}{|c||c||c||c||c|c|c|}
\hline
$\dt$ & $\beta$ & $\alpha$ & ${\gamma}$ & $\Pi$ & $\eps$ & $\# V$ \\[0.07cm]
\hline 
$1/10$  & $-0.1372$ & $0.3927$  & $0.530$ & $B_1^{90} B_2^{39}$      & $0.02$   & $772$ \\[0.07cm]
\hline
$1/20$  & $-0.1372$ & $0.1927$  & $0.320$ & $B_1^{180} B_2^{78}$     & $0.01$   & $2666$ \\[0.07cm]
% \hline
% $1/20$  & $-0.1372$ & $0.3027$  & $0.430$ & $B_1^{180} B_2^{78}$     & $0.01$   & $9144$ \\[0.07cm]
\hline
$1/40$  & $-0.1372$ & $0.1030$  & $0.243$ & $B_1^{359} B_2^{156}$    & $0.005$  & $9351$ \\[0.07cm]
%\hline
%$1/60$  & $-0.1372$ & $0.1030$  & $0.243$ & $B_1^{539} B_2^{234}$    & $0.005$  & $9351$ \\[0.07cm]
%\hline
%$1/50$  & $-0.1372$ & $0.1727$  & $0.315$ & $B_1^{449} B_2^{195}$    & $0.005$  & $8390$ \\[0.07cm]
\hline
$1/100$ & $-0.1372$ & $-0.0422$ & $0.095$ & $B_1^{898} B_2^{390}$    & $0.0025$ & $23885$ \\[0.07cm]
\hline
\end{tabular}
\end{center}
\end{table}

Example \ref{ex:2} emphasizes that in order to achieve a high accuracy in the approximation of the Lyapunov
exponent it is necessary to accept a significant computational cost. However, since the CQLF method does not 
provide a negative upper bound for the Lyapunov exponent such a computational effort is necessary to obtain 
the stability result.

\subsubsection{Example 3: positive system (dimension $3$).}
\label{ex:3}

We consider the following well-known example proposed by Margallot et al \cite{FMC},
which for convenience we shift by the identity.
Let ${\cA}=\{ A_1, A_2 \}$ with
\begin{eqnarray} 
A_1 & = & \left(\begin{array}{rrr}
-2  &  0  &   0 \\
10  & -2  &   0 \\
 0  &  0  & -11
\end{array}\right), \qquad
% \nonumber
% \\
A_{2} \, = \, \left( \begin{array}{rrr} 
-11  &   0 & 10 \\
  0  & -11 &  0 \\
  0  &  10 & -2
\end{array}\right).
\nonumber
\end{eqnarray}

In Table \ref{tab:3} we report the results obtained by applying Algorithms (P), part 1 and 2
(note that in all cases the s.m.p. is found to be of the form $B_1^k B_2^{n-k}$).
% $\hat B_1^k \hat B_2^{n-k}$ where $n$ is its degree and $\hat B_1 = \e^{\dt A_1}$ and  $\hat B_{2} = \e^{\dt A_2}$).

%\begin{table}[h]
%\caption{Approximation of the upper Lyapunov exponent}
%\label{tab:3}
%\begin{center}
%\begin{tabular}{|c||c||c||c||c|c|c|}
%\hline
%$\dt$ & $\beta$ & $\alpha$ & ${\gamma}$ & $\Pi$ & $\eps$ & $\# V$ \\[0.07cm]
%\hline
%$1/16$  & $0.9538$ & $1.7168$ & $0.763$ & $B_1^8 B_2^5$         & $0$ & $13$ \\[0.07cm]
%\hline
%$1/32$  & $0.9558$ & $1.2548$ & $0.299$ & $B_1^{16} B_2^9$      & $0$ & $34$ \\[0.07cm]
%\hline
%$1/64$  & $0.9572$ & $1.1302$ & $0.173$ & $B_1^{31} B_2^{19}$   & $0$ & $83$ \\[0.07cm]
%\hline
%$1/128$ & $0.9573$ & $1.0425$ & $0.085$ & $B_1^{62} B_2^{37}$   & $0$ & $165$ \\[0.07cm]
%\hline
%$1/256$ & $0.9574$ & $0.9994$ & $0.042$ & $B_1^{125} B_2^{75}$  & $0$ & $587$ \\[0.07cm]
%\hline
%$1/512$ & $0.9574$ & $0.9825$ & $0.025$ & $B_1^{249} B_2^{149}$ & $0$ & $2228$ \\[0.12cm]
%%\hline \hline
%%CQLF & $0.9574$ & $1.2894$ & $0.332$ & --- & --- & --- \\[0.01cm]
%\hline
%\end{tabular}
%\end{center}
%\end{table}

\begin{table}[h]
\caption{Approximation of the Lyapunov exponent}
\label{tab:3}
\begin{center}
\begin{tabular}{|c||c||c||c||c|c|c|}
\hline
$\dt$ & $\beta$ & $\alpha$ & ${\gamma}$ & $\Pi$ & $\eps$ & $\# V$ \\[0.07cm]
\hline
$1/16$  & $-0.0462$ & $0.7168$ & $0.763$ & $B_1^8 B_2^5$         & $0$ & $13$ \\[0.07cm]
\hline
$1/32$  & $-0.0442$ & $0.2548$ & $0.299$ & $B_1^{16} B_2^9$      & $0$ & $34$ \\[0.07cm]
\hline
$1/64$  & $-0.0428$ & $0.1302$ & $0.173$ & $B_1^{31} B_2^{19}$   & $0$ & $83$ \\[0.07cm]
\hline
$1/128$ & $-0.0427$ & $0.0425$ & $0.085$ & $B_1^{62} B_2^{37}$   & $0$ & $165$ \\[0.07cm]
\hline
$1/256$ & $-0.0426$ & $-0.0006$ & $0.042$ & $B_1^{125} B_2^{75}$  & $0$ & $587$ \\[0.07cm]
\hline
$1/512$ & $-0.0426$ & $-0.0175$ & $0.025$ & $B_1^{249} B_2^{149}$ & $0$ & $2228$ \\[0.12cm]
%\hline \hline
%CQLF & $0.9574$ & $1.2894$ & $0.332$ & --- & --- & --- \\[0.01cm]
\hline
\end{tabular}
\end{center}
\end{table}

Table \ref{tab:3} shows that the system $\cA$ is stable (this is seen already for
$\dt=1/256$).

Applying the algorithm for the search of a CQLF, we obtain referring to $\beta=-0.0426$ a value
$\alpha=0.2894$ which implies $\gamma=0.332$. So, already for $\dt=1/32$, the polytope with $34$
vertices constructed by Algorithm (P) gives a better estimate than CQLF.
The value $\gamma$ associated to the CQLF method is quite large here and the polytope method
outperforms the quadratic one since it allows to assess stability of the system computing a polytope
with a moderate number of vertices and thus quite efficiently.

%\bigskip
%\textbf{Example 8 (lower Lyapunov exponent, dimension $10$)}.
%
%\bigskip
%\textbf{Example 8 (lower Lyapunov exponent, dimension $10$)}.
%
%We consider again the set of matrices considered in Example 6, this time with the aim of approximating
%the lower Lyapunov exponent.

% For $\dt = 1/2$

\subsection{Example 4: positive system (dimension $3$).}
\label{ex:4}

This example is inspired by \cite{AR}. Let ${\cA}=\{ A_1, A_2 \}$ with
\begin{eqnarray}
A_1 & = & \left(\begin{array}{rrr}
-1 & 1/10 & 1/10 \\
1/10 & -1 & 1/10 \\
1/6 & 1/6 & -1/3
\end{array}\right), \qquad
% \nonumber
% \\
A_{2} \, = \, \left( \begin{array}{rrr}
-1/2 & 1/10 & 9/8 \\
1/6 & -1/3 & 7/8 \\
1/10 & 1/10 & -1
\end{array}\right).
\nonumber
\end{eqnarray}
%Il Teorema 4, il Teorema 6 ed il Teorema 7, relativi all'articolo di Hugo Alonso and Paula Rocha, \textit{"A General Stability Test for Switched Positive Systems Based on a Multidimensional System Analysis"}, non sono soddisfatti.
In Table \ref{tab:4} we report the results obtained by applying Algorithms (P), part 1 and part 2
(note that in all cases the s.m.p. is found to be $B_{2}$. % = \e^{\dt A_2}$.

\subsubsection*{Lyapunov exponent.}

Table \ref{tab:4} shows that the system is stable (this is seen already for
$\dt=1/2$).

\begin{table}[h]
\caption{Approximation of the Lyapunov exponent}
\label{tab:4}
\begin{center}
\begin{tabular}{|c||c||c||c||c|c|c|}
\hline
$\dt$  & $\beta$     & $\alpha$  & ${\gamma}$ & $\Pi$ & $\eps$ & $\# V$ \\[0.07cm]
\hline
   $1$ & $-0.061107$ & $0.07500$   & $0.136$  & $B_2$ & $0$ & $3$  \\[0.07cm]
\hline
 $1/2$ & $-0.061107$ & $-0.003891$ & $0.0571$ & $B_2$ & $0$ & $4$  \\[0.07cm]
\hline
 $1/8$ & $-0.061107$ & $-0.047604$ & $0.0134$ & $B_2$ & $0$ & $13$ \\[0.07cm]
\hline
$1/16$ & $-0.061110$ & $-0.054375$ & $0.0067$ & $B_2$ & $0$ & $26$ \\[0.07cm]
\hline
$1/32$ & $-0.061107$ & $-0.057489$ & $0.0033$ & $B_2$ & $0$ & $50$ \\[0.07cm]
\hline
$1/64$ & $-0.061107$ & $-0.058563$ & $0.0025$ & $B_2$ & $0$ & $100$ \\[0.07cm]
\hline 
\end{tabular}
\end{center}
\end{table}

It is interesting to observe that several stability criteria, based on suitable sufficient conditions, are shown in \cite{AR} not to be effective for this problem.
In fact Theorem 4, Theorem 6 and Theorem 7 in \cite{AR} do not apply so that uniform stability cannot be inferred. Nevertheless, the results of our algorithm, reported in Table  \ref{tab:1} show that
the associated system of ODEs is uniformly asymptotically stable.

We also notice that in this case the CQLF method, referring to $\beta = -0.061107$, provides a value $\gamma$ of the same order
of that obtained with $\tau=1/64$.

\subsubsection*{Lower Lyapunov exponent}

Table \ref{tab:4l} provides the results obtained by applying Algorithms % (L) 
\ref{algoL} and \ref{algoL2}. 

\begin{table}[h]
\caption{Approximation of the lower Lyapunov exponent}
\label{tab:4l}
\begin{center}
\begin{tabular}{|c||c||c||c||c|c|c|}
\hline
$\dt$  & $\check\beta$     & $\check\alpha$  & ${\check\gamma}$ & $\Pi$ & $\eps$ & $\# V$ \\[0.07cm]
\hline
 $1/4$ & $-0.29023$ & $-0.33453$ & $0.0443$ & $B_1^5 B_2$ & $0.01$ & $24$  \\[0.07cm]
\hline
 $1/8$ & $-0.29073$ & $-0.30843$ & $0.0177$ & $B_1^5 B_2$ & $0.001$ & $37$ \\[0.07cm]
\hline
$1/16$ & $-0.29086$ & $-0.30076$ & $0.0099$ & $B_1^5 B_2$ & $0.0003$ & $134$ \\[0.07cm]
\hline
$1/32$ & $-0.29087$ & $-0.29869$ & $0.0079$ & $B_1^5 B_2B_2$ & $0.0003$ & $252$ \\[0.07cm]
\hline
\end{tabular}
\end{center}
\end{table}

The Lower Lyapunov exponent is estimated from below by $\check\alpha = -0.29869$.

\subsubsection{Example 5: positive system (dimension $8$).}
\label{ex:5}

Consider the randomly generated family $\cA = \{ A_1, A_2 \}$ with

% \small
\begin{eqnarray*}
&& \hskip -0.3cm A_1 = {% \scriptsize
\left(\begin{array}{rrrrrrrr}
 -15 &  1 &  1 &  0 &  3 &  2 &  0 &  0 \\
   2 & -9 &  3 &  2 &  3 &  1 &  2 &  1 \\
   1 &  3 &-13 &  2 &  1 &  1 &  0 &  3 \\
   2 &  0 &  1 & -7 &  1 &  0 &  0 &  1 \\
   1 &  0 &  1 &  1 & -8 &  0 &  1 &  0 \\
   1 &  3 &  1 &  2 &  3 &-11 &  2 &  2 \\
   1 &  3 &  1 &  3 &  1 &  1 &-10 &  1 \\
   2 &  1 &  3 &  2 &  3 &  2 &  3 &-11
\end{array}\right)}
\\[0.2cm]
&& \hskip -0.3cm A_2 = {% \scriptsize
\left(\begin{array}{rrrrrrrr}
 -10 &  2 &  2 &  0 &  1 &  3 &  2 &  0 \\
   0 &-16 &  2 &  1 &  2 &  3 &  1 &  2 \\
   2 &  2 &-14 &  3 &  1 &  2 &  3 &  1 \\
   0 &  3 &  3 &-13 &  3 &  2 &  0 &  0 \\
   3 &  2 &  1 &  2 & -9 &  0 &  1 &  3 \\
   1 &  3 &  0 &  0 &  1 & -7 &  0 &  0 \\
   0 &  2 &  3 &  2 &  2 &  3 &-17 &  2 \\
   2 &  2 &  2 &  2 &  2 &  3 &  2 &-17
\end{array}\right) }
\end{eqnarray*}
\normalsize
% \Large
with spectral absciss\ae \
$\sigma(A_1) = -0.89470735\ldots$, $\sigma(A_2) = -1.22136422\ldots$.

% Our aim is determining whether the switched system (\ref{main}) is % uniformly
% stable.

\subsubsection*{Lyapunov exponent.}

Table \ref{tab:5} reports the obtained computational results.

\begin{table}[h]
\caption{Approximation of the Lyapunov exponent}
\label{tab:5}
\begin{center}
\begin{tabular}{|c||c||c||c||c|c|c|}
\hline
$\dt$  & $\beta$     & $\alpha$  & ${\gamma}$ & $\Pi$ & $\eps$ & $\# V$ \\[0.07cm]
\hline
$1/32$ & $-0.76212368$ & $-0.33813367$ & $0.4239900$
& $B_1^4 B_2^2$ & $0.001$  & $46$ \\[0.07cm]
\hline
$1/64$ & $-0.76207385$ & $-0.56012765$ & $0.2019462$
& $B_1^6 B_2^3$ & $0.001$ & $194$ \\[0.07cm]
\hline
$1/128$ & $-0.76207385$ & $-0.56776133$ & $0.1943125$
& $B_1^8 B_2^4$ & $0.001$ & $256$ \\[0.07cm]
\hline
\end{tabular}
\end{center}
\end{table}
Note that in the last three cases $\eps$-extremal polytopes have been computed (with $\eps=0.001$).

% according to (\ref{eq:epsextr}).
For the case $\dt = 1/32$ we also compute the optimal value $\alpha$ by using the
standard tetrahedron defining the unit ball of the one-norm and compare it to the polytope
${\cP}_{\dt}$ obtained applying Algorithm (P). 
In this case we get $\alpha^* = 11.171105\ldots$ (which would not allow to infer stability of the system) to
be compared to the much smaller value $\alpha_{\dt} = 0.42399\ldots$ computed by Algorithm \ref{algoP2}.

We can conclude asserting that the system is (uniformly)
stable and that the (upper) Lyapunov exponent is smaller than
$\gamma_{1/32} = -0.56012765\ldots.$

Applying the algorithm for the search of a CQLF, we obtain referring to $\beta=-0.7620$ a value
$\alpha=-0.7590$ which implies $\gamma=0.003$, that is an excellent value, outperforming 
the one obtained by our algorithm subject to the choice of parameters in Table \ref{tab:5}.

\subsubsection*{Lower Lyapunov exponent}

We also compute bounds for the lower Lyapunov exponent by computing a polytope
antinorm with Algorithm (L).

Table \ref{tab:5l} provides the results obtained by applying Algorithms % (L) 
\ref{algoL} and \ref{algoL2} applied to Example \ref{ex:5}.

The accuracy of the approximations of the classical and the lower Lyapunov exponents
appear to be comparable. The number of vertices is smaller for the lower Lyapunov
exponent also because of the shorter length of the s.l.p. with respect to the s.m.p.
for the considered values of $\dt$.

We may conjecture that the lower Lyapunov exponent of the system is the spectral abscissa
$\sigma(A_2)$.

\begin{table}[h]
\caption{Approximation of the lower Lyapunov exponent}
\label{tab:5l}
\begin{center}
\begin{tabular}{|c||c||c||c||c|c|c|}
\hline
$\dt$  & $\check\beta$     & $\check\alpha$  & ${\check\gamma}$ & $\Pi$ & $\eps$ & $\# V$ \\[0.07cm]
\hline
 $1/32$  & $-1.22136$ & $-1.68279$ & $0.4614$ & $B_2$  & $0.001$   & $16$ \\[0.07cm]
\hline
 $1/64$  & $-1.22136$ & $-1.58316$ & $0.3618$ & $B_2$  & $0.0001$  & $26$ \\[0.07cm]
\hline
 $1/128$ & $-1.22136$ & $-1.38686$ & $0.1655$ & $B_2$ & $0.00001$ & $73$ \\[0.07cm]
\hline
\end{tabular}
\end{center}
\end{table}

\subsubsection{Example 6: positive system (dimension $25$).}
\label{ex:6}

We consider the randomly generated family of sign matrices (where each diagonal element is chosen
uniformly and independently from the set $\{ -1,0,1 \}$ and each off-diagonal element is chosen from
the set $\{ 0,1 \}$), $\cA = \{ A_1, A_2 \}$ with
\begin{eqnarray*}
&& \hskip -0.3cm A_1 = { \scriptsize
\left(\begin{array}{rrrrrrrrrrrrrrrrrrrrrrrrr}
 0 & 1 & 0 & 0 & 1 & 0 & 1 & 1 & 0 & 1 & 1 & 1 & 0 & 0 & 1 & 0 & 0 & 1 & 1 & 1 & 0 & 1 & 1 & 1 & 0 \\ 
 0 &-1 & 0 & 0 & 0 & 1 & 0 & 1 & 1 & 1 & 0 & 0 & 0 & 0 & 0 & 0 & 0 & 0 & 1 & 0 & 1 & 0 & 0 & 1 & 1 \\
 1 & 1 &-1 & 0 & 1 & 0 & 1 & 0 & 0 & 0 & 1 & 0 & 0 & 1 & 0 & 0 & 0 & 1 & 0 & 0 & 1 & 1 & 0 & 1 & 1 \\
 1 & 0 & 1 & 0 & 1 & 0 & 0 & 0 & 1 & 0 & 0 & 0 & 1 & 0 & 0 & 1 & 1 & 0 & 0 & 1 & 0 & 1 & 1 & 0 & 0 \\
 1 & 0 & 0 & 0 & 0 & 1 & 0 & 0 & 1 & 0 & 1 & 0 & 0 & 1 & 1 & 0 & 0 & 0 & 0 & 1 & 1 & 0 & 1 & 0 & 1 \\
 1 & 1 & 1 & 0 & 1 & 1 & 1 & 0 & 0 & 0 & 0 & 0 & 1 & 0 & 1 & 0 & 0 & 0 & 1 & 1 & 0 & 1 & 1 & 1 & 1 \\
 1 & 0 & 0 & 1 & 0 & 1 & 1 & 1 & 1 & 1 & 0 & 1 & 1 & 1 & 1 & 1 & 0 & 1 & 0 & 0 & 0 & 1 & 0 & 0 & 0 \\
 0 & 0 & 0 & 0 & 1 & 0 & 0 &-1 & 0 & 1 & 0 & 0 & 0 & 0 & 0 & 0 & 1 & 0 & 1 & 0 & 1 & 0 & 1 & 0 & 1 \\
 0 & 0 & 1 & 0 & 0 & 1 & 0 & 0 & 0 & 1 & 0 & 0 & 0 & 1 & 0 & 1 & 1 & 1 & 1 & 1 & 1 & 0 & 0 & 1 & 1 \\
 1 & 0 & 0 & 1 & 0 & 1 & 1 & 1 & 0 & 1 & 1 & 1 & 1 & 0 & 0 & 0 & 0 & 1 & 0 & 1 & 1 & 0 & 1 & 0 & 1 \\
 1 & 1 & 0 & 1 & 0 & 1 & 0 & 1 & 0 & 1 &-1 & 1 & 0 & 0 & 1 & 0 & 1 & 0 & 0 & 0 & 0 & 0 & 0 & 1 & 1 \\
 0 & 1 & 0 & 0 & 1 & 0 & 1 & 0 & 1 & 0 & 1 & 0 & 1 & 0 & 1 & 0 & 0 & 0 & 0 & 1 & 1 & 0 & 0 & 0 & 1 \\
 1 & 0 & 1 & 0 & 0 & 0 & 0 & 1 & 1 & 0 & 1 & 1 & 1 & 1 & 1 & 0 & 0 & 1 & 0 & 1 & 1 & 0 & 1 & 0 & 0 \\
 1 & 1 & 1 & 1 & 1 & 0 & 0 & 1 & 0 & 1 & 0 & 0 & 0 & 1 & 1 & 0 & 1 & 1 & 0 & 1 & 1 & 1 & 0 & 0 & 0 \\
 1 & 1 & 1 & 1 & 0 & 0 & 0 & 0 & 0 & 0 & 0 & 1 & 1 & 1 &-1 & 1 & 1 & 1 & 0 & 0 & 1 & 1 & 0 & 1 & 1 \\
 1 & 1 & 0 & 0 & 0 & 0 & 1 & 1 & 0 & 1 & 1 & 1 & 0 & 0 & 1 & 1 & 1 & 1 & 1 & 0 & 1 & 0 & 0 & 1 & 1 \\
 1 & 1 & 1 & 0 & 0 & 1 & 1 & 1 & 1 & 1 & 0 & 1 & 1 & 1 & 1 & 1 &-1 & 0 & 0 & 1 & 1 & 1 & 1 & 1 & 0 \\
 1 & 0 & 1 & 1 & 1 & 0 & 1 & 0 & 1 & 0 & 0 & 1 & 0 & 0 & 1 & 0 & 0 & 0 & 1 & 1 & 0 & 0 & 1 & 1 & 0 \\
 1 & 1 & 1 & 0 & 0 & 0 & 0 & 0 & 1 & 1 & 0 & 0 & 0 & 0 & 0 & 0 & 0 & 1 & 0 & 0 & 0 & 1 & 0 & 0 & 0 \\
 1 & 0 & 0 & 0 & 1 & 0 & 0 & 0 & 1 & 0 & 0 & 1 & 1 & 1 & 0 & 0 & 0 & 0 & 0 & 1 & 1 & 1 & 1 & 1 & 1 \\
 1 & 1 & 0 & 0 & 0 & 0 & 1 & 1 & 0 & 0 & 1 & 1 & 0 & 1 & 1 & 1 & 1 & 1 & 0 & 1 & 1 & 0 & 1 & 0 & 0 \\
 0 & 1 & 0 & 1 & 0 & 1 & 1 & 1 & 1 & 1 & 0 & 0 & 1 & 0 & 1 & 1 & 1 & 0 & 1 & 1 & 1 & 1 & 0 & 0 & 0 \\
 1 & 1 & 1 & 0 & 1 & 1 & 0 & 1 & 1 & 0 & 1 & 1 & 0 & 0 & 1 & 0 & 0 & 1 & 0 & 0 & 0 & 0 & 1 & 1 & 1 \\
 1 & 1 & 1 & 0 & 1 & 0 & 0 & 0 & 1 & 1 & 1 & 0 & 0 & 0 & 1 & 1 & 0 & 0 & 1 & 1 & 0 & 0 & 1 &-1 & 0 \\
 0 & 1 & 1 & 1 & 0 & 0 & 0 & 0 & 1 & 1 & 1 & 0 & 0 & 1 & 0 & 0 & 1 & 1 & 0 & 1 & 0 & 0 & 0 & 0 & 0 \\
\end{array}\right)}
\\[0.2cm]
&& \hskip -0.3cm A_2 = {\scriptsize
\left(\begin{array}{rrrrrrrrrrrrrrrrrrrrrrrrr}
  -1 & 0 & 0 & 0 & 1 & 1 & 0 & 1 & 1 & 0 & 1 & 1 & 0 & 1 & 0 & 0 & 1 & 1 & 1 & 0 & 0 & 1 & 0 & 0 & 0 \\
   1 & 0 & 1 & 1 & 0 & 0 & 1 & 1 & 1 & 0 & 1 & 1 & 0 & 1 & 0 & 0 & 1 & 1 & 1 & 1 & 1 & 0 & 1 & 1 & 1 \\
   1 & 1 & 0 & 1 & 0 & 0 & 1 & 1 & 1 & 1 & 0 & 0 & 0 & 0 & 0 & 1 & 0 & 0 & 1 & 1 & 0 & 0 & 1 & 1 & 1 \\
   1 & 1 & 1 & 0 & 1 & 0 & 1 & 1 & 1 & 0 & 1 & 0 & 0 & 0 & 1 & 1 & 0 & 1 & 0 & 0 & 1 & 1 & 1 & 1 & 1 \\
   0 & 1 & 0 & 1 & 0 & 1 & 0 & 1 & 1 & 1 & 1 & 1 & 1 & 0 & 1 & 0 & 0 & 0 & 0 & 0 & 1 & 1 & 1 & 0 & 0 \\
   1 & 1 & 1 & 1 & 1 &-1 & 1 & 0 & 0 & 0 & 1 & 0 & 1 & 1 & 1 & 0 & 0 & 0 & 1 & 0 & 0 & 1 & 1 & 0 & 1 \\
   0 & 0 & 0 & 1 & 0 & 1 &-1 & 0 & 0 & 1 & 1 & 0 & 0 & 0 & 0 & 0 & 0 & 0 & 0 & 0 & 0 & 0 & 0 & 0 & 1 \\
   0 & 0 & 1 & 1 & 1 & 1 & 0 &-1 & 0 & 1 & 0 & 1 & 0 & 1 & 0 & 1 & 1 & 1 & 1 & 0 & 1 & 1 & 1 & 0 & 0 \\
   0 & 0 & 0 & 0 & 1 & 0 & 1 & 1 & 0 & 1 & 1 & 1 & 0 & 1 & 1 & 0 & 0 & 0 & 0 & 1 & 1 & 1 & 1 & 1 & 0 \\
   1 & 0 & 0 & 0 & 1 & 1 & 0 & 0 & 1 & 0 & 0 & 0 & 0 & 0 & 0 & 1 & 0 & 0 & 1 & 1 & 0 & 1 & 0 & 1 & 1 \\
   1 & 1 & 1 & 0 & 1 & 1 & 1 & 0 & 0 & 0 &-1 & 1 & 1 & 1 & 1 & 1 & 0 & 1 & 1 & 0 & 0 & 0 & 1 & 0 & 1 \\
   0 & 1 & 1 & 1 & 0 & 0 & 0 & 1 & 1 & 1 & 0 &-1 & 0 & 0 & 1 & 1 & 1 & 0 & 1 & 0 & 0 & 1 & 0 & 0 & 0 \\
   0 & 0 & 1 & 1 & 0 & 1 & 0 & 0 & 1 & 1 & 0 & 0 & 1 & 1 & 1 & 0 & 1 & 0 & 0 & 0 & 1 & 0 & 1 & 0 & 0 \\
   0 & 0 & 0 & 0 & 0 & 0 & 0 & 0 & 1 & 1 & 0 & 0 & 0 & 0 & 1 & 1 & 1 & 0 & 0 & 0 & 1 & 0 & 0 & 0 & 0 \\
   0 & 0 & 1 & 0 & 0 & 0 & 0 & 1 & 0 & 0 & 1 & 1 & 1 & 0 & 0 & 1 & 1 & 0 & 1 & 0 & 1 & 1 & 1 & 1 & 0 \\
   1 & 1 & 1 & 0 & 0 & 0 & 1 & 0 & 1 & 0 & 1 & 1 & 1 & 1 & 0 & 1 & 0 & 1 & 1 & 0 & 0 & 0 & 0 & 1 & 1 \\
   1 & 0 & 1 & 0 & 0 & 1 & 0 & 0 & 0 & 1 & 0 & 1 & 0 & 0 & 1 & 1 & 0 & 1 & 0 & 1 & 0 & 0 & 0 & 0 & 0 \\
   0 & 0 & 1 & 0 & 0 & 1 & 1 & 1 & 1 & 1 & 1 & 1 & 1 & 1 & 0 & 1 & 0 &-1 & 1 & 0 & 1 & 1 & 1 & 0 & 0 \\
   0 & 1 & 0 & 0 & 1 & 1 & 1 & 0 & 0 & 0 & 0 & 0 & 0 & 1 & 0 & 0 & 1 & 1 & 0 & 1 & 1 & 0 & 1 & 0 & 0 \\
   0 & 0 & 1 & 0 & 0 & 1 & 0 & 1 & 0 & 1 & 0 & 1 & 1 & 1 & 1 & 0 & 1 & 1 & 0 & 0 & 0 & 0 & 0 & 1 & 1 \\
   0 & 0 & 1 & 0 & 0 & 1 & 0 & 0 & 0 & 1 & 1 & 0 & 1 & 0 & 1 & 1 & 1 & 1 & 1 & 1 & 0 & 1 & 1 & 0 & 0 \\
   0 & 1 & 1 & 1 & 1 & 1 & 1 & 1 & 1 & 1 & 0 & 0 & 0 & 1 & 1 & 1 & 0 & 1 & 0 & 1 & 0 & 1 & 1 & 0 & 1 \\
   0 & 0 & 0 & 0 & 0 & 0 & 0 & 0 & 0 & 1 & 1 & 1 & 0 & 1 & 0 & 1 & 0 & 0 & 1 & 0 & 1 & 0 &-1 & 1 & 1 \\
   1 & 1 & 0 & 1 & 0 & 1 & 0 & 1 & 1 & 0 & 1 & 1 & 0 & 1 & 1 & 0 & 0 & 1 & 0 & 1 & 1 & 1 & 1 & 0 & 1 \\
   0 & 0 & 1 & 1 & 0 & 0 & 1 & 0 & 0 & 0 & 1 & 0 & 1 & 1 & 1 & 1 & 1 & 1 & 1 & 0 & 1 & 0 & 0 & 0 & 1 \\
\end{array}\right)}
\end{eqnarray*}

\subsubsection*{Joint Lyapunov exponent}

We consider a dwell time $\dt = 1/16, 1/32, 1/64$ and $1/128$ respectively. The obtained results are reported in Table
\ref{tab:6}.

\begin{table}[h]
\caption{Approximation of the Lyapunov exponent}
\label{tab:6}
\begin{center}
\begin{tabular}{|c||c||c||c||c|c|c|}
\hline
$\dt$  & $\beta$     & $\alpha$  & ${\gamma}$ & $\Pi$ & $\eps$ & $\# V$ \\[0.07cm]
\hline
 $1/16$ & $11.983$ & $12.541$    & $0.557$  & $B_1 B_2^2$ & $0.001$   & $30$  \\[0.07cm]
\hline
 $1/16$ & $11.983$ & $12.541$    & $0.557$  & $B_1 B_2^2$ & $0.0001$  & $30$  \\[0.07cm]
\hline
 $1/16$ & $11.983$ & $12.532$    & $0.549$  & $B_1 B_2^2$ & $0.00001$ & $164$  \\[0.07cm]
\hline
 $1/32$ & $11.985$ & $12.284$    & $0.300$  & $B_1 B_2^2$ & $0.001$ & $103$ \\[0.07cm]
\hline
 $1/32$ & $11.985$ & $12.274$    & $0.290$  & $B_1 B_2^2$ & $0.0001$ & $728$ \\[0.07cm]
%\hline
 %$1/32$ & $$ & $$ & $$ & $B_1 B_2^2$ & $0.00001$ & $50$ \\[0.07cm]
\hline
 $1/64$ & $11.985$ & $12.143$ & $0.158$ & $B_1 B_2^2$ & $0.001$ & $210$ \\[0.07cm]
\hline 
 $1/128$ & $11.985$ & $12.078$ & $0.093$ & $B_1 B_2$ & $0.0003$ & $1470$ \\[0.07cm]
\hline 
\end{tabular}
\end{center}
\end{table}

\subsubsection*{Lower Lyapunov exponent}

Similarly we compute bounds for the lower Lyapunov exponent using the same dwell times.
The results are reported in Table \ref{tab:6L}.

\begin{table}[h]
\caption{Approximation of the lower Lyapunov exponent}
\label{tab:6L}
\begin{center}
\begin{tabular}{|c||c||c||c||c|c|c|}
\hline
$\dt$  & $\check\beta$     & $\check\alpha$  & ${\check\gamma}$ & $\Pi$ & $\eps$ & $\# V$ \\[0.07cm]
\hline
 $1/16$ & $11.943$ & $11.453$    & $0.490$  & $B_1$ & $0.001$   & $28$  \\[0.07cm]
\hline
 $1/16$ & $11.943$ & $11.454$    & $0.489$  & $B_1$ & $0.0001$  & $33$  \\[0.07cm]
\hline
 $1/32$ & $11.943$ & $11.681$    & $0.262$  & $B_1$ & $0.001$ & $86$ \\[0.07cm]
% \hline
% $1/32$ & $11.943$ & $12.274$    & $0.290$  & $B_1$ & $0.0001$ & $728$ \\[0.07cm]
%\hline
 %$1/32$ & $$ & $$ & $$ & $B_1 B_2^2$ & $0.00001$ & $50$ \\[0.07cm]
\hline
 $1/64$ & $11.943$ & $11.777$ & $0.166$ & $B_1$ & $0.001$ & $187$ \\[0.07cm]
\hline 
 $1/128$ & $11.943$ & $11.838$ & $0.105$ & $B_1$ & $0.0003$ & $598$ \\[0.07cm]
\hline 
\end{tabular}
\end{center}
\end{table}

Note that in both cases we achieve a good approximation of the classical and lower Lyapunov
exponents (an interval of length about $0.1$) at a moderate cost (the $\eps$-extremal polytopes 
have a few hundreds of vertices).

\subsection*{Discussion}

We have proposed some examples to analyze how our methods work in different cases. 
Statistically, we can see that when $\dt$ is smaller, the applied algorithms take more time to compute the spectral maximizing or minimizing product. 
We note that we need more time to compute a spectrum product when his degree is higher. 
In addition, when $\dt$ decreases to $0$, number of vertices of the extremal polytope norm (or antinorm) increases, and the algorithm takes a longer time to compute it. 
However, thanks to the proposed approach, we are able to get an approximation for the joint Lyapunov exponent, and respectively for the lower Lyapunov exponent, the accuracy of which depends on $\dt$.
The smaller $\dt$ is, the closer the bounds are, so that the estimation is better. 
Finally, applying  the just introduced algorithms to the matrices associated with a switched system, we are able to say if the latter is stable (respectively stabilizable) or not.

\subsection{Numerical results and statistics}
\label{sec:stat}

We summarize here the numerical results obtained on a set of test problems.
We remark that we are able to deal with problems with dimension equal to some tens.
The computational complexity is that of repeatedly solving a sequence of LP problems.

\subsubsection{Computation of the Lyapunov exponent}

We consider Metzler matrices of different dimensions, $D=25, 50$ and $100$. Moreover we consider both the cases where
the matrices are sign-matrices (i.e. whose entries belong to $\{-1,0,1\}$ and also the case where the entries are
normally distributed in $[-1,1]$).

We used the following notation:
\begin{itemize}
\item[(ia) ]   $L_{\min}$ denotes the minimal length of s.m.p.; 
\item[(ib) ]   $L_{\max}$ denotes the maximal length of s.m.p.;
\item[(iia) ]  $\# V_{\min}$ indicates the minimal number of vertices of the computed polytope;
\item[(iib) ]  $\# V_{\max}$ indicates the maximal number of vertices of the computed polytope; 
\item[(iic) ]  $< \# V >$ indicates the average number of vertices of the computed polytope; 
\item[(iiia) ] $\gamma_{\min}$ indicates the minimal amplitude of the computed interval containing the Lyapunov exponent; 
\item[(iiib) ] $\gamma_{\max}$ indicates the maximal amplitude of the computed interval containing the Lyapunov exponent; 
\item[(iiic) ] $< \gamma >$ indicates the mean amplitude of the computed interval containing the Lyapunov exponent.
\end{itemize}

We first consider Metzler matrices of dimension $d=25$, with randomly chosen entries in $\{-1,0,1\}$.
We report the results obtained on a set of $10$ examples, making use of $\eps = \frac{1}{250}$ when $\dt= \frac{1}{4}$, 
$\eps = \frac{1}{500}$ when $\dt= \frac{1}{16}$ and $\eps = \frac{1}{1000}$ when $\dt= \frac{1}{64}$.
In Table \ref{tab:NE1} we report the obtained results.
Similarly, in Table \ref{tab:NE2} we consider Metzler matrices still of dimension $d=25$ with randomly selected real entries in
the interval $[-1,1]$.
We report the results obtained on $10$ examples with $\eps = \frac{1}{250}$ when $\dt= \frac{1}{4}$, $\eps = \frac{1}{500}$ when 
$\dt= \frac{1}{16}$, $\eps = \frac{1}{1000}$ when $\dt= \frac{1}{64}$ and $\eps = \frac{1}{2000}$ when $\dt= \frac{1}{128}$.

\begin{samepage}
\begin{table}[h]
\caption{Statistics on LE computation for Metzler problems of dimension $d=25$ with integer entries in $\{ -1, 0 , 1 \}$}
\label{tab:NE1}
\begin{center}
\begin{tabular}{|c|c|c|c|c|c|c|c|c|}
  \hline
  $\dt$ & $L_{\min}$ & $L_{\max}$ & $\# V_{\min}$ & $\# V_{\max}$ & $< \# V >$ & $\gamma_{\min}$ & $\gamma_{\max}$ & $< \gamma >$ \\
  \hline \hline
  1/4 & 1 & 3 & 2 & 4 & 3 & 0.801 & 1.435 & 1.2195 \\
  \hline
  1/16 & 1 & 11 & 9 & 26 & 17 & 0.253 & 0.555 & 0.4197 \\
  \hline
  1/64 & 1 & 15 & 58 & 172 & 106 & 0.098 & 0.162 & 0.1371 \\
  \hline
\end{tabular}
\end{center}
\end{table}

\begin{table}[h]
\caption{Statistics on LE computation for Metzler problems of dimension $d=25$ with entries in $[-1,1]$}
\label{tab:NE2}
\begin{center}
\begin{tabular}{|c|c|c|c|c|c|c|c|c|}
  \hline
  $\dt$ & $L_{\min}$ & $L_{\max}$ & $\# V_{\min}$ & $\# V_{\max}$ & $< \# V >$ & $\gamma_{\min}$ & $\gamma_{\max}$ & $< \gamma >$ \\
  \hline \hline
  1/4 & 1 & 3 & 2 & 4 & 3 & 0.339 & 0.918 & 0.5124 \\
  \hline
  1/16 & 1 & 8 & 9 & 15 & 12 & 0.116 & 0.307 & 0.1829 \\
  \hline
  1/64 & 1 & 19 & 16 & 82 & 48 & 0.077 & 0.115 & 0.0941 \\
  \hline
	1/128 & 1 & 11 & 32 & 194 & 109 & 0.064 & 0.092 & 0.0809 \\
  \hline
\end{tabular}
\end{center}
\end{table}
\end{samepage}

Next we consider Metzler matrices of dimension $d=50$ and entries randomly chosen in $\{ -1,0,1 \}$.
We have analyzed 10 examples with $\eps = \frac{1}{250}$ when $\dt= \frac{1}{4}$, $\eps = \frac{1}{500}$ when $\dt= \frac{1}{16}$, 
$\eps = \frac{1}{1000}$ when $\dt= \frac{1}{64}$ and $\eps = \frac{1}{2000}$ when $\dt= \frac{1}{128}$.
The results are reported in Table \ref{tab:NE3}.
Then we consider Metzler matrices of dimension $d=50$ and entries randomly chosen in $[-1,1]$.
The results are reported in Table \ref{tab:NE4}.

\begin{samepage}
\begin{table}[h]
\caption{Statistics on Metzler problems of dimension $d=50$ with entries in $\{-1,0,1\}$}
\label{tab:NE3}
\begin{center}
\begin{tabular}{|c|c|c|c|c|c|c|c|c|}
  \hline
  $\dt$ & $L_{\min}$ & $L_{\max}$ & $\# V_{\min}$ & $\# V_{\max}$ & $< \# V >$ & $\gamma_{\min}$ & $\gamma_{\max}$ & $< \gamma >$ \\
  \hline \hline
  1/4 & 1 & 2 & 2 & 2 & 2 & 1.312 & 2.229 & 1.6485 \\
  \hline
  1/16 & 1 & 7 & 5 & 10 & 8 & 0.595 & 1.1195 & 0.8891 \\
  \hline
  1/64 & 1 & 11 & 21 & 71 & 51 & 0.178 & 0.387 & 0.2958 \\
  \hline
\end{tabular}
\end{center}
\end{table}

\begin{table}[h]
\caption{Statistics on LE computation for Metzler problems of dimension $d=50$ with entries in $[-1,1]$}
\label{tab:NE4}

\begin{center}
\resizebox{1\textwidth}{!}{
\begin{tabular}{|c|c|c|c|c|c|c|c|c|}
  \hline
  $\dt$ & $L_{\min}$ & $L_{\max}$ & $\# V_{\min}$ & $\# V_{\max}$ & $< \# V >$ & $\gamma_{\min}$ & $\gamma_{\max}$ & $< \gamma >$ \\
  \hline \hline
  1/4   & 1 & 2  & 2  & 2  & 2  & 0.321  & 0.624 & 0.5115 \\
  \hline
  1/16  & 1 & 3  & 4  & 7  & 5  & 0.1798 & 0.334 & 0.2775 \\
  \hline
  1/64  & 1 & 15 & 11 & 28 & 22 & 0.094  & 0.126 & 0.1143 \\
  \hline
  1/128 & 1 & 15 & 23 & 74 & 55 & 0.079  & 0.097 & 0.0894 \\
  \hline
\end{tabular}}
\end{center}
\end{table}
\end{samepage}

Finally we consider Metzler matrices of dimension $d=100$ and entries randomly chosen in $\{ -1,0,1 \}$.
We have analyzed 10 examples with $\eps = \frac{1}{1000}$ when $\dt= \frac{1}{32}$, $\eps = \frac{1}{2000}$ when $\dt= \frac{1}{64}$, 
$\eps = \frac{1}{4000}$ when $\dt= \frac{1}{128}$ and $\eps = \frac{1}{8000}$ when $\dt= \frac{1}{256}$.
The results are reported in Table \ref{tab:NE5}.
To conclude we consider Metzler matrices of dimension $d=100$ and entries randomly chosen in $[-1,1]$.
The results are reported in Table \ref{tab:NE6}.

\begin{samepage}
\begin{table}[h]
\caption{Statistics on LE computation for Metzler problems of dimension $d=100$ with entries in $\{-1,0,1\}$}
\label{tab:NE5}
\begin{center}
\begin{tabular}{|c|c|c|c|c|c|c|c|c|}
  \hline
  $\dt$ & $L_{\min}$ & $L_{\max}$ & $\# V_{\min}$ & $\# V_{\max}$ & $< \# V >$ & $\gamma_{\min}$ & $\gamma_{\max}$ & $< \gamma >$ \\
  \hline \hline
  1/32  & 1 & 14  & 16  & 86   & 42    & 0.9640  &  1.729  & 1.3485 \\
  \hline
  1/64  & 1 & 15  & 34  & 290  & 181   & 0.4951  &  0.9505 & 0.6910 \\
  \hline
  1/128 & 1 & 31  & 101 & 771  & 451   & 0.2628  &  0.4606 & 0.3402 \\
  \hline
	1/256 & 1 & 44  & 288 & 1490 & 944   & 0.1366  &  0.2880 & 0.2125 \\
  \hline
\end{tabular}
\end{center}
\end{table}

\begin{table}[h]
\caption{Statistics on LE computation for Metzler problems of dimension $d=100$ with entries in $[-1,1]$}
\label{tab:NE6}
\begin{center}
\begin{tabular}{|c|c|c|c|c|c|c|c|c|}
  \hline
  $\dt$ & $L_{\min}$ & $L_{\max}$ & $\# V_{\min}$ & $\# V_{\max}$ & $< \# V >$ & $\gamma_{\min}$ & $\gamma_{\max}$ & $< \gamma >$ \\
  \hline \hline
  1/32  & 1 & 3  & 2   & 2   & 5    & 1.312  & 2.2290  & 1.6490 \\
  \hline
  1/64  & 1 & 6  & 5   & 14  & 18   & 0.595  & 1.1195  & 0.8890 \\
  \hline
  1/128 & 1 & 12 & 28  & 114  & 66  & 0.218  & 0.3927  & 0.2758 \\
  \hline
	1/256 & 1 & 23 & 120 & 370 & 258  & 0.0842 & 0.1730  & 0.1158 \\
  \hline
\end{tabular}
\end{center}
\end{table}
\end{samepage}

\subsubsection{Computation of the Lower Lyapunov exponent}

The parameters in the Tables are the same as those considered in the computation of the Lyapunov exponent.
The only difference is that $L_{\min}$ denotes here the minimal length of the candidate l.m.p. and$L_{\max}$ denotes the 
maximal length of the candidate l.m.p..

We first consider Metzler matrices of dimension $d=25$, with randomly chosen entries in $\{-1,0,1\}$.
We report the results obtained on a set of $10$ examples, making use of $\eps = \frac{1}{250}$ when $\dt= \frac{1}{4}$, 
$\eps = \frac{1}{500}$ when $\dt= \frac{1}{16}$ and $\eps = \frac{1}{1000}$ when $\dt= \frac{1}{64}$.
Similarly, in Table \ref{tab:LE2} we consider Metzler matrices still of dimension $d=25$ with randomly selected real entries in
the interval $[-1,1]$. Again we have analyzed $10$ examples with the same dwell times $\dt$ and $\varepsilon$.

\begin{samepage}
\begin{table}[h]
\caption{Statistics on LLE computation for Metzler problems of dimension $d=25$ with entries in $\{-1,0,1\}$}
\label{tab:LE1}
\begin{center}
\begin{tabular}{|c|c|c|c|c|c|c|c|c|}
  \hline
	$\dt$ & $L_{\min}$ & $L_{\max}$ & $\# V_{\min}$ & $\# V_{\max}$ & $< \# V >$ & $\gamma_{\min}$ & $\gamma_{\max}$ & $< \gamma >$ \\
  \hline \hline
  1/4  & 1 & 2  & 2  & 4   & 3   & 0.847 & 1.522 & 1.1767 \\
  \hline
  1/16 & 1 & 7  & 8  & 30  & 18  & 0.321 & 0.606 & 0.4514 \\
  \hline
  1/64 & 1 & 15 & 25 & 198 & 116 & 0.096 & 0.181 & 0.1409 \\
  \hline
\end{tabular}
\end{center}
\end{table}

\begin{table}[h]
\caption{Statistics on LLE computation for Metzler problems of dimension $d=25$ with entries in $[-1,1]$}
\label{tab:LE2}
\begin{center}
\begin{tabular}{|c|c|c|c|c|c|c|c|c|}
  \hline
  $\dt$ & $L_{\min}$ & $L_{\max}$ & $\# V_{\min}$ & $\# V_{\max}$ & $< \# V >$ & $\gamma_{\min}$ & $\gamma_{\max}$ & $< \gamma >$ \\
  \hline \hline
  1/4   & 1 & 3  & 2  & 4   & 3   & 0.285 & 0.824   & 0.53829 \\
  \hline
  1/16  & 1 & 5  & 7  & 18  & 11  & 0.131 & 0.2792  & 0.20368 \\
  \hline
  1/64  & 1 & 11 & 22 & 90  & 44  & 0.078 & 0.1210  & 0.0955 \\
  \hline
  1/128 & 1 & 11 & 45 & 215 & 102 & 0.066 & 0.0970  & 0.081 \\
  \hline
\end{tabular}
\end{center}
\end{table}
\end{samepage}

Next we consider Metzler matrices of dimension $d=50$ and entries randomly chosen in $\{ -1,0,1 \}$.
We have considered $10$ examples, using $\epsilon = \frac{1}{250}$ for $\dt= \frac{1}{4}$, $\epsilon = \frac{1}{500}$ for $\dt= \frac{1}{16}$, $\epsilon = \frac{1}{1000}$ for $\dt= \frac{1}{64}$ and $\epsilon = \frac{1}{2000}$ for $\dt= \frac{1}{128}$.
The results are reported in Table \ref{tab:LE3}.
Afterwards we considered Metzler pairs of dimension $50$, with random entries in $[-1,1]$ using the same dwell times
$\dt$ and $\varepsilon$ as in the previous experiments.
The results are showin in Table \ref{tab:LE4}. 

\begin{samepage}
\begin{table}[h]
\caption{Statistics on LLE computation for Metzler problems of dimension $d=50$ with entries in $\{-1,0,1\}$}
\label{tab:LE3}
\begin{center}
\begin{tabular}{|c|c|c|c|c|c|c|c|c|}
  \hline
  $\dt$ & $L_{\min}$ & $L_{\max}$ & $\# V_{\min}$ & $\# V_{\max}$ & $< \# V >$ & $\gamma_{\min}$ & $\gamma_{\max}$ & $< \gamma >$ \\
  \hline \hline
  1/4 & 1 & 2 & 2 & 2 & 2 & 1.293 & 2.217 & 1.63201 \\
  \hline
  1/16 & 1 & 4 & 4 & 9 & 7 & 0.728 & 1.346 & 0.93261 \\
  \hline
  1/64 & 1 & 10 & 26 & 69 & 51 & 0.234 & 0.375 & 0.30608 \\
  \hline
  1/128 & 1 & 13 & 72 & 264 & 171 & 0.186 & 0.359 & 0.25354 \\
  \hline
\end{tabular}
\end{center}
\end{table}

\begin{table}[h]
\caption{Statistics on LLE computation for Metzler problems of dimension $d=50$ with entries in $[-1,1]$}
\label{tab:LE4}
\begin{center}
\begin{tabular}{|c|c|c|c|c|c|c|c|c|}
  \hline
  $\dt$ & $L_{\min}$ & $L_{\max}$ & $\# V_{\min}$ & $\# V_{\max}$ & $< \# V >$ & $\gamma_{\min}$ & $\gamma_{\max}$ & $< \gamma >$ \\
  \hline \hline
  1/4 & 1 & 2 & 2 & 2 & 2 & 0.408 & 0.826 & 0.5343 \\
  \hline 
  1/16 & 1 & 4 & 5 & 7 & 6 & 0.231 & 0.348 & 0.28757 \\
  \hline
  1/64 & 1 & 8 & 18 & 29 & 23 & 0.084 & 0.132 & 0.1187 \\
  \hline
  1/128 & 1 & 15 & 36 & 68 & 55 & 0.077 & 0.115 & 0.0921 \\
  \hline
  \end{tabular}
\end{center}
\end{table}
\end{samepage}

Finally we consider Metzler matrices of dimension $d=100$ and entries randomly chosen in $[ -1,1 ]$.
We have analyzed 10 examples with $\eps = \frac{1}{1000}$ when $\dt= \frac{1}{32}$, $\eps = \frac{1}{2000}$ when $\dt= \frac{1}{64}$, 
$\eps = \frac{1}{4000}$ when $\dt= \frac{1}{128}$ and $\eps = \frac{1}{8000}$ when $\dt= \frac{1}{256}$.
The results are reported in Table \ref{tab:LE5}.

\begin{table}[h]
\caption{Statistics on LLE computation for Metzler problems of dimension $d=100$ with entries in $[-1,1]$}
\label{tab:LE5}
\begin{center}
\begin{tabular}{|c|c|c|c|c|c|c|c|c|}
  \hline
  $\dt$ & $L_{\min}$ & $L_{\max}$ & $\# V_{\min}$ & $\# V_{\max}$ & $< \# V >$ & $\gamma_{\min}$ & $\gamma_{\max}$ & $< \gamma >$ \\
  \hline \hline
  1/32  & 1 & 6   & 5    & 10    & 7    & 1.048  & 1.690  & 1.3412 \\
  \hline
  1/64  & 1 & 11  & 11   & 28    & 18   & 0.441  & 0.983  & 0.7733 \\
  \hline
  1/128 & 1 & 10  & 36   & 166   & 90   & 0.181  & 0.272  & 0.2240 \\
  \hline
	1/256 & 1 & 18  & 121  & 672   & 358  & 0.033 & 0.1230  & 0.0658 \\
  \hline 
  \end{tabular}
\end{center}
\end{table}

\subsection*{Conclusion}

We have illustrated a large variety of examples in order to support  the efficiency of our method. 
%It is better than CQLF to compute the Lyapunov exponent in high dimensions, where the other is unusable. 
For sufficiently small $\tau$ we have obtained quite good approximation results. Indeed, for sets of matrices with randomly chosen real entries in the interval [-1,1], for the smallest considered $\tau$, the mean amplitude of the computed interval containing the Lyapunov exponent is smaller than $0.1$ and the results are obtained in a reasonable short time.
On the other side, when we consider sets of matrices with randomly chosen entries in $\{-1,0,1\}$, the average amplitude of the computed interval is larger (it is usually smaller than $0.3$). Also in this case the results are obtained in a relatively short time (few minutes).

%%%%%%%%%%%%%%%%%%%%%%%%%%%%%%%%%%%%%%%%%%%%%%%%%%%%%%%%%%%%%%%%%%%%%%%%%%%%%%%%%%%%%%%%%%%%%%%%%%%%%%%%%%%%%%%%%%%%%%%%%%%%%%%%%%%%%%%%%

% \addtolength{\textheight}{-12cm}

% \appendix
\section*{Appendix}

{\tt Proof of Theorem~\ref{th10}} 

It is split into four steps.
First we construct a special positively-homogeneous monotone convex functional~$\varphi$
and prove (step 1) that it is actually a norm. Then, in step 2, we establish
the extremality property of $\varphi$, i.e., that it is non-increasing on each trajectory.
Thus, we have an extremal norm. In steps 3 and 4 we show the existence of a
 generalized trajectory on the unit sphere starting at an arbitrary point. This is done by considering a special
  convex optimal control problem and applying the Banach--Alaoglu compactness theorem.

In view of~(\ref{plus}) it suffices to consider the case $ \sigma (\cA) = 0$.
We take an arbitrary  norm monotone with respect to~$K$, for example,
$\|x\| = (b,x)$, where $b \in {\rm int}\, K^*$, where $K^*$ is a dual cone~(\ref{dual}). For $t \ge 0$ and $z \in K$, denote
$l(z, t) \, = \, \sup\, \{\|x(t)\|\, , \ A \in \cU\, [0,t]\, , \, x(0) = z\}$.
For every fixed $t$, the function $l(\cdot , t)$ is a seminorm on~$K$, i.e., it is positively homogeneous
and convex, as a supremum of homogeneous convex functions. Moreover,
Corollary~\ref{c10} implies that $l$ is non-decreasing  in~$z$, i.e.,
if $z_1 \ge_K z_2$, then $l(z_1, t) \ge l(z_2, t)$ for each~$t$. The
function $\varphi(x) = \sup\limits_{t \in \re_+} l(x, t)$ is, therefore,
also a monotone seminorm on~$K$ as the supremum of monotone seminorms.
Moreover, $\varphi(x) \ge l(x,0) = \|x\|$, hence $\varphi$ is positive.

{\it Step 1}. Let us show that $\varphi(x) < \infty$ for all~$x$, i.e., $\varphi$ is a norm on~$K$.
Denote by~$\cL$ the set of points $x \in K$ such that $\varphi (x) < \infty$.
Since $\varphi$ is convex, homogeneous, and monotone on~$K$, it follows that either $\cL = K$ or
$\cL$ is a face of~$K$. Writing $\tilde \cL$ for the linear span of~$\cL$,
we are going to show that $A \tilde \cL \subset \tilde \cL$ for each $A \in \cA$.
If this is not the case, then $A_0z \notin \tilde \cL$ for some $z \in \cL$ and $A_0 \in \cA$,
hence $(I + tA_0)z \notin \tilde \cL$ for all $t \in (0, \tau]$, and therefore $e^{\, t\, A_0} z \notin \tilde \cL$
for all $t \in (0, \tau]$, where $\tau > 0$ is small enough.
Hence, for the control function $A(t) \equiv A_0, \, \ t \in [0, \tau]$ and for $x(0) = z$ we have
$x(\tau) \notin \tilde \cL$, and consequently, $\varphi(z) \ge \varphi(x(\tau)) = +\infty$, which is a contradiction.
Thus, unless $\cL = K$, the set $\tilde \cL$ is a face plane invariant for all operators from~$\cA$. From the $K$-irreducibility
it follows that either $\cL = K$ (in which case the proof is completed) or $\cL = \{0\}$. It remains to show that the latter is impossible.

 If $\cL = \{0\}$, then $\varphi (z) = +\infty$ for all $z \in K \setminus \{0\}$.
Let $K_1 = \{z \in K, \, \|z\| = 1\}$.
For every natural $n$, denote by $\cH_{\, n}$ the set of points $z \in K_1$,
for which there exists a number $\tau = \tau(z) \le n$ and a trajectory starting at~$z$
such that $\|x(\tau)\| > 2$.
Since $\cH_{\, n}$ is open and $\cup_{n =1}^{\infty}\cH_{\, n} \, = \, K_1$, from the compactness
we conclude that $\cup_{n = 1}^{N}\cU_{\, n} \, = \, K_1$ for some natural~$N$. Thus, $\tau(z) \le N$
for all $z \in K_1$. Whence, starting from an arbitrary point $x_0 \in K_1$
one can consequently build a trajectory $x(t)$
and sequences $\{x_n\}, \{t_n\}$ an $\{\tau_n\}$
such that $t_0 = 0, t_n = \sum_{k=0}^{n-1}\tau(x_k)$, $x_k = x(t_k)$.
For this trajectory, $\|x(t_n)\| \, > \,  2^n$ and $t_n \le nN$, hence $\|x(t_n)\| \, > \,
e^{t_n\ln 2 / N}$. Therefore, $ \sigma (\cA ) \ge \frac{\ln 2}{N} > 0 $, which contradicts the assumption.
Thus,  the case $\cL = \{0\}$ is impossible, hence $\varphi$ is a norm.
\smallskip

{\it Step 2}. By definition, for any trajectory
$x(t)$ the function $\varphi(x(t))$ is non-increasing in~$t$. Indeed, suppose $t_1 < t_2$;
then $\varphi (x(t_1))$ is the supremum of $\|y(t)\|\, , \, t \in [t_1, +\infty)$ over all
possible trajectories $y(\cdot)$ on the half-line $[t_1, +\infty)$ with the initial condition~$y(t_1) = x(t_1)$.
This set of trajectories includes $x(\cdot)$. Hence this supremum is not smaller than $\varphi(t_2)$,
which is the supremum over a narrower set of trajectories $y(\cdot)$ on the half-line $[t_2, +\infty)$ with the initial condition~$y(t_2) = x(t_2)$.
\smallskip

 {\it Step 3}. Thus, we have found a norm $\|x\| = \varphi (x)$ which is
 non-increasing in~$t$ on every trajectory~$x(t)$. In this norm
the function $l(z,t)$ is non-increasing in $t$ for each~$z \in K$.
Hence, the limit $F(z) = \lim\limits_{t \to +\infty}l(z, t)$ exists for every~$z \in K$.
 Let us show that~$F$ is a norm we are looking for. First of all, this is a monotone seminorm on~$K$ as a limit of monotone seminorms. Second, $F(x(t))$ is non-increasing in $t$ on every trajectory~$x(t)$.
 Finally, by definition of~$l(z, t)$, we have
\begin{equation}\label{equiv}
\sup_{A(\cdot) \in \cU\, [0,\tau]\, , \, x_0 \, = \, z }\ F(x(\tau)) \quad =  \quad F(z)\ , \qquad t > 0\, .
\end{equation}
For every $\tau > 0$, we denote
by $\cQ_{\, \tau}$ the set of control functions $A(\cdot) \in \cU\, [0, \tau]$, for which the supremum in the left hand side
of~(\ref{equiv}) is attained. To show that~$\cQ_{\tau}$ is nonempty, we consider, for an arbitrary $a \in K^*$, the following optimal control problem:
\begin{equation}\label{optim1}
\left\{
\begin{array}{l}
\bigl(a , x(\tau) \bigr)\ \to \ \max \\
\dot x \, = \, A\, x\\
x(0)\, = \, z\\
A(t) \in {\rm co}\, (\cA)\, , \ t \in [0,\tau]
\end{array}
\right.
\end{equation}
Since this problem is linear in the control function~$A(\cdot)$, the set ${\rm co}\, (\cA)$
is convex and compact, and the objective function~$\bigl(a , x(\tau) \bigr)$ is linear, it
possesses the optimal solution $(\bar A, \bar x) \in L_1[0, \tau] \times W^1_1 [0, \tau]$~(see, for instance,~\cite{Fil,IT}).
Now we take a maximizing sequence~$\{x_i(\cdot)\}_{i = 1}^{\infty}$, for which $F(x_i(\tau)) \to F(z)$ in~(\ref{equiv})
as $i \to \infty$. By the compactness, without loss of generality it can be assumed that the sequence $x_i(z)$ converges to some point~$y \in K$ as $i \to \infty$. Taking $a \in \partial \, F(y)$ (the subdifferential of $F$ at the point~$y$),
and solving problem~(\ref{optim1}) for that~$a$, we obtain $\bigl(a , \bar x(\tau) \bigr) = F(z)$, and hence~$F(\bar x(\tau)) = F(z)$. Thus, $\cQ_{\, \tau}\, \ne \, \emptyset$ for each $\tau > 0$. Note that this set is compact in
the weak-* topology of the space~$L_1[0, \tau]$ due to Banach--Alaoglu theorem.  Furthermore, the family~$\{\cQ_{\, \tau}\}_{\tau > 0}$ is embedded:
$\cQ_{\, \tau_2} \subset \cQ_{\, \tau_1}$ if $\tau_2 > \tau_1$. Indeed, if $\bar x \in \cQ_{\, \tau_2}$,
then $F(\bar x(\tau_2)) = F(z)$, and hence $F(\bar x(t))$ equals identically to~$F(z)$ on the segment~$[0, \tau_2]$.
Therefore, it equals identically to~$F(z)$ on a smaller segment~$[0, \tau_1]$, and so
$\bar x \in \cQ_{\, \tau_1}$. Since an embedded system of nonempty compact sets has a nonempty intersection,
it follows that there exists  a control function $\bar A$, whose
trajectory $\bar x $ with $\bar x (0) = z$ possesses the property $F(\bar x (t)) \equiv F(\bar x (0))\, , \
t \in [0, +\infty )$.

{\em Step 4}. Thus, we have proved that the seminorm~$F$ is invariant: it is non-increasing in~$t$ on any trajectory
$x(t)$, and for every starting point there is a trajectory, on which $F$ is identically constant.
It remains to show that $F$ is a norm, i.e., $F(z)$ is finite and positive
for every $z \in K\setminus \{0\}$. Since $F(z)$ is defined as a limit of a non-increasing function
as $t \to +\infty$,
we have $F(z) < \infty$. The positivity is proved by contradiction. Let $\cM = \{z \in K, \ F(z) = 0\}$.
Since $F$ is a seminorm, it follows that $\cM$ is either entire $K$ or a face of~$K$.
If this is a face of $K$, then as in Step 1 we conclude that its linear span is a common invariant face
plane for~$\cA$,
 which by the irreducibility implies $\cM = \{0\}$, and the proof is completed.
 If $\cM = K$, then $l(z, t) \to 0$ as $t \to \infty$ for every $z \in K$.
 Take an arbitrary $x_0 \in {\rm int}\, K$. There exists a constant $c > 0$ such that
for every $x \in K$ inequality $\|x\| \le c$ implies $x \, \le_K \, \frac{1}{2}\, x_0$.
Let $n$ be such that $l(x_0, n) < c$. Hence, $x(n) \, \le_K \, \frac{1}{2}\, x(0)$
for every trajectory $x(t)$ with $x(0) = x_0$. Applying now Corollary~\ref{c10} and iterating $k$ times, we get
$x(kn) \, \le_K \, 2^{-k}\, x_0$.  Since the norm is monotone, it follows that
$\|x(kn)\| \, \le \, 2^{-k}\|x_0\|\, , \, k \in \n$. On the other hand, since the norm is non-increasing in~$t$
on every trajectory, we see that $\|x(t)\| \, \le \, 2^{- \bigl[\frac{t}{n} \bigr]}\|x_0\|$, where the
brackets denote the integer part. Since for every $y_0 \in K$ there is a constant $C$ such that $y_0 \le_K
Cx_0$, it follows that for every trajectory $y(t)$ one has
$\|y(t)\| \, \le \, C 2^{- \bigl[\frac{t}{n} \bigr]}\|y_0\|$. Therefore, $ \sigma(\cA) \le - \frac{\ln 2}{n} < 0$.
The contradiction concludes the proof.
\qed
\smallskip

{\tt Proof of Proposition~\ref{p40}.} 

Replacing the family~$\cA$ by~$\cA + hI$, where $h>0$ is large enough,
it may be assumed that~$A \ge_K I$ for all~$A \in \cA$. In this case the family~$\cB \, = \,
\{B = A - I \ | \ A \in \cA\}$ consists of  $K$-irreducible operators that leave~$K$ invariant.
Take arbitrary vectors~$x \in K \, , \, x^*\in K^* \, , \ \|x\| = \|x^*\| = 1$.
For every~$B \in \cB$, we denote~$p_B(x^*, x)\, = \, \max\limits_{n=0, \ldots , d-1}(x^*,B^nx)$.
If~$p_B(x^*, x) = 0$, then~$x$ is contained in a face of~$K$ invariant with respect to~$B$.
This contradicts irreducibility of~$B$. Thus, the function~$p$ is strictly positive. Hence, by the
compactness, there is $a > 0$ such that~$p_B(x^*, x) \ge a$ for all~$B \in \cB$ and
all~$x \in K , x^*\in K^* \, , \, \|x\| = \|x^*\| = 1$.
Assume for the moment that the family~$\cA$ is finite: $\cA = \{A_1, \ldots , A_m\}$
and $A_k = I +B_k$. Every product~$P = \prod_{k=1}^{md}A_k$ of $md$ operators
contains at least $d$ equal terms, say, $A_i$. Then
$P = \prod_{k=1}^{md}(I+B_k) \ge \sum_{n=0}^{d-1}B_i^n$, and therefore,
$(x^*, Px) \, \ge \, p_{B_i}(x^*, x)\, \ge \, a$. In case of general compact set~$\cA$
we take its $\eps$-net $\cA_{\eps} = \{A_i\}_{i=1}^{m(\eps)}$,
and to every $A \in \cA$ we associate the closets element from~$\cA_{\, \eps}$
(if there are several closest elements, we take any of them). There is a function
$c(\eps)$ such that $c(\eps) \to 0, \eps \to 0$, and for every
product~$\Pi$ of length at most~$d$ of operators from~$\cA$, we have $\|\Pi - \Pi'\|\le c(\eps)$,
where~$\Pi'$ is the corresponding product of operators from~$\cA_{\eps}$. Hence,
for every
product~$P$ of length~$d m(\eps)$, we have~$(x^*, Px) \, \ge \, a - c(\eps)$.
Taking~$\eps$ small enough, so that $c(\eps) < a/2$, we
   see that~$(x^*, Px) \, \ge \, a/2$,  for every
product~$P$ of length~$d m(\eps)$ of operators from~$\cA$. Since this holds for all~$x^* \in K^*\, , \, \|x^*\| = 1$,
it follows that~$Px \in {\rm int}(K)$ for every~$x \in K\, , \, \|x\| = 1$. Consequently, the cone~$PK$ is embedded in~$K$.
Let $K'$ be the convex hull
of all cones~$PK$ taken over all products~$P \in \cA^{\, d m(\eps)}$. The cone~$K'$ is embedded in~$K$
and~$AK' \subset K'\, , \  A \in \cA$, hence~$A$ is $K'$-Metzler, which completes the prof.
\qed
\smallskip

{\tt Proof of Theorem~\ref{th20}.} 

In view of~(\ref{plus}) it suffices to consider the case $\check \sigma (\cA) = 0$.  Take a  positive  antinorm on~$K$, for example,
$g(x) = (b,x)$, where $b \in {\rm int}\, K^*$. For $t \ge 0$ and $z \in K$, denote
$r(z, t) \, = \, \inf\, \{g(x(t))\, , \ A \in \cU\, [0,t]\, , \, x(0) = z\}$.

\smallskip

{\em Step 1.} For every fixed $t$, the function $r(\cdot , t)$ is an antinorm on~$K$.
Corollary~\ref{c10} implies  that $r$ is non-decreasing  in~$z$. The
function $\psi(x) = \inf\limits_{t \in \re_+} r(x, t)$ is, therefore,
also an antinorm on~$K$ as the infimum of antinorms.
This antinorm is non-decreasing in~$t$ for every trajectory $x(t)$
(the proof is the same as in Theorem~\ref{th10}). It remains to show that~$\psi$ is not identically zero. 
For an arbitrary $x_0 \in {\rm int}\, K$, there is a constant $c > 0$ such that
 inequality $g(x) \le c$ implies $x \, \le_K \, \frac{1}{2}\, x_0$.
If $\psi (x_0) = 0$, then there is $n > 0$ such that $r(x_0, n) < c$. Hence, there is a control function $\bar A (\cdot )$ on the segment $[0, n]$
and the corresponding
trajectory $\bar x$ with $\bar x(0) = x_0$ and
 $\bar x(n) \, \le_K \, \frac{1}{2}\, \bar x(0)$. If now $A(\cdot )$ is the periodic extension of the control function
$\bar A (\cdot)$ to $\re_+$ with period~$n$, then the corresponding trajectory $x(\cdot)$
satisfies $x(kn) \, \le_K \, 2^{-k}\, \bar x(0)$. Since the antinorm~$g$ is monotone on~$K$ (Lemma~\ref{l50}), we have
$g(x(kn)) \le 2^{-k} g(x_0)\, , \, k \in \n$.   Moreover, $g$ is positive, hence it is equivalent to any norm on~$K$.
Therefore,  $\|x(kn)\| \le C\, 2^{-k}\, , \, k \in \n$, and hence $\check \sigma \le - \frac{\ln 2}{n}$, which contradicts the assumption. Thus, $\psi$ is an extremal antinorm, which concludes the proof of the first part.
 \smallskip

{\em Step 2.} Now let us show that if all operators of~$\cA$ are~$K$-irreducible, then  there is a positive invariant antinorm.
 Take the extremal antinorm~$\psi$ on~$K$ constructed in the previous step and consider
 the function $f(z) = \lim\limits_{t \to +\infty}r(z, t)$, where we now denote $r(z, t) \, = \, \inf\, \{\psi(x(t))\, , \ A \in \cU\, [0, t]\, , \, x(0) = z\}$. Since $\psi(x(t))\, $ is non-decreasing in~$t$, so is
 $\, r(z, t)$. Hence, this limit exists for every~$z \in K$,
 maybe it becomes~$+\infty$. If~$f(x) = +\infty$ for some~$x \in K$, then $f$ is equal to $+\infty$ in the whole interior of~$K$, since for every $z \in {\rm int}(K)$ there is a constant~$c$ such that $cz >_K x$.

Now we invoke Corollary~\ref{c40}: there exists a cone~$K'$ embedded in~$K$ such  that every trajectory~$x(t)$ starting
in~$K'$ remains in~$K'$.  We see that if~$f(x) = +\infty$ for some~$x \in K$, then $f(x_0) = +\infty$
for every point~$x_0 \in K'\setminus \{0\}$. Take an arbitrary~$x_0 \in K'\setminus \{0\}$.
As we saw,~$f(x) = +\infty$ for some~$x \in K$, then $f(x_0) = +\infty$.
 Since $\psi$ is positive and continuous on~$K'$, there exists a constant $C > 0$ such that
for every $x \in K'$ inequality $\psi (x) \ge C$ implies $x \, \ge_K \, 2\, x_0$.
If $f(x_0) = +\infty$, then there is $q > 0$ such that $r(x_0, q) > C$. Hence, $x(q) \, \ge_K \, 2\, x(0)$
for every trajectory $x(t)$ with $x(0) = x_0$.
Applying Corollary~\ref{c10} and iterating $k$ times, we get
$x(kq) \, \ge_K \, 2^{\, k}\, x_0$. Since $\psi$ is monotone, it follows that
$\psi(x(kq)) \, \ge \, 2^{\, k}\psi (x_0)$. On the other hand, $\psi$ is non-decreasing in~$t$
on every trajectory, consequently $\psi (x(t)) \, \ge \, 2^{\bigl[\frac{t}{q} \bigr]}\psi (x_0)$.
Since $\psi$ is equivalent to any norm on~$K'$, it follows that $\check \sigma(\cA) \ge  \frac{\ln 2}{n} > 0$.
The  contradiction shows that $f(x)< +\infty$ for every~$x \in K$, i.e.,~$f$ is an antinorm on~$K$.

% Prove positivity of the antinorm!

Consider now the optimization problem~(\ref{optim1}), where the maximum is replaced by minimum.
It always has a solution $(\bar x, \bar A)\, \in \, W_1^1\times L_1[0, \tau]$ for which
$f(\bar x (\tau)) = F(z)$. Therefore, the set~$\cP_{\, \tau}$ of control functions~$A \in \cU\, [0, \tau]$
for which $f(x(t))$ is equal identically to $f(z)$ on the segment~$[0, \tau]$ is nonempty.
Since the sets $\{\cP_{\, \tau}\}_{\, \tau \in \re_+}$ form an embedded system of nonempty compact sets,
they have a common point~$\bar A (\cdot) \in \cU\, [0, +\infty )$, for which~$f(\bar x(t)) = f(z)\, , \
t \in [0, +\infty)$. Whence,~$f$ is an invariant antinorm. 
\qed

\section*{Acknowledgments}

Part of this work was developed during the stay of the third author at the University of L'Aquila
under the financial support of INdAM GNCS (Istituto Nazionale di Alta Matematica, Gruppo Nazionale
di Calcolo Scientifico) and GSSI (Gran Sasso Science Institute).

% \printindex


\begin{thebibliography}{AAA}
% \smallskip

\bibitem{AAA}
F.\,Amato, R.\,Ambrosino, M.\,Ariola,
\newblock Robust stability via polyhedral lyapunov functions,
\newblock 2009 American Control Conference
Hyatt Regency Riverfront, St. Louis, MO, USA
June 10-12, 2009
\smallskip

\bibitem{AR}
H.\,Alonso, P.\,Rocha,
\newblock {\em A general stability test for switched positive systems based on a multidimensional system analysis},
\newblock IEEE Trans. Automat. Control, 55 (2010), no 11, 2660--2664.
\smallskip

\bibitem{B}
N.E.\,Barabanov,
\newblock {\em Absolute characteristic exponent of a class of linear nonstationary systems of differential equations},
\newblock Siberian Math. J.,  29 (1988), 521--530.
\smallskip

\bibitem{B-discr}
N.E.\,Barabanov,
\newblock {\em Lyapunov indicator for discrete inclusions, I--III},
\newblock Autom. Remote Control, 49 (1988), no 2, 152--157.
\smallskip

\bibitem{B-poly}
N.E.\,Barabanov,
\newblock {\em Method for the computation of the Lyapunov exponent of a differential inclusion},
 \newblock Autom. Remote Control, 4 (1989), 53--58.
 \smallskip

%[9]
\bibitem{BM1}
F.\,Blanchini, S.\,Miani,
\newblock {\em Piecewise-linear functions in robust control},
\newblock Robust control via variable structure and Lyapunov techniques (Benevento, 1994),  213--243, Lecture Notes in Control and Inform. Sci., 217, Springer, London, 1996.
\smallskip

%[10]
\bibitem{BM2}
F.\,Blanchini, S.\,Miani,
\newblock {\em A new class of universal Lyapunov functions for the control of
uncertain linear systems},
\newblock IEEE Trans. Automat. Control,   44  (1999), no 3, 641--647.
\smallskip


\bibitem{BS}
F.\,Blanchini,  C.\,Savorgnanb,
\newblock {\em Stabilizability of switched linear systems does not imply the existence of convex Lyapunov functions},
\newblock Automatica, 44 (2008), no 4, 1166--1170.
\smallskip

%[13]
\bibitem{Bob}
O.N.\,Bobyleva,
\newblock {\em Piecewise-linear Lyapunov functions for linear stationary systems},
\newblock  Automation and remote control, 4 (2002), 26--35.
\smallskip

%[17]
%\bibitem{BT1}
%R.K.\,Brayton and C.H.\,Tong,
%\newblock {\em Stability of dynamical systems: A constructive approach},
%\newblock IEEE Transactions on Circuits and Systems, 26 (1979), 224--234.
%\smallskip

%[18]
\bibitem{BT2}
R.K.\,Brayton and C.H.\,Tong,
\newblock {\em Constructive stability and asymptotic stability of dynamical
systems},
\newblock IEEE Trans. on Circuits and Systems, 27 (1980), 1121--1130.
\smallskip

\bibitem{Chen}
L.\,Chen,
\newblock {\em New analysis of the sphere covering problems and optimal polytope approximation
of convex bodies},
\newblock J. Approx. Theory, 133 (2005), no 1, 134--145.
\smallskip

\bibitem{CGSZ}
A.~Cicone, N.~Guglielmi, S.~Serra-Capizzano, and M.~Zennaro,
\newblock {\em Finiteness property of pairs of $2\times 2$ sign-matrices via real extremal polytope norms,}
\newblock Linear Algebra Appl., 432 (2010), 796--816.
\smallskip

\bibitem{CicPro}
A.\,Cicone and V.Yu.\,Protasov.
\newblock {\em Fast computation of tight bounds for the joint spectral radius},
% \newblock {\em Preprint. Matlab code available on
% http://www.mathworks.com/matlabcentral/fileexchange/36460-joint-spectral-radius-computation},
2013.
\smallskip

\bibitem{DL}
I.\,Daubechies, J.C.\,Lagarias,
\newblock {\em Sets of matrices all infinite products of which converge.}
\newblock Linear Algebra Appl., 161 (1992),  227--263.
\smallskip


\bibitem{EMT}
R.\,Edwards, J.J.\,McDonald,  and
M.J.\, Tsatsomeros
\newblock {\em On matrices with common invariant cones
with applications in neural and gene networks},
\newblock Linear Alg. Appl.,  398 (2005),  37--67.
\smallskip

\bibitem{FM}
L.\,Fainshil and M.\,Margaliot,
 \newblock {\em A maximum principle for the stability analysis of positive bilinear control systems with applications to positive linear switched systems},
 \newblock SIAM J. Control Optim.  50  (2012),  no. 4, 2193--2215.
\smallskip


\bibitem{FMC}
L.\,Fainshil, M.\,Margaliot, and  P.\, Chigansky,
 \newblock {\em On the stability of positive linear switched systems under arbitrary switching laws},
 \newblock IEEE Trans. Automat. Control  54  (2009),  no. 4, 897--899.
\smallskip

\bibitem{Fil}
A.F.\,Filippov, 
\newblock {\em On certain questions in the theory of optimal control}, 
\newblock J. SIAM Control Ser. A,   1 (1962),  76--84.
\smallskip 


\bibitem{FV1}
E.\,Fornasini, M.E.\,Valcher,
\newblock {\em Asymptotic stability and stabilizability of special classes of discrete-time positive switched systems}
\newblock Linear Alg. Appl. 438 (2013), no 4, 1814-1831.
\smallskip

\bibitem{FV2}
E.\,Fornasini, M.E.\,Valcher,
\newblock {\em Stability and stabilizability criteria for discrete-time positive switched systems}
\newblock IEEE Trans. Automat. Control  57  (2012), no 5, 1208--1221.
\smallskip

\bibitem{G}
G.\,Gripenberg,
\newblock {\em  Computing the joint spectral radius},
\newblock Linear Algebra Appl., 234 (1996),   43--60.
\smallskip

\bibitem{GP}
N.\,Guglielmi, V.Yu.\,Protasov,
\newblock {\em Exact computation of joint spectral characteristics of matrices},
\newblock Found. Comput. Math.,
13(1) (2013),  37--97.
\smallskip

\bibitem{GWZ}
N.\,Guglielmi, F.\,Wirth, and M.\,Zennaro,
\newblock {\em Complex polytope extremality results for families of matrices},
\newblock SIAM J. Matrix Anal. Appl.,
27 (2005), no 3, 721--743.
\smallskip

\bibitem{GZ1}
N.~Guglielmi and M.~Zennaro,
\newblock {\em An algorithm for finding extremal polytope norms of matrix families,}
\newblock Linear Alg. Appl. 428 (2008), No 10, 2265--2282.
\smallskip

\bibitem{GZ2}
N.~Guglielmi and M.~Zennaro,
\newblock {\em Finding extremal complex polytope norms for families of real matrices,}
\newblock SIAM J. Matrix Anal. Appl., 31 (2009), No 2, 602--620.
\smallskip

\bibitem{GZ3}
N.~Guglielmi and M.~Zennaro,
\newblock {\em Stability of linear problems: joint spectral radius of sets of matrices,}
\newblock in L. Dieci and N. Guglielmi Eds., Current Challenges in Stability Issues for Numerical Differential Equations.
\newblock Lecture Notes in Mathematics, Springer, vol. 2082 (2014), 265--313, 2014.
\smallskip

\bibitem{Gurv}
L.\,Gurvits,
\newblock {\em Stability of discrete linear inclusions},
\newblock Linear Algebra Appl., 231 (1995),  47--85.
\smallskip


\bibitem{GSM}
L.\,Gurvits, R.\,Shorten,  O.\,Mason,
 \newblock {\em On the stability of switched positive linear systems},
\newblock  IEEE Trans. Automat. Control,   52  (2007), no 6,  1099--1103.
\smallskip

\bibitem{HJ}
R.A.\,Horn, C.R.\,Johnson,
\newblock {\em Matrix analysis},
\newblock  Cambridge University Press, 1990.
\smallskip


\bibitem{IT}
A.D.\,Ioffe and V.M.\,Tihomirov, 
\newblock {Theory of extremal problems}, 
\newblock Translated from the Russian by Karol Makowski. Studies in Mathematics and its Applications, 6. North-Holland Publishing Co., Amsterdam-New York, 1979.
\smallskip 

%[27]
\bibitem{JGD}
P.\,Julian, J.\, Guivant, and  A.\,Desages,
\newblock {\em  A parametrization of piecewise linear Lyapunov
functions via linear programming},
\newblock  Intern. Journal of Control 72,  (1999), no. 7--8,  702--715.
\smallskip

\bibitem{J}
R.\,M.~Jungers,
\newblock {\em The Joint Spectral Radius: Theory and Applications, }
\newblock Vol. 385 in Lecture Notes in Control and Information Sciences, 
Springer-Verlag. Berlin Heidelberg,  2009.
\smallskip

\bibitem{Koz}
 V.S.\,Kozyakin,
 \newblock  {\em Irreducibility measures in explicit estimation of the joint spectral radius},
 \newblock Diff. Equations and Dyn. Syst., 18 (2010), nos. 1-2, 91--103.
 \smallskip

 \bibitem{KozP}
 V.S.\,Kozyakin and A.\,V.\,Pokrovsky,
 \newblock  {\em The role of controllability-type
properties in the study of the stability of desynchronized dynamical systems},
 \newblock  Soviet Phys. Dokl., 37 (1992), no 5, 213--215.
 \smallskip

\bibitem{KR}
M.G.\,Krein and M.A.\,Rutman,
\newblock {\em Linear operators leaving invariant
a cone in a Banach space},
\newblock  Amer. Math. Soc. Transl. 26 (1950),
and Ser. 1,  10 (1962), 199--325 [translated from  Uspekhi Mat. Nauk
3 (1948), 3--95].
\smallskip

\bibitem{L}
D.\,Liberzon,
\newblock {\em Switching in systems and control},
\newblock Birkhauser, Boston, MA, Jun 2003.
\smallskip

\bibitem{LA}
H.\,Lin and P.J.\,Antsaklis,
\newblock {\em Stability and stabilizability of switched linear
systems: a survey of recent results},
\newblock IEEE Trans. Autom. Contr., 54 (2009), no 2, 308--322.
\smallskip

\bibitem{LSW}
 M.\,Ludwig, C.\,Sch\"utt, and E.\,Werner,
 \newblock {\em Approximation of the Euclidean ball by polytopes},
\newblock Studia Math., 173 (2006), no 1, 1--18.
\smallskip

\bibitem{Mar}
S.\,Marinosson,
\newblock {\em Lyapunov function construction for ordinary differential equations with linear programming},
\newblock Dyn. Syst.  17  (2002),  no. 2, 137--150.
\smallskip

%[36]
\bibitem{MP1}
A.P.\,Molchanov and E.S.\,Pyatnitskii,
\newblock {\em Lyapunov functions, defining necessary and sufficient
conditions for the absolute stability of nonlinear nonstationary control systems,}
\newblock {Avtomat. Telemekh., no. 3, 63--73; no. 4, 5-15; no. 5, 38--49 (1986)}.
\smallskip


%[37]
\bibitem{MP2}
A.P.\,Molchanov and E.S.\,Pyatnitskii,
\newblock {\em Criteria of asymptotic stability of differential and
difference inclusions encountered in control theory},
\newblock Systems and Control Letters 13 (1989), 59--64.
\smallskip

\bibitem{O}
V.I.\,Opoitsev,
\newblock {\em Equilibrium and stability in models of collective behaviour},
\newblock Nauka, Moscow (1977).
\smallskip

%%[42]
\bibitem{Pol1}
A.\,Polanski,
\newblock {\em Lyapunov function construction by linear programming},
\newblock  IEEE Transactions on Automatic Control 42 (1997), no 7, 1013--1016.
\smallskip

%[44]
\bibitem{Pol2}
A.\,Polanski,
\newblock {\em On absolute stability analysis by polyhedral Lyapunov functions},
\newblock  Automatica,  36 (2000), 573--578.
\smallskip

\bibitem{P0}
V.\,Yu.~Protasov,
\newblock {\em The joint spectral radius
and invariant sets of linear operators},
\newblock  Fundamentalnaya i
prikladnaya matematika, 2 (1996), No 1, 205--231.
\smallskip


\bibitem{P3}
V.Yu.\,Protasov,
\newblock {\em Asymptotic behaviour of the partition function},
\newblock Sb. Math., 191 (2000), no 3--4, 381--414
\smallskip

\bibitem{P4}
V.Yu.\,Protasov,
\newblock {\em Extremal $L_p$-norms of linear operators and self-similar
functions},
\newblock Linear Alg. Appl., 428 (2008), 2339--2356.
\smallskip


\bibitem{P1}
V.Yu.\,Protasov,
\newblock {\em Invariant functionals for random matrices},
\newblock Functional Anal. Appl.,
44 (2010), no 3,  230--233.
\smallskip

\bibitem{P2}
V.Yu.\,Protasov,
\newblock {\em When do several linear operators share an invariant cone~?},
\newblock Linear Alg. Appl., 433 (2010), 781--789.
\smallskip


\bibitem{PJ}
V.Yu.\,Protasov and  R.\,Jungers,    
\newblock {\em Is switching systems stability harder for continuous time systems?},
\newblock Proc. of 2013 IEEE 52nd Annual Conference on Decision and Control (CDC2013), 
Firenza (Italy), December 10-13, 2013.
\smallskip

%[45]
\bibitem{PR}
E.S.\,Pyatnitskiy and L.B.\,Rapoport,
\newblock {\em Criteria of asymptotic stability of differential
inclusions and periodic motions of time-varying nonlinear control systems. }
\newblock IEEE Transactions on
Circuits and Systems 43 (1996), no 3, 219--229.
\smallskip


\bibitem{RS}
G.C.\,Rota and G.\,Strang,
\newblock {\em A note on the joint spectral radius},
\newblock Kon. Nederl. Acad. Wet. Proc. Vol. 63 (1960),  379--381.
\smallskip

\bibitem{RSS}
L.\,Rodman, H.\,Seyalioglu, and I.\,Spitkovsky,
\newblock {\em When do several linear operators share an invariant cone~?},
\newblock Linear Alg. Appl., 432 (2010), 911--926.
\smallskip

\bibitem{SDP}
E.\,De Santis, M.D.\,Di Benedetto, G.\,Pola,
\newblock {\em Stabilizability of linear switching systems},
\newblock Nonlinear Analysis: Hybrid Systems, 2 (2008), no 3, 750--764.
\smallskip

\bibitem{SH}
J.\,Shen, J.\,Hu,
\newblock {\em Stability of discrete-time switched homogeneous systems on cones and conewise homogeneous inclusions},
\newblock SIAM J. Control Optim.,  \ 50  (2012), no 4, 2216--2253.
\smallskip

\bibitem{Van}
J.\,S.\,Vandergraft,
\newblock {\em Spectral properties of matrices which have invariant cones},
\newblock SIAM J. Appl. Math. 16 (1968), 1208--1222.
\smallskip

\bibitem{W}
 H.\,Wieland,
 \newblock {\em Unzerlegbare, nicht negative Matrizen},
 \newblock Math. Z. 52 (1950),  642--648,  and Malbematische
WerlcelMalbematical Works, vol. 2. 100--106 de Gruyter, Berlin, 1996.
\smallskip

\bibitem{XW}
G.\,Xie and L.\,Wang,
\newblock {\em Controllability and stabilizability of switched linear-systems},
\newblock Systems and Control Letters, 48 (2003), no 2, 135--155.
\smallskip

%[49]
\bibitem{Y}
C.A.\,Yfoulis,
\newblock {\em Stabilisation of nonlinear systems: the piecewise linear approach},
\newblock  Research Studies,
Press Limited, Hertfordshire, England., 2001.
\smallskip

\bibitem{YS}
C.A.\,Yfoulis, R.A.\,Shorten,
\newblock  {\em A numerical technique for the stability analysis of linear switched systems},
\newblock Internat. J. Control  77  (2004),  no. 11, 1019--1039.
\smallskip


\end{thebibliography}
\end{document}